\newcommand{\ord}{{\rm ord}}
\newcommand{\nc}{\newcommand}
\numberwithin{equation}{section}
\newtheorem{theorem}{Theorem}[section]
\newtheorem{prop}[theorem]{Proposition}
\newtheorem{importnota}[theorem]{Important Notation}
\newtheorem{prblm}[theorem]{Problem}
\newtheorem{notation}[theorem]{Notation}
\newtheorem{caution}[theorem]{Caution}
\newtheorem{remark}[theorem]{Remark}
\newtheorem{lemma}[theorem]{Lemma}
\newtheorem{construction}[theorem]{Construction}
\newtheorem{corollary}[theorem]{Corollary}
\newtheorem{example}[theorem]{Example}
\newtheorem{conclusion}[theorem]{Conclusion}
\newtheorem{triviality}[theorem]{Triviality}
\newtheorem{proto}[theorem]{Prototype Quasifibration}
\newtheorem{cauex}[theorem]{Cautionary Example}
\newtheorem{propositiondef}[theorem]{Proposition-Definition}
\newtheorem{subth}{Nuisance}[theorem]
\newtheorem{ssubth}{ }[subth]
\newtheorem{conjecture}[theorem]{Conjecture}
\newtheorem{sidest}[theorem]{Side Story}
\newtheorem{miniexample}[theorem]{Example}
\theoremstyle{definition}
\newtheorem{defin}[theorem]{Definition}
\nc\tri[1]{\begin{triviality}}
\nc\side[1]{\begin{sidest}}
\nc\conj[1]{\begin{conjecture}}
\nc\prodef[1]{\begin{propositiondef}}
\nc\prt[1]{\begin{proto}}
\nc\lem[1]{\begin{lemma}}
\nc\sblm[1]{\begin{sublemma}}
\nc\pro[1]{\begin{prop}}
\nc\thm[1]{\begin{theorem}}
\nc\cor[1]{\begin{corollary}}
\nc\dfn[1]{\begin{defin}}
\nc\sthm[1]{\begin{subth}}
\nc\exm[1]{\begin{example}}
\nc\miniexm[1]{\begin{miniexample}}
\nc\plm[1]{\begin{prblm}}
\nc\rmk[1]{\begin{remark}}
\nc\subrmk[1]{\begin{subremark}}
\nc\ntn[1]{\begin{notation}}
\nc\cau[1]{\begin{caution}}
\nc\imn[1]{\begin{importnota}}
\nc\cax[1]{\begin{cauex}}
\nc\con[1]{\begin{construction}}
\nc\ssthm[1]{\begin{ssubth}}
\nc\cnc[1]{\begin{conclusion}}
\nc\elem{\end{lemma}}
\nc\esblm{\end{sublemma}}
\nc\eside{\end{sidest}}
\nc\econj{\end{conjecture}}
\nc\eprodef{\end{propositiondef}}
\nc\eprt{\end{proto}}
\nc\ethm{\end{theorem}}
\nc\ecor{\end{corollary}}
\nc\edfn{\end{defin}}
\nc\esthm{\end{subth}}
\nc\epro{\end{prop}}
\nc\etri{\end{triviality}}
\nc\eexm{\end{example}}
\nc\eminiexm{\end{miniexample}}
\nc\ermk{\end{remark}}
\nc\subermk{\end{subremark}}
\nc\eplm{\end{prblm}}
\nc\ecau{\end{caution}}
\nc\ecax{\end{cauex}}
\nc\eimn{\end{importnota}}
\nc\entn{\end{notation}}
\nc\econ{\end{construction}}
\nc\ecnc{\end{conclusion}}
\nc\essthm{\end{ssubth}}
\newcommand{\C}{\mathbb{C}}
\newcommand{\R}{\mathbb{R}}
\newcommand{\Q}{\mathbb{Q}}
\newcommand{\Z}{\mathbb{Z}}
\newcommand{\A}{\mathbb{A}}
\newcommand{\diag}{{\rm diag}}
\renewcommand{\o}{\mathfrak{o}}
\newcommand{\ds}{\displaystyle}
\newcommand{\lra}{\longrightarrow}
\newcommand{\bs}{\backslash}
\newcommand{\sgn}{{\rm sgn}}
\renewcommand{\Bbb}{\mathbb}
\def\GL{{\mathop{\mathrm{GL}}}}
\def\SL{{\mathop{\mathrm{SL}}}}
\def\Sp{{\mathop{\mathrm{Sp}}}}
\def\Sym{{\mathop{\mathrm{Sym}}}}
\begin{document}

\thispagestyle{empty}

\title[Restriction of modular forms on $E_{7,3}$ to $Sp_6$]{Restriction of modular forms on $E_{7,3}$ to $Sp_6$}
\author{Henry H. Kim and Takuya Yamauchi}
\date{\today}
\thanks{The first author is partially supported by NSERC grant \#482564. 
}
\subjclass[2010]{Primary 11F46, 11F55, Secondary 11F70, 22E55}
\address{Henry H. Kim \\
Department of mathematics \\
 University of Toronto \\
Toronto, Ontario M5S 2E4, CANADA \\
and Korea Institute for Advanced Study, Seoul, KOREA}
\email{henrykim@math.toronto.edu}

\address{Takuya Yamauchi \\
Mathematical Inst. Tohoku Univ.\\
 6-3,Aoba, Aramaki, Aoba-Ku, Sendai 980-8578, JAPAN}
\email{takuya.yamauchi.c3@tohoku.ac.jp}

\keywords{Modular forms on exceptional groups, Siegel modular forms of degree 3, restriction technique}

\maketitle

\begin{abstract} 
In this paper, we study the restriction of modular forms such as Ikeda type lifts and the  
Eisenstein series on the exceptional group of type $E_{7,3}$ to the symplectic group $Sp_6$ (rank 3). 
As an application, we explicitly write down the restriction when modular forms 
have small weight. The restriction may contain Miyawaki lifts of type I,II (CAP forms) and 
genuine forms whose description is compatible with Arthur's classification.
\end{abstract}

\tableofcontents

\section{Introduction}
Let $\A$ be the ring of adeles of $\Q$. 
Let $G$ be a reductive group over $\Q$ and $H$ a subgroup of $G$ which is 
also defined over $\Q$. The restriction of an 
automorphic form on $G(\A)$ to $H(\A)$ has been an interesting object as in 
the Gan-Gross-Prasad conjecture.  The block restriction or diagonal restriction of 
Siegel modular forms has been a useful way to study the graded ring of Siegel modular 
forms or to construct Siegel modular forms from known forms (cf. \cite{CvdG}). 
The first important question is whether or not the restriction is non-vanishing. Then, if it is non-vanishing, we would be concerned with describing it in terms of automorphic forms on 
$H(\A)$. 

In this paper, we study the restriction of 
holomorphic modular forms on the exceptional group of type $E_{7,3}$ to 
the symplectic group $Sp_6$ of rank 3. 
It is known that $G^{\rm c}_2\times Sp_6\subset E_{7,3}$ is an exceptional dual pair 
where $G^{\rm c}_2$ is the anisotropic form of $G_2$ (cf. \cite{GS}). 
Thus, for any modular form $F$ on $E_{7,3}$ of level one, $F|_{Sp_6}$ can be regarded 
as the integral of the product of the constant function $1_{G^{\rm c}_2}$ on $G^{\rm c}_2$ and 
$F|_{G^c_2\times Sp_6}$ against 
$G^{\rm c}_2(\A)$. If $F$ is an Ikeda type lift constructed in \cite{KY}, 
any non-archimedean component of the cuspidal representation $\Pi_F$ attached to $F$ is 
a degenerate principal series for the Siegel parabolic subgroup $P$ inside $E_{7,3}$. 
Thus, the restriction $F|_{Sp_6}$ is similar to the setting of the exceptional theta 
correspondence (cf. \cite{MS, GS}) or Miyawaki-type construction \cite{Ik,KY1,Atobe}. 
However, the double coset space $P\bs G/(G^c_2\times Sp_6)$ may not be finite by \cite{D} 
and thus, we may not expect a usual doubling method to work in our setting. Further, for each prime 
$p$, $\Pi_{F,p}$ is far from the minimal representation, and hence we cannot use the techniques in \cite{MS, GS}. 
Instead, in this paper, we study the restriction by partially computing the Fourier 
coefficients $A_F(T)$ of $F$ when $F$ is an Ikeda type lift or an Eisenstein series 
for the indices $T$ with a particular shape. Reflecting the facts that $\Pi_{F,p}$ is not the minimal representation, and the double coset $P\bs G/(G^c_2\times Sp_6)$ has positive dimension, the restriction of each Ikeda type lift may contain various components including genuine forms. 
In particular, the restriction does not preserve Hecke eigenforms. This is indeed observed in Section 6. 

We organize our paper as follows. In Section 2, we reformulate the 
restriction problem in terms of the classical language, namely, we recall 
the definition of the modular forms on the exceptional domain and its 
restriction to the Siegel upper half-plane of degree 3. 
In Section 3, we compute the number of imaginary octonions which 
satisfy various properties. It is of independent interest. 
Several of these results will be used in Section 6. 
In Section 4, we recall an explicit form of the Fourier 
coefficients of Eisenstein series and Ikeda type lift on $\frak T$. 
In Section 5, we review Siegel modular forms of degree 3 and level one from Yuen-Poor-Shurman \cite{YPS, KPSY}, and 
in Section 6, we compute the restriction of the Ikeda type lift of weight 20 on $E_{7,3}$ to $Sp_6$.
We also compute the restriction of the Eisenstein series of low weights on $E_{7,3}$ to $Sp_6$.
In the Appendix A, we compute all 
$T\in \frak J_{>0}$ such that the restriction to half-integral matrices is $\diag(2,2,2)$. In the Appendix B, we show, by using Arthur's classification, that all 
Hecke eigen holomorphic Siegel modular forms of degree 3 with level 1 and the scalar weight $k\ge 4$ are either of Miyawaki lift of type I, II, or genuine forms. In particular, no endoscopic forms show up. 
In the course of the proof, we also describe cuspidal representations attached to 
Miyawaki lifts in terms of CAP representations. 

\textbf{Acknowledgments.} We would like to thank Cris Poor for helpful discussions and his encouragement. We thank the referees for many useful comments and corrections.

\section{Review of the exceptional domain}\label{excep}
We will freely use the notations from \cite[Section 2]{KY} (see also \cite{Ba, Kim}). 
Let $\frak C_{\Bbb Q}=\bigoplus_{i=0}^7 \Q e_i$ and $\mathfrak o\subset \frak C_{\Bbb Q}$ be the Cayley numbers (octonions) and integral Cayley numbers (integral octonions), resp. 
For any $\Q$-algebra $R$, an element $x\in \frak C_R:=\frak C_{\Bbb Q}\otimes_\Q R$ is said to be imaginary or imaginary octonion if the coefficient of $e_0$ in $x$ is zero.  
For each $x=\ds\sum_{i=0}^7 x_i e_i\in \frak C_R$, define the norm and the trace of $x$ by 
$$N(x):=\sum_{i=0}^7x^2_i,\ {\rm Tr}(x)=2x_0$$
respectively.
Let $\frak J_\Bbb Q$ be the exceptional Jordan algebra consisting of matrices 
\[X=(x_{ij})_{1 \le i,j \le 3}=
\begin{pmatrix}
      a  & x       & y \\
\bar x & b        & z \\ 
\bar y & \bar z & c 
\end{pmatrix}\]
with $a,b,c \in \Bbb Q$ and $x,y,z \in \frak C_{\Bbb Q}$. 
We define the Jordan algebra product $X\times X$ by 
\begin{equation}\label{JAP}
X\times X=
\begin{pmatrix}
      bc-N(z)  & y\bar{z}-cx       & xz-by \\
z\bar{y}-c\bar{x} & ac-N(y)        & \bar{x}{y}-az \\ 
\bar{z}\bar{x}-b\bar{y} & \bar{y}x-a\bar{z} & ab-N(x) 
\end{pmatrix}. 
\end{equation}
We define the determinant $\det X$ and the trace $\mathrm{Tr}(X)$ by
\[\det X=abc-aN(z)-bN(y)-cN(x)+\mathrm{Tr}((xz)\bar y), \quad  \mathrm{Tr}(X)=a+b+c.\]
We define a lattice $\frak J(\Bbb Z)$ of $\frak J_{\Bbb Q}$  by 
\[\frak J(\Bbb Z)=\{ X =(x_{ij}) \in \frak J_{\Bbb Q} \ | \ x_{ii} \in \Bbb Z \text{ and } x_{ij} \in \frak o \text{ for } i\not=j\}. \]
For a commutative algebra $R$, we put $\frak J(R)=\frak J(\Bbb Z) \otimes_{\Bbb Z} R$. 
Recall
$$\frak J(R)^{\rm ns}=\{ X \in \frak J(R) \ | \ \det (X) \not=0\},\quad R^+_3(R)=\{ X^2 \ | \ X \in \frak J(R)^{\rm ns}\}.
$$
We denote by $\overline{R^+_3(\Bbb R)}$ the closure of $R^+_3(\Bbb R)$ in $\frak J(\Bbb R) \simeq \Bbb R^{27}$. For a subring $A$ of $\Bbb R$, set
\[\frak J(A)_{>0}=\frak J(A) \cap R^+_3(\Bbb R) \text{ and } \frak J(A)_{\ge 0}=\frak J(A) \cap \overline{R^+_3(\Bbb R)}.\]
For a given $T=\begin{pmatrix}
      a  & x       & y \\
\bar x & b        & z \\ 
\bar y & \bar z & c 
\end{pmatrix}\in \frak J(A)$, $T\in \frak J(A)_{\ge 0 }$ if and only if 
$$a\ge 0,\ b\ge 0,\ c\ge 0, \ \det(T)\ge 0,\ ab-N(x)\ge 0,\ ac-N(y)\ge 0,\ bc-N(z)\ge 0.$$
Similarly, $T\in \frak J(A)_{>0 }$ if and only if 
$$a>0,\ b>0,\ c>0,\ \det(T)>0,\ ab-N(x)>0,\ ac-N(y)>0,\ bc-N(z)>0.$$

Then the exceptional domain is given by
$$\frak T:=\{Z=X+Y\sqrt{-1}\in \frak J_\Bbb C\ |\ X,Y\in \frak J_\Bbb R,\ Y\in R^+_3(\Bbb R)\}
$$
which is a complex analytic subspace of $\Bbb C^{27}$ and 
it is the Hermitian symmetric space for the exceptional group of type 
$E_{7,3}$. 

Let $\Bbb H_3$ be the Siegel upper half space of degree 3:
$$\Bbb H_3=\{ Z_1=X_1+Y_1\sqrt{-1}\in M_3(\Bbb C) : \text{$X_1, Y_1$ real,\, $Z_1={}^t Z_1$, \, $Y>0$}\},
$$
which is the Hermitian symmetric space for the symplectic group $Sp_6$ (of rank 3). 
It is known that $Sp_6$ can be regarded as a subgroup of $E_{7,3}$ and it yields a natural embedding from the Siegel upper half space into the exceptional domain: $\Bbb H_3\hookrightarrow \frak T,\ Z_1\mapsto Z_1$ so that 
the actions of $\Sp_6(\R)$ and $E_{7,3}(\R)$ on $\Bbb H_3$ and $\frak T$, respectively, are compatible with the embedding $Sp_6\subset E_{7,3}$. 

Each $Z\in\frak T$ can be written uniquely as $Z=Z_1+Z_2$, where $Z_1=
\begin{pmatrix}
\tau_1&z_1&z_2
\\ z_1&\tau_2&z_3
\\ z_2&z_3&\tau_3
\end{pmatrix}\in \Bbb H_3$, and
$Z_2=
\begin{pmatrix}
0&w_1&w_2\\ 
-w_1&0&w_3\\ 
-w_2&-w_3&0
\end{pmatrix}\in \frak J_\C$, where $w_1,w_2,w_3$ are imaginary octonions so that $\overline{w_i}=-w_i$. We call $Z_2$ the imaginary octonion part of $Z$. Similarly we can write 
$T\in\frak J(\Bbb Z)$ as $T=T_1+T_2$, where $T_1\in {\rm Sym}^3(\Bbb Z)_{\ge 0}$ is a semi-definite half-integral symmetric matrix and $T_2$ is the imaginary octonion part of $T_2$.

A modular form $F$ on $\frak T$ of weight $k$ is 
a holomorphic function on $\frak T$ which satisfies
$$F(\gamma Z)=F(Z) j(\gamma,Z)^k,\quad Z\in \frak T,\, \gamma\in\Gamma,\quad \gamma\in \Gamma=E_{7,3}(\Z),
$$
where $j(\gamma,Z)$ is the canonical factor of automorphy as in \cite[p. 234]{KY}. Such $F$ has a Fourier expansion of the form
\begin{equation*}
F(Z)=\sum_{T\in \frak J(\Z)_{\geq 0}} a(T) e^{2\pi i (T,Z)},
\end{equation*}
where $(X,Y)={\rm Tr}(X\circ Y)$, and $X\circ Y=\frac 12(X\cdot Y+Y\cdot X)$, and $X\cdot Y$ is the matrix multiplication.

Let $\bold M'(\Bbb Z)$ be the subgroup of $\Gamma$ as in \cite[p. 227]{KY}. 
It plays a role of $\begin{pmatrix} U&0\\0&{}^t U^{-1}\end{pmatrix}\in \Sp_{6}(\Bbb Z)$ with $U\in \GL_3(\Bbb Z)$. We say that $T,T'\in \frak J(\Bbb Z)_{\ge 0}$ are equivalent over $\Bbb Z$ if $T'=m(T)$ for some $m\in \bold M'(\Bbb Z)$.
If $F$ is a modular form with respect to $\Gamma$, since $F(m Z)=F(Z)$, $a(T')=a(T)$ if $T,T'$ are equivalent over $\Bbb Z$.

We say $T,T'\in \frak J(\Bbb Z)_{\ge 0}$ are equivalent over $\Bbb Z_p$ if $T'=m(T)$ for some $m\in \bold M'(\Bbb Z_p)$. By \cite[p. 521]{Ba}, any $T\in \frak J(\Bbb Z)_{\ge 0}$ is equivalent to a diagonal matrix over $\Bbb Z_p$.

Similarly, a modular form $F$ on $\Bbb H_3$ of weight $k$ is 
a holomorphic function on $\Bbb H_3$ which satisfies
$$F(\gamma Z)=\det(CZ+D)^k F(Z),\quad \gamma=\begin{pmatrix} A&B\\C&D\end{pmatrix}\in\Gamma_3=\Sp_6(\Z),
$$
Such $F$ has a Fourier expansion of the form
\begin{equation*}
F(Z)=\sum_{T\in \Sym^3(\Bbb Z)_{\geq 0}} a(T) e^{2\pi i{\rm Tr}(TZ)},
\end{equation*}
where $\Sym^3(\Bbb Z)_{\geq 0}$ is the set of half-integral $3\times 3$ symmetric matrices over $\Bbb Z$.

\begin{lemma} For a holomorphic modular form $F$ of weight $k$ on $\frak T$, we denote by $F|_{Sp_6}$, its restriction to $\Bbb H_3$. Then
 $F|_{Sp_6}$ is a holomorphic 
Siegel modular form on $\Bbb H_3$ of weight $k$, and it is of level 1 if $F$ is. 
Here the level 1 means that the forms have the levels with respect to 
$\Sp_6(\Z)$ and $E_{7,3}(\Z)$, respectively. 

If $F$ is a cusp form 
of level one, then 
$F|_{Sp_6}$ is a cusp form of level one on $\Bbb H_3$.
\end{lemma}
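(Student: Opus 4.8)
The plan is to verify, one at a time, the three defining properties of a level-one Siegel cusp form for $F|_{Sp_6}$; the real content sits in comparing the two automorphy factors and in the preservation of cuspidality.

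First I would treat holomorphy and the transformation law. As $\Bbb H_3\hookrightarrow\frak T$ is a closed complex-analytic embedding (the locus on which the imaginary octonion part of $Z$ vanishes), the restriction of the holomorphic function $F$ is holomorphic on $\Bbb H_3$. For the transformation law, fix $\gamma=\begin{pmatrix}A&B\\C&D\end{pmatrix}\in Sp_6(\Z)$. Because the actions of $Sp_6(\R)$ and $E_{7,3}(\R)$ are compatible with $Sp_6\subset E_{7,3}$, the element $\gamma$ preserves $\Bbb H_3\subset\frak T$, so $\gamma Z_1\in\Bbb H_3$ for $Z_1\in\Bbb H_3$; applying $F(\gamma Z)=F(Z)j(\gamma,Z)^k$ along $\Bbb H_3$ gives $F|_{Sp_6}(\gamma Z_1)=F|_{Sp_6}(Z_1)\,j(\gamma,Z_1)^k$. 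It then remains to identify $j(\gamma,Z_1)$, for $\gamma\in Sp_6$ and $Z_1\in\Bbb H_3$, with the symplectic factor $\det(CZ_1+D)$; this is exactly the assertion that weight $k$ restricts to weight $k$. Both $\frak T$ and $\Bbb H_3$ are tube domains of rank $3$ over formally real Jordan algebras, namely $\frak J(\R)$ and $\Sym^3(\R)$, whose generic norms are cubic, given by the Jordan determinant $\det$ in both cases; since $\det$ on $\frak J(\R)$ restricts to the matrix determinant on $\Sym^3(\R)$ and the two ranks coincide, an equivariant embedding pulls the canonical factor $j$ back to the first power of $\det(CZ_1+D)$. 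I would pin this down either from the explicit cocycle of \cite{KY} restricted to $Sp_6$, or from the normalizing property $\det(\mathrm{Im}\,\gamma Z)=\det(\mathrm{Im}\,Z)\,|j(\gamma,Z)|^{-2}$ together with $\det(\mathrm{Im}\,Z)|_{\Bbb H_3}=\det Y_1$. This identification is the main obstacle.

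Granting the transformation law, holomorphy at the cusps is free: $\Bbb H_3$ has degree $3\ge2$, so by the Koecher principle $F|_{Sp_6}$ is automatically a holomorphic Siegel modular form of weight $k$ with Fourier support on $\Sym^3(\Z)_{\ge0}$. For the level, the embedding $Sp_6\hookrightarrow E_{7,3}$ is defined over $\Z$, whence $Sp_6(\Z)\subseteq E_{7,3}(\Z)=\Gamma$; thus the transformation law above is valid for every $\gamma\in Sp_6(\Z)$, and $F|_{Sp_6}$ is of level one whenever $F$ is.

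Finally, cuspidality. I would invoke the standard criterion that a holomorphic form is a cusp form exactly when the $\Gamma$-invariant quantity $\det(\mathrm{Im}\,Z)^{k/2}\,|F(Z)|$ is bounded on $\frak T$, the exceptional analogue of the classical fact that $\det(Y)^{k/2}|F|$ is bounded for a Siegel cusp form. Since $\det(\mathrm{Im}\,Z)$ restricts to $\det Y_1$ along $\Bbb H_3$, boundedness of the exceptional invariant forces boundedness of $\det(Y_1)^{k/2}\,|F|_{Sp_6}(Z_1)|$, so $F|_{Sp_6}$ is a cusp form. A more hands-on alternative is to note that the $T_1$-th Fourier coefficient of $F|_{Sp_6}$ equals $\sum_{T_2}a(T_1+T_2)$, summed over imaginary octonion parts $T_2$ with $T_1+T_2\in\frak J(\Z)_{\ge0}$ (the imaginary part $T_2$ pairs to zero against the real $Z_1$); cuspidality of $F$ gives $a(T)=0$ unless $T\in\frak J(\Z)_{>0}$, so it suffices to show that $T_1+T_2\in\frak J(\R)_{>0}$ forces $T_1\in\Sym^3(\R)_{>0}$. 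This is the genuine point on the cuspidal side, and it holds because $\Sym^3(\R)$ is a unital Jordan subalgebra of the Euclidean Jordan algebra $\frak J(\R)$ and $T\mapsto T_1$ is precisely the orthogonal projection onto it; such a projection is a positivity-preserving conditional expectation fixing the identity, hence sends positive-definite elements to positive-definite elements.
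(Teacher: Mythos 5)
Your proposal is correct in substance, but it reaches both conclusions by routes that differ from the paper's, so a comparison is worth recording. For the transformation law, the paper does not argue abstractly with the Jordan-algebra structure of the two tube domains: it checks the cocycle identity only on generators, using that $\Gamma_3$ is generated by the translations $Z_1\mapsto Z_1+S_1$ and the inversion $Z_1\mapsto -Z_1^{-1}$, and that the latter is the restriction of Baily's involution $\iota$ with $j(\iota,Z)=\det(-Z)$; this is essentially your first option (``the explicit cocycle of \cite{KY} restricted to $Sp_6$'') and it is the cleaner one. Your second option, recovering $j(\gamma,Z_1)$ from $\det(\mathrm{Im}\,\gamma Z)=\det(\mathrm{Im}\,Z)\,|j(\gamma,Z)|^{-2}$, only pins down $j$ up to a unitary character of $\Sp_6(\Z)$, so you would additionally need that $\Sp_6(\Z)$ is perfect (true for rank $\ge 3$, but it must be said); as written that branch has a small gap. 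For cuspidality, the paper applies the Siegel $\Phi$-operator directly: $\Phi_{\Sp_6}(F|_{Sp_6})(Z_1^{(2\times 2)})=\lim_{\tau\to i\infty}F(\mathrm{diag}(Z_1^{(2\times2)},\tau))=\Phi(F)(Z_1^{(2\times2)})=0$, which is shorter than either of your arguments and avoids importing the boundedness criterion for the exceptional domain. Your Fourier-theoretic alternative is valid and arguably more informative: it needs (i) that cusp forms on $\frak T$ have Fourier support in $\frak J(\Z)_{>0}$ (used elsewhere in the paper, e.g.\ in the definition of the Ikeda lift) and (ii) that $T\in\frak J(\R)_{>0}$ forces $T_1\in\Sym^3(\R)_{>0}$. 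Point (ii) is genuinely needed and your justification via the orthogonal projection onto the unital Jordan subalgebra $\Sym^3(\R)\subset\frak J(\R)$ being a positive map (by self-duality of the cones) is correct, though one can also see it directly from the explicit positivity criteria in Section 2, since $N(x)\ge x_0^2$ handles the $2\times 2$ minors. What your route buys is the formula $A_{F|_{Sp_6}}(S)=\sum_{T_1=S}A_F(T)$ with vanishing off the definite $S$, which is exactly Lemma 6.1 and the engine of Section 6; what the paper's route buys is brevity and independence from the support statement.
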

\begin{proof} Since $Z_1\to -Z_1^{-1}$ and $Z_1\to Z_1+S_1$, $S_1\in \Sym^3(\Bbb Z)\geq 0$, generate $\Gamma_3$, it is enough to check $F|_{Sp_6}(-Z^{-1}_1)=\det(-Z_1)^k F|_{Sp_6}(Z_1)$ and $F|_{Sp_6}(Z_1+S_1)=F|_{Sp_6}(Z_1)$. 
Likewise, $\Gamma$ is generated by $\bold N(\Bbb Z)$ and $\iota$ \cite[Lemma 5.1]{KY}. By \cite[p. 527]{Ba}, $\iota Z=-Z^{-1}$ and $j(\iota,Z)=\det(-Z)$, and since $F(\iota Z)=\det(-Z)^k F(Z)$, we have $F(-Z_1^{-1})=\det(-Z_1)^k F(Z_1)$. 

Let $\Phi_{\Sp_6}$ be the Siegel phi-operator for $Sp_6$ (cf. \cite[Definition 4, 
p.192]{vdGeer}) and $\Phi$ the operator defined in \cite[p. 234]{KY}.  
Then, for any $Z^{(2\times 2)}_1\in \mathbb{H}_2$, 
$$\Phi_{\Sp_6}(F|_{Sp_6})(Z^{(2\times 2)}_1)=
\lim_{\tau\in \mathbb{H}_1\atop \tau\to \sqrt{-1}\infty}F(
\begin{pmatrix}
Z^{(2\times 2)}_1&0 \\
0 & \tau
\end{pmatrix})
=\Phi(F)(Z^{(2\times 2)}_1)=0
$$
since $\Phi(F)=0$ by cuspidality of $F$. 
Here, $Z^{(2\times 2)}_1$ is naturally regarded as an element of the 
exceptional domain $\frak T_2$ for $Spin(2,10)$ (see \cite[Section 2]{KY}). 
\end{proof}

 \section{Number of imaginary octonions of a given norm}
In this section, we introduce some important facts on integral octonions and imaginary octonions. 
However, they are easily deduced from the definition and therefore, we omit the details. 
 
Let $\mathfrak o$ be the set of integral octonions given by the following basis:
$$\alpha_0=e_0,\quad \alpha_1=e_1,\quad \alpha_2=e_2,\quad \alpha_3=-e_4,\quad \alpha_4=\frac 12(e_1+e_2+e_3+e_4),
$$
$$\alpha_5=\frac 12(-e_0-e_1-e_4+e_5),\quad \alpha_6=\frac 12(-e_0+e_1-e_2+e_6),\quad \alpha_7=\frac 12(-e_0+e_2+e_4+e_7).
$$

\begin{lemma}\label{oct} Integral octonions of norm 1 are exactly roots of the exceptional $E_8$ root system. Hence integral octonions coincide with elements of the root lattice of $E_8$.
\end{lemma}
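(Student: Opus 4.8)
The plan is to set aside the octonion multiplication entirely—only the rank-$8$ quadratic lattice $(\mathfrak o, N)$ is relevant to the statement—and to identify this lattice with a rescaled copy of $E_8$, after which both assertions reduce to statements about minimal vectors and lattice generation.

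First I would rescale the form, setting $q(x):=2N(x)$ on $\mathfrak o$, so that $q(\alpha_i)=2$ for each generator. Reading off the coordinate vectors of $\alpha_0,\dots,\alpha_7$ in the basis $e_0,\dots,e_7$, I would form the Gram matrix $G=(\langle\alpha_i,\alpha_j\rangle)$ for the inner product $\langle x,y\rangle=\sum_k x_k y_k$ and verify that $2G$ is an integral symmetric matrix with even diagonal and with $\det(2G)=2^8\det G=1$. This makes $(\mathfrak o,q)$ an integral lattice that is even, unimodular, and positive definite (the last being immediate from $q=2\sum_k x_k^2$). Granting this, the uniqueness of the positive definite even unimodular lattice of rank $8$ yields an isometry $(\mathfrak o,q)\cong E_8$; alternatively one may cite Coxeter's classical identification of the integral octonions with $E_8$.

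Under this isometry the norm-$1$ vectors of $(\mathfrak o,N)$ are exactly the $q$-norm-$2$ vectors, that is, the minimal vectors of $E_8$, and these are precisely the $240$ roots of the $E_8$ root system; this proves the first assertion. For the second, I observe that each $\alpha_i$ already satisfies $N(\alpha_i)=1$, so the norm-$1$ vectors generate $\mathfrak o$ over $\Z$; since they coincide with the $E_8$ roots and the roots generate the root lattice, $\mathfrak o$ is the $E_8$ root lattice.

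The only genuine computation is the $8\times 8$ determinant verifying unimodularity, together with checking that $2G$ is integral with even diagonal (equivalently, that $N$ is $\Z$-valued on $\mathfrak o$); everything else is formal. The main conceptual step—and the sole place a nontrivial external input enters—is invoking the uniqueness of $E_8$ among even unimodular rank-$8$ lattices (or Coxeter's explicit identification), after which the identification of norm-$1$ vectors with the roots is automatic.
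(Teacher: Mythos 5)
Your proposal is correct. The paper itself gives no proof of this lemma (Section~3 declares such facts ``easily deduced from the definition'' and omits details), so there is nothing to match step-for-step; but your argument is exactly the content of the paper's Remark following the lemma, which records that $\mathfrak o$ and $\frac{1}{\sqrt2}E_8$ are isometric. Your route --- rescale to $q=2N$, check that the Gram matrix of $\alpha_0,\dots,\alpha_7$ is integral, even, positive definite and of determinant one (using $\mathrm{vol}(\mathfrak C/\mathfrak o)=2^{-4}$, which the paper also quotes), and invoke the uniqueness of the rank-$8$ positive definite even unimodular lattice --- is the standard complete justification, and the final step (norm-$1$ vectors $=$ minimal vectors $=$ the $240$ roots, which generate the lattice since each $\alpha_i$ has $N(\alpha_i)=1$) is sound. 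The only alternative the paper hints at is citing Coxeter's explicit identification directly, which you also mention; either suffices.
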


\begin{remark} Note that the lattices $\frak o$ and $\frac 1{\sqrt{2}}E_8$ are isometric, and so the integral octonions of norm one correspond to the elements in the root system of norm two. Let $Q_8$ be the root lattice of $E_8$. Then the norms on $\frak o$ and on $Q_8$ are both denoted by $N$.
\end{remark}

Consider
$$\theta_{Q_8}(z)=\sum_{\alpha\in Q_8} q^{\frac 12 N(\alpha)}=1+\sum_{n=1}^\infty 
a_n(Q_8) q^n,
$$
where $q=e^{2\pi i z}$ with $z\in \mathbb{H}_1:=\{z\in \C\ |\ {\rm Im}(z)>0\}$. Given a positive integer $n$, let 
$$N_{{\rm oc}}(n)=\#\{ \text{$w$: integral octonion such that $N(w)=n$}\}.$$
Then $a_n(Q_8)=N_{{\rm oc}}(n)$. Since $\theta_{Q_8}$ is a modular form of weight 4 and dim$M_4(\SL_2(\Bbb Z))=1$, $\theta_{Q_8}$ is the Eisenstein series of weight 4 
with respect to $\SL_2(\Z)$, i.e.,
$$\theta_{Q_8}(z)=1+240\sum_{n=1}^\infty \sigma_3(n)q^n,\ \sigma_3(n)=\ds\sum_{d|n,\ d\ge 1}d^3. 
$$
Hence we have 
\begin{corollary} $N_{{\rm oc}}(n)=240\sigma_3(n)$.
\end{corollary}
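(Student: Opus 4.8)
The plan is to read off the claim by matching the two Fourier expansions of $\theta_{Q_8}$ that have already been assembled. First I would recall, via Lemma \ref{oct} together with the preceding Remark, that the integral octonions are precisely the points of the $E_8$ root lattice $Q_8$, and that under the isometry $\frak o\cong \frac{1}{\sqrt 2}E_8$ an integral octonion of octonion norm $N(w)=n$ corresponds to a lattice vector $\alpha\in Q_8$ with $\frac 12 N(\alpha)=n$. Consequently the coefficient $a_n(Q_8)$ --- the number of $\alpha\in Q_8$ contributing $q^n$ to $\theta_{Q_8}$ --- equals $N_{{\rm oc}}(n)$. This bookkeeping over the normalization of the two norms is the only point that needs genuine care, and it is exactly what the Remark supplies.

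Next I would invoke the modularity of the lattice theta series: since $Q_8$ is an even unimodular lattice of rank $8$, $\theta_{Q_8}$ is a modular form of weight $4$ for $\SL_2(\Bbb Z)$. Because $\dim M_4(\SL_2(\Bbb Z))=1$ and $\theta_{Q_8}$ has constant term $1$, it must coincide with the normalized Eisenstein series $E_4$ of weight $4$, whose $q$-expansion is $1+240\sum_{n\ge 1}\sigma_3(n)q^n$.

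Comparing the coefficient of $q^n$ in the two descriptions then gives $N_{{\rm oc}}(n)=a_n(Q_8)=240\,\sigma_3(n)$, which is the assertion. In effect there is no serious obstacle here: both halves of the chain, namely the identification $N_{{\rm oc}}(n)=a_n(Q_8)$ and the equality $\theta_{Q_8}=E_4$, are furnished by the discussion preceding the statement, so the corollary is the immediate arithmetic consequence of equating the two expansions.
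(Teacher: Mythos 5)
Your proposal is correct and follows exactly the paper's argument: identify $N_{{\rm oc}}(n)$ with the $n$-th coefficient of $\theta_{Q_8}$ via the isometry $\frak o\cong\frac{1}{\sqrt 2}E_8$, then use that $\theta_{Q_8}$ is a weight-$4$ level-one modular form with constant term $1$ and $\dim M_4(\SL_2(\Bbb Z))=1$ to conclude it equals the Eisenstein series $1+240\sum_{n\ge 1}\sigma_3(n)q^n$. No discrepancies to report.
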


Let $\frak o'$ be the set of all imaginary integral octonions.

\begin{lemma}\cite[p.241]{B} Integral imaginary octonions of norm 1 are exactly roots of the exceptional $E_7$ root system. Hence imaginary integral octonions coincide with elements of the root lattice of $E_7$. 
\end{lemma}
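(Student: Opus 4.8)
The plan is to deduce this from Lemma~\ref{oct} by realizing the imaginary octonions as the orthogonal complement of a fixed root inside the $E_8$ lattice, and then invoking the classical structure theory of $E_8$. First I would record the bilinear form attached to the norm: for $x=\sum_{i=0}^{7}x_ie_i$ and $y=\sum_{i=0}^{7}y_ie_i$ put $B(x,y)=\sum_{i=0}^{7}x_iy_i$, so that $N(x)=B(x,x)$. By Lemma~\ref{oct} and the following Remark, $(\mathfrak o,B)$ is isometric to the rescaled $E_8$ root lattice $Q_8$, the integral octonions of norm $1$ being precisely its roots. In this picture $\alpha_0=e_0$ is an integral octonion with $N(e_0)=1$, hence a root.

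Next I would identify the imaginary octonions intrinsically. An integral octonion $x=\sum x_ie_i$ is imaginary exactly when its $e_0$-coefficient vanishes, that is, when $B(x,e_0)=0$. Hence $\mathfrak o'=\mathfrak o\cap e_0^{\perp}$ is precisely the orthogonal complement of the root $e_0$ in the $E_8$ lattice, and the imaginary integral octonions of norm $1$ are exactly the roots of $E_8$ orthogonal to $e_0$.

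It then remains to show that the roots of $E_8$ orthogonal to a fixed root form an $E_7$ system and that the orthogonal-complement sublattice is the $E_7$ root lattice. Among the $240$ roots of $E_8$, the two roots $\pm e_0$ pair with $e_0$ to give $B=\pm 1$, the $112$ roots at angle $60^{\circ}$ or $120^{\circ}$ give $B=\pm\frac{1}{2}$, and the remaining $240-2-112=126$ lie in $e_0^{\perp}$; these $126$ roots form a root system of rank $7$. That it is of type $E_7$ follows either from the Borel--de Siebenthal procedure applied to the affine diagram $\widetilde{E_8}$, which exhibits the maximal-rank subsystem $A_1\times E_7$ as a root together with its orthogonal complement, or from a discriminant count: a primitive sublattice of the even unimodular lattice $E_8$ orthogonal to a norm-$2$ vector is even of rank $7$ and discriminant $2$, hence is $E_7$. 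Alternatively one simply cites \cite[p.241]{B}. Combining the three steps yields both assertions at once: the norm-$1$ elements of $\mathfrak o'$ are the $126$ roots of an $E_7$ system, and $\mathfrak o'$ itself is the $E_7$ root lattice.

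The only genuine obstacle, the result being classical, is this last step: ruling out that $e_0^{\perp}\cap\mathfrak o$ might be a different rank-$7$ configuration. This is precisely what the root count together with the lattice-discriminant characterization (or the appeal to $\widetilde{E_8}$) settles, and it is the one place where the argument uses more than the definitions.
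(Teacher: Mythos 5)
Your argument is correct, but note that the paper does not actually prove this lemma: it is stated with a citation to \cite[p.241]{B}, and the section's preamble explicitly says the proofs there are omitted as ``easily deduced from the definition.'' So there is no in-paper argument to compare against; what you have written is a self-contained derivation that the authors chose to outsource. Your route --- identify $\mathfrak o'$ as $e_0^{\perp}\cap\mathfrak o$ using $B(x,e_0)=x_0$, invoke Lemma~\ref{oct} to see $e_0$ as a root of $E_8$, count $240-2-112=126$ roots in $e_0^{\perp}$, and recognize $E_7$ --- is the standard one and is sound. Two small points deserve emphasis. First, the count of $112$ roots with $B(e_0,\beta)=\pm\tfrac12$ is itself a piece of classical $E_8$ structure theory (equivalently $2h-4=56$ for each sign, $h=30$), so you are not getting it for free from the definitions; alternatively, the $126$ could be read off from the theta-series computation $N_{\rm ioc}(1)=126$ that the paper performs two propositions later, though the paper's logic there presupposes this lemma. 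Second, and most importantly, you correctly isolate and close the one genuine gap: the second assertion requires that $\mathfrak o'$ \emph{equals} the lattice generated by its $126$ roots rather than merely containing it, and your discriminant argument (both lattices are rank $7$ of discriminant $2$, since $e_0$ has norm $2$ in the even unimodular rescaling) settles this; the root count alone, which forces the system to be of type $E_7$ because $126$ exceeds the root count of every other root system of rank at most $7$, would not suffice for that last step.
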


Next, we discuss imaginary integral octonions with a given norm.  
Let $Q_7$ be the root lattice of $E_7$, and consider
$$\theta_{Q_7}(z)=\sum_{\alpha\in Q_7} q^{\frac 12 N(\alpha)}=1+\sum_{n=1}^\infty 
a_n(Q_7) q^n,
$$
where $q=e^{2\pi i z}$. Given a positive integer $n$, let 
$$N_{{\rm ioc}}(n)=\#\{ \text{$w\in\frak o'$ : $N(w)=n$}\}.$$
Then $a_n(Q_7)=N_{{\rm ioc}}(n)$. Now $\theta_{Q_7}\in M_{\frac 72}(\Gamma_0(4))$.
Let us review relevant facts on $M_{\frac 72}(\Gamma_0(4))$ from \cite{C}: dim $M_{\frac 72}(\Gamma_0(4))=2$. A basis is given by $\{\theta^7, \theta^3 F_2\}$, where
\begin{eqnarray*}
&& \theta(z)=\sum_{n=-\infty}^\infty q^{n^2}=1+2q+2q^4+\cdots,\\
&& F_2(z)=\frac {\eta(4z)^8}{\eta(2z)^4}=\sum_{n\geq 1\atop \text{$n$ odd}} \sigma_1(n)q^n=q+4q^3+6q^5+\cdots
\end{eqnarray*}
where $\eta$ is the usual eta function. 
By comparing the first two coefficients, we have 
\begin{equation}\label{rel}
\theta_{Q_7}(z)=\theta(z)^7+112 \theta(z)^3 F_2(z).
\end{equation}

By computing the right hand side of (\ref{rel}), 
we have

\begin{prop} $N_{{\rm ioc}}(1)=126, N_{{\rm ioc}}(2)=756, N_{{\rm ioc}}(3)=2072, N_{{\rm ioc}}(4)=4158, N_{{\rm ioc}}(5)=7560$, and $N_{{\rm ioc}}(6)=11592$. 
\end{prop}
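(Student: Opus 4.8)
The plan is to read off the six required integers directly from the identity (\ref{rel}), since by construction $a_n(Q_7)=N_{{\rm ioc}}(n)$ and the right-hand side is an explicit polynomial expression in the two $q$-series $\theta$ and $F_2$. The whole argument is therefore a finite power-series computation carried out modulo $q^7$.

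First I would truncate the two building blocks. From $\theta(z)=\sum_{n\in\Z}q^{n^2}$ the next exponent after $q^4$ is $q^9$, so $\theta(z)\equiv 1+2q+2q^4 \pmod{q^7}$. From $F_2(z)=\sum_{n\ \mathrm{odd}}\sigma_1(n)q^n$, using $\sigma_1(1)=1$, $\sigma_1(3)=4$, $\sigma_1(5)=6$, one gets $F_2(z)\equiv q+4q^3+6q^5 \pmod{q^7}$.

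Next I would assemble the two summands of (\ref{rel}) modulo $q^7$. I would raise the trinomial $1+2q+2q^4$ to the seventh power to obtain $\theta^7$ through $q^6$; likewise I would cube it to obtain $\theta^3$, and then form $\theta^3F_2$ by multiplying this cube by $q+4q^3+6q^5$. Because $F_2$ has no constant term, only the coefficients of $\theta^3$ through $q^5$ actually enter. Finally, adding $\theta^7+112\,\theta^3F_2$ and extracting the coefficients of $q,q^2,\dots,q^6$ gives the six asserted values.

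There is no conceptual obstacle here: the only real work is careful bookkeeping in the polynomial multiplications, where an arithmetic slip is the main risk. A clean independent check is available at $n=1$: the lemma identifying norm-one imaginary integral octonions with the roots of $E_7$ forces $N_{{\rm ioc}}(1)=126$, the number of $E_7$ roots, and this matches the coefficient of $q$ on the right-hand side, namely $2\binom{7}{1}+112\cdot 1=14+112=126$.
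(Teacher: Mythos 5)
Your proposal is correct and coincides with the paper's own proof, which likewise obtains these values simply "by computing the right hand side of (\ref{rel})"; the truncation modulo $q^7$ and the sanity check $2\binom{7}{1}+112=126$ against the $E_7$ root count are exactly the right bookkeeping.
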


We also need the following:

\begin{prop}\label{1/2} $\#\{ \text{$x$ imaginary octonion} : \frac 12+x\in \frak o, \, N(\frac 12+x)=1\}=56$. 
\end{prop}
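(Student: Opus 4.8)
The plan is to recognize the set being counted as a single ``level set'' of the $E_8$ root system and to evaluate it using the inclusion $E_7\subset E_8$ already recorded above. Writing $w=\tfrac12+x$ with $x$ imaginary, the two conditions $\tfrac12+x\in\frak o$ and $N(\tfrac12+x)=1$ say precisely that $w$ is a norm-one integral octonion whose $e_0$-coefficient $w_0$ equals $\tfrac12$; conversely $x=w-\tfrac12 e_0$ is recovered from such a $w$, so the count equals $\#\{w\in\frak o: N(w)=1,\ w_0=\tfrac12\}$. By Lemma \ref{oct} the norm-one integral octonions are exactly the $240$ roots of $E_8$, so I am counting those roots with a prescribed value of the coordinate $w_0$. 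Introducing the bilinear form $\langle u,v\rangle=\sum_{i=0}^7 u_iv_i$ attached to $N$, one has $w_0=\langle w,e_0\rangle$, and $e_0=\alpha_0$ is itself a norm-one root.

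First I would pin down the possible values of $w_0$. Since $w$ and $e_0$ are both roots of the simply-laced system $E_8$ and $N(e_0)=1$, the crystallographic condition forces $2\langle w,e_0\rangle\in\{-2,-1,0,1,2\}$, while Cauchy--Schwarz gives $|\langle w,e_0\rangle|\le\sqrt{N(w)N(e_0)}=1$, with equality only when $w=\pm e_0$. Hence $w_0\in\{0,\pm\tfrac12,\pm1\}$, and the $240$ roots partition according to this value.

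Next I would evaluate the extreme cases directly. The value $w_0=\pm1$ occurs only for $w=\pm e_0$, contributing $2$ roots. The value $w_0=0$ means $w$ is an imaginary integral octonion of norm one; by the lemma identifying such octonions with the roots of $E_7$, these number $N_{\rm ioc}(1)=126$. Therefore the number of roots with $w_0=\pm\tfrac12$ equals $240-2-126=112$.

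Finally, the involution $w\mapsto-w$ preserves $\frak o$ and $N$ and sends $w_0$ to $-w_0$, so it restricts to a bijection between the roots with $w_0=\tfrac12$ and those with $w_0=-\tfrac12$; splitting the $112$ evenly yields $56$ roots with $w_0=\tfrac12$, which is the asserted count. The only delicate point is the first one --- ruling out any value of $w_0$ other than $0,\pm\tfrac12,\pm1$ --- but this is immediate from the $E_8$ root-system structure supplied by Lemma \ref{oct}, and everything after it is bookkeeping.
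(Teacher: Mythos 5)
Your argument is correct, and it is genuinely different from the one in the paper. The paper's proof is a direct enumeration: in Coxeter's explicit coordinates for the integral Cayley numbers, the elements $w=\tfrac12+x$ in question are listed as $l_1\pm l_i$, $l_2\pm l_i$ for $i=3,\dots,8$ (24 of them) together with $\tfrac12(l_1+l_2\pm l_3\pm\cdots\pm l_8)$ with an odd number of minus signs (32 of them), giving $24+32=56$. You instead compute the full distribution of the $e_0$-coordinate over the $240$ norm-one elements of $\frak o$: the evenness of the $E_8$ lattice gives $2w_0=\mathrm{Tr}(w)\in\Z$, the Cauchy--Schwarz bound (Lemma \ref{tr}, with the equality case of Corollary \ref{Trace}) gives $w_0\in\{0,\pm\tfrac12,\pm1\}$ with $w_0=\pm1$ only for $w=\pm e_0$, the $w_0=0$ stratum is the $126$ roots of $E_7$, and the involution $w\mapsto -w$ splits the remaining $240-2-126=112$ evenly, yielding $56$. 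Your route is self-contained within facts already proved in the section (the theta-series count $N_{\rm oc}(1)=240$, the identification $N_{\rm ioc}(1)=126$, and Lemma \ref{tr}), avoids any appeal to Coxeter's explicit model, and as a byproduct exhibits the decomposition $240=126+2+2\cdot 56$ of the $E_8$ roots under the $E_7$ sublattice, which is consistent with the branching of the adjoint representation of $E_8$ under $E_7\times \mathfrak{sl}_2$. What it costs is the explicit description of the $56$ elements, which the paper's enumeration provides and which could in principle be reused in the computations of Propositions \ref{1/2-1} and \ref{1/2-2}; for the purpose of the count alone, either proof suffices.
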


\begin{proof} In the notation of \cite[p.574]{Co}, $\frac 12+x$ should be of the form $l_1\pm l_i$ or $l_2\pm l_i$, $i=3,...,8$ (total 24), and $\frac 12(l_1+l_2\pm l_3\pm\cdots\pm l_8)$ (odd number of minus signs) (total 32). 
\end{proof}

For the proof of the following propositions, we use Corollary \ref{Trace} and count the number of triples $(x,y,z)\in (\frak o')^3$ such that $Tr((\bar xy)\bar z)=2$ in Proposition \ref{1/2-3}, for example, and 
use Mathematica (version 12.1).

\begin{prop}\label{1/2-1} The number of pairs $(x,y)$ of imaginary octonions such that $\frac 12+x, \frac 12+y\in \frak o$, and $(\frac 12+\bar x)(\frac 12+y)=\frac 12+z$, where $z$ is an imaginary octonion, and $N(\frac 12+x)=1, N(\frac 12+y)=1$, is 1512. 
\end{prop}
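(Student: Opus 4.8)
The plan is to rephrase the product condition as a condition on the standard inner product of octonions and then count by exploiting the geometry of the $56$ integral octonions of norm one and real part $\tfrac12$ supplied by Proposition~\ref{1/2}. First I would set $u=\tfrac12+x$ and $v=\tfrac12+y$; as $x,y$ are imaginary we have $\tfrac12+\bar x=\bar u$, so the requirement $(\tfrac12+\bar x)(\tfrac12+y)=\tfrac12+z$ with $z$ imaginary says precisely that the $e_0$-coefficient of $\bar u v$ is $\tfrac12$. A short coordinate computation gives that the $e_0$-coefficient of $\bar u v$ equals the Euclidean inner product $\langle u,v\rangle:=\sum_i u_iv_i$ (cf. Corollary~\ref{Trace}), and $\bar u v$ is automatically integral of norm $N(u)N(v)=1$ by multiplicativity of $N$ and closure of $\mathfrak o$ under conjugation and multiplication. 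Hence the problem reduces to counting ordered pairs $(u,v)$ of norm-one integral octonions with $e_0$-coefficient $\tfrac12$ and $\langle u,v\rangle=\tfrac12$.

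Let $S$ be the set of such $u$, so $\#S=56$ by Proposition~\ref{1/2}. Next I would pin down the possible off-diagonal inner products on $S$. Under the isometry $\mathfrak o\cong\tfrac1{\sqrt2}E_8$ of the Remark after Lemma~\ref{oct}, $S$ is identified with the $56$ roots of $E_8$ lying at inner product $1$ with the root attached to $e_0$ (the minuscule weights of the $E_7$ supported on $e_0^\perp$); as $E_8$ is an even lattice these inner products are integers, so back in $\mathfrak o$ every distinct pair satisfies $\langle u,v\rangle\in\{-\tfrac12,0,\tfrac12\}$, the extreme value $-1$ (which would force $v=-u\notin S$) being impossible. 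Writing $n_{1/2},n_0,n_{-1/2}$ for the number of $v\in S\setminus\{u\}$ attaining each value, the desired total is $\sum_{u\in S}n_{1/2}(u)$.

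The heart of the argument is the involution $\iota\colon v\mapsto e_0-v$ on $S$. A one-line check shows $e_0-v$ again has norm one, $e_0$-coefficient $\tfrac12$, and lies in $\mathfrak o$, so $\iota$ preserves $S$; it is fixed-point-free (a fixed point would be $\tfrac12 e_0$, of norm $\tfrac14$), hence partitions $S$ into $28$ pairs each summing to $e_0$, giving $\sum_{v\in S}v=28\,e_0$. Moreover $\langle u,v\rangle=-\tfrac12$ forces $N(u+v)=1$ while the $e_0$-coefficient of $u+v$ equals $1$, so $u+v=e_0$; thus $\iota(u)$ is the \emph{unique} $v\in S$ with $\langle u,v\rangle=-\tfrac12$, i.e. $n_{-1/2}(u)=1$ for all $u$.

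It then remains to compute the first moment. For any fixed $u\in S$,
\[
\sum_{v\in S}\langle u,v\rangle=\Big\langle u,\sum_{v\in S}v\Big\rangle=\langle u,28\,e_0\rangle=28\cdot\tfrac12=14,
\]
whereas isolating the term $v=u$ gives $\sum_{v\in S}\langle u,v\rangle=1+\tfrac12\,n_{1/2}(u)-\tfrac12\,n_{-1/2}(u)$. Substituting $n_{-1/2}(u)=1$ yields $n_{1/2}(u)=27$ for every $u\in S$, so the number of admissible ordered pairs is $56\cdot27=1512$. The only place demanding care is the normalization bookkeeping between $\mathfrak o$ and $E_8$ that fixes the value set $\{-\tfrac12,0,\tfrac12\}$ and the exclusion of $\langle u,v\rangle=-1$; once these are settled the involution and the single moment identity finish the count, and the Mathematica enumeration used for the neighboring propositions provides an independent numerical confirmation.
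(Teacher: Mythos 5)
Your argument is correct, but it takes a genuinely different route from the paper: the paper establishes Propositions \ref{1/2-1}--\ref{1/2-3} by direct enumeration in Mathematica (Corollary \ref{Trace} is only used to set up the trace/inner-product condition), whereas you give a closed-form lattice-theoretic derivation. Your reduction is right: since $x$ is imaginary, $\tfrac12+\bar x=\bar u$ with $u=\tfrac12+x$, and the $e_0$-coefficient of $\bar u v$ is $u_0v_0+\sum_{i\ge1}u_iv_i=\langle u,v\rangle$ (because $e_i^2=-e_0$ and $e_ie_j$ is an imaginary unit for distinct $i,j\ge1$), so the condition is exactly $\langle u,v\rangle=\tfrac12$ on the $56$-element set $S$ of Proposition \ref{1/2}. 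The identification of $S$ with the $56$ roots of $E_8$ at inner product $1$ with the root corresponding to $e_0$ (the Remark after Lemma \ref{oct}) correctly pins the off-diagonal values to $\{-\tfrac12,0,\tfrac12\}$, the involution $v\mapsto e_0-v$ is indeed a fixed-point-free bijection of $S$ giving $\sum_{v\in S}v=28e_0$ and $n_{-1/2}(u)=1$, and the first-moment identity $14=1+\tfrac12 n_{1/2}(u)-\tfrac12$ yields $n_{1/2}(u)=27$ for every $u$, hence $56\cdot27=1512$. What your approach buys is a hand-checkable proof that explains the number and generalizes (a similar root-system count gives Proposition \ref{1/2-3} as $126\cdot 60=7560$, the $60$ being the roots of the $D_6$ orthogonal to a fixed $E_7$-root); what the paper's computation buys is uniformity, since the same enumeration handles all three propositions and the far larger counts in Appendix A, where no comparably clean structure is available.
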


\begin{prop}\label{1/2-2} The number of pairs $(x,y)$ of imaginary octonions such that $\frac 12+x, y\in \frak o$, and $(\frac 12+\bar x)y$ is an  imaginary octonion, and $N(\frac 12+x)=1, N(y)=1$, is 4032. 
\end{prop}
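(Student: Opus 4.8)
The plan is to rewrite the product condition as an orthogonality condition and then count inside the $E_8$ root system. For octonions $a,b$ one has $\langle a,b\rangle=\mathrm{Re}(\bar a b)$, where $\langle\,,\,\rangle$ is the symmetric bilinear form associated to $N$ and $\mathrm{Re}(w)=\frac12\mathrm{Tr}(w)$. Since $\frak o$ is closed under multiplication and conjugation, $(\frac12+\bar x)y\in\frak o$ automatically, so beyond the two norm conditions the only constraint is that its real part vanish. Using $\mathrm{Re}(y)=0$ I compute $\mathrm{Re}\big((\frac12+\bar x)y\big)=\mathrm{Re}(\bar x y)=\langle x,y\rangle$, and since $\langle e_0,y\rangle=0$ this equals $\langle \frac12+x,\,y\rangle$. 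Hence the pairs to be counted are exactly those for which $u:=\frac12+x$ is a unit integral octonion with $\mathrm{Re}(u)=\frac12$, $y$ is a unit imaginary integral octonion, and $\langle u,y\rangle=0$.

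Next I pass to the $E_8$ root lattice. Under the isometry $\frak o\simeq\frac1{\sqrt2}E_8$ of Lemma~\ref{oct} the $240$ unit octonions become the roots of $E_8$, the identity $e_0$ becomes a fixed root $\tilde e_0$, and the $E_8$ form is twice the octonion form; thus ``$\mathrm{Re}(u)=\frac12$'' becomes ``$\langle\beta,\tilde e_0\rangle=1$'' while orthogonality is preserved. By the lemma identifying imaginary unit octonions with the $E_7$ roots, the $126$ admissible $y$ become the roots $\gamma$ with $\langle\gamma,\tilde e_0\rangle=0$, and the $56$ admissible $u$ of Proposition~\ref{1/2} become the roots $\beta$ with $\langle\beta,\tilde e_0\rangle=1$. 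The problem becomes: count ordered pairs of $E_8$ roots $(\beta,\gamma)$ with $\langle\gamma,\tilde e_0\rangle=0$, $\langle\beta,\tilde e_0\rangle=1$, and $\langle\beta,\gamma\rangle=0$.

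To finish I fix $\gamma$ and count the $\beta$ orthogonal to $\gamma$ with $\langle\beta,\tilde e_0\rangle=1$. The roots orthogonal to $\gamma$ form an $E_7$ that contains $\tilde e_0$; inside this $E_7$ the fixed root $\tilde e_0$ has $60$ orthogonal roots (a $D_6$) together with $\pm\tilde e_0$, so the remaining $126-60-2=64$ split under $\beta\mapsto-\beta$ into $32$ with $\langle\beta,\tilde e_0\rangle=1$ and $32$ with $\langle\beta,\tilde e_0\rangle=-1$. Each of the $126$ roots $\gamma$ therefore yields $32$ roots $\beta$, for a total of $126\cdot 32=4032$. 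As an independent check I fix $\beta$ instead: then $\tilde e_0$ and $\beta$ span an $A_2$ whose orthogonal complement in $E_8$ is an $E_6$ with $72$ roots, giving $56\cdot 72=4032$.

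The real work is bookkeeping rather than ideas: one must track the factor $2$ between the octonion and $E_8$ forms, so that ``real part $\frac12$'' corresponds to inner product $1$ with $\tilde e_0$ while orthogonality is unchanged, and one must read off correctly the sub-root-system chains $E_8\supset A_1\times E_7\supset A_1\times A_1\times D_6$ and $E_8\supset A_2\times E_6$, together with the equal $\pm1$ split of inner products inside $E_7$. These are exactly the facts that the authors' explicit enumeration over the coordinates of \cite{Co} confirms by machine, so the two computations corroborate each other.
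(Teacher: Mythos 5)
Your argument is correct, and it is genuinely different from what the paper does: the authors prove this proposition (and its companions \ref{1/2-1}, \ref{1/2-3}) by brute-force enumeration in Mathematica, using Corollary \ref{Trace} only as a device to encode the condition ``$\bar x y\in\frak o'$'' in a machine-checkable form. You instead observe that, since $\frak o$ is closed under multiplication and conjugation, the only nontrivial constraint is $\mathrm{Re}\big((\tfrac12+\bar x)y\big)=0$, which the identity $\mathrm{Re}(\bar u y)=\langle u,y\rangle$ converts into orthogonality of $u=\tfrac12+x$ and $y$; the count then reduces to a configuration count of $E_8$ roots along the standard chains $E_8\supset A_1\times E_7\supset A_1\times A_1\times D_6$ (giving $126\cdot 32$) and $E_8\supset A_2\times E_6$ (giving $56\cdot 72$), both yielding $4032$. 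The arithmetic checks out ($126-2-60=64$, split $32/32$ by $\beta\mapsto-\beta$; $E_6$ has $72$ roots), and the two factorizations corroborate each other and agree with the machine count. What your approach buys is a human-verifiable, conceptual proof that also explains the number $4032$; what it costs is reliance on standard but uncited facts about orthogonal sub-root-systems of $E_7$ and $E_8$, which in a final write-up you should reference (e.g.\ Bourbaki or \cite{Co}) rather than assert. The same orthogonality reformulation would also streamline Propositions \ref{1/2-1} and \ref{1/2-3}, which the paper likewise handles only by computer.
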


\begin{prop}\label{1/2-3} The number of pairs $(x,y)$ such that $x, y\in \frak o'$ and $\bar x y\in \frak o'$, and $N(x)=N(y)=1$, is 7560. 
\end{prop}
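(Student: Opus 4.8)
The plan is to reduce the count to a purely root-theoretic one in the $E_7$ root system. First I would record that, by the lemma identifying norm-one imaginary integral octonions with the roots of $E_7$, the set $\{w\in\frak o':N(w)=1\}$ has exactly $N_{\rm ioc}(1)=126$ elements. The key simplification is that the condition $\bar x y\in\frak o'$ splits into an integrality part and an imaginary part, and the integrality part is automatic: since $\frak o$ is a ring closed under conjugation, $x,y\in\frak o$ already forces $\bar x y\in\frak o$. Thus the only genuine constraint is that $\bar x y$ be imaginary, i.e. ${\rm Tr}(\bar x y)=0$.

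Next I would translate this trace condition into orthogonality. Using ${\rm Tr}(w)=w+\bar w$ and $\overline{ab}=\bar b\bar a$, one computes ${\rm Tr}(\bar x y)=\bar x y+\bar y x$. Because $x,y$ are imaginary, so that $\bar x=-x$ and $\bar y=-y$, this equals $-(xy+yx)$, which is precisely the polarization $N(x+y)-N(x)-N(y)=x\bar y+y\bar x$ of the norm form; in coordinates it is twice the standard inner product $\sum_i x_iy_i$ on $\frak C_{\Bbb Q}$. Hence ${\rm Tr}(\bar x y)=0$ is equivalent to $x\perp y$, and the problem becomes: count ordered pairs of orthogonal $E_7$ roots. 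Note every such pair is automatically distinct, since a norm-one root is never orthogonal to itself.

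The remaining step is to count, for a fixed $E_7$ root $x$, the roots $y$ orthogonal to it. The roots orthogonal to a fixed root in $E_7$ form the subsystem $D_6$ arising from the decomposition $A_1\times D_6\subset E_7$, and $D_6$ has $2\cdot 6\cdot 5=60$ roots; concretely, the $126$ roots of $E_7$ distribute according to the value of their inner product with $x$ as $1+32+60+32+1$ for the (root-normalized) values $2,1,0,-1,-2$, so $60$ of them are orthogonal. Summing over the $126$ choices of $x$ then gives $126\times 60=7560$. The main obstacle is making the multiplicity $60$ rigorous: one either invokes the known $A_1\times D_6$ orthogonal subsystem structure of $E_7$, or verifies the inner-product distribution directly from the explicit root coordinates furnished by the octonion basis $\alpha_0,\dots,\alpha_7$, which is exactly the kind of finite check that can be (and, in the authors' approach, is) confirmed by machine.
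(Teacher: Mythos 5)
Your argument is correct, and it reaches the answer by a genuinely different route from the paper. The paper's proof is a machine computation: using Corollary \ref{Trace} (for norm-one octonions, ${\rm tr}(u\bar v)=2$ iff $u=v$), the authors recast the condition ``$\bar x y\in\frak o'$'' as the existence of $z\in\frak o'$ with $N(z)=1$ and ${\rm tr}((\bar x y)\bar z)=2$, and then count the resulting triples $(x,y,z)$ by Mathematica; the same number $7560$ reappears as $|S_{(1,1,1)}(2)|$ in Appendix A. You instead observe that integrality of $\bar x y$ is automatic (the Coxeter order $\frak o$ is closed under multiplication and conjugation), so the only real constraint is ${\rm Tr}(\bar x y)=0$, which for imaginary $x,y$ is exactly orthogonality with respect to the polarization of $N$; the count then becomes the number of ordered pairs of orthogonal $E_7$ roots, namely $126\times 60$ via the $A_1\times D_6$ subsystem (equivalently, the inner-product distribution $1+32+60+32+1$, with $32=2(h-2)$ for the Coxeter number $h=18$). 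Your approach buys a conceptual, computer-free proof and an explanation of where $7560$ comes from; the paper's approach buys uniformity, since the same brute-force scheme handles Propositions \ref{1/2-1} and \ref{1/2-2} and all the counts $|S_{(n_1,n_2,n_3)}(t)|$ in Appendix A, most of which do not reduce to a single clean orthogonality condition. To make your write-up fully self-contained you should cite the two standard inputs you invoke: closure of $\frak o$ under multiplication (Coxeter \cite{Co}), and the fact that the roots of $E_7$ orthogonal to a fixed root form a $D_6$ (or, as you note, verify the multiplicity $60$ directly from the explicit coordinates, which is a finite check).
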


These results will be used in Section 6.

\section{Explicit formulas for Fourier coefficients}
In this section, we recall explicit forms of Fourier coefficients 
of Eisenstein series and Ikeda type lifts from \cite{Kim, KY}.

\subsection{The Fourier coefficients of Eisenstein series}
Let $E_{2k}$ be the Eisenstein series of weight $2k$ in \cite{Ba, Kim}:
$$E_{2k}(Z)=
\sum_{T\in \frak J(\Z)_{\geq 0}} \widetilde A_{2k}(T) e^{2\pi i(T,Z)},\ Z\in \frak T,
$$
where $\widetilde A_{2k}(T)=C_{2k}A_{2k}(T)$, and 
for $T\in\frak J(\Z)_{\ge 0}$ (rank 3), $A_{2k}(T)=\det(T)^{\frac{2k-9}{2}} \ds\prod_{p|\det(T)} \widetilde{f}_T^p(p^{\frac {2k-9}2})$, and  
$C_{2k}=-8\ds\prod_{j=0}^2 \frac {2k-4j}{B_{2k-4j}}$. Here $f_T^p(X)$ is a polynomial with integer coefficients and degree $d_p=v_p(\det(T))$ and constant term 1. Then we put $\widetilde f_T^p(T):=X^{d_p}f_T^p(X^{-2})$.
[Note that in \cite{Ka}, the Bernoulli numbers $B_g$ are defined to be $(-1)^{\frac g2-1}\times$ (the usual Bernoulli numbers).]

Here $\widetilde A_{k}(O)=1$, and if $T$ has rank 1, $\widetilde A_{2k}(T)=C_{2k}^{(1)}A_{2k}(T)$, $A_{2k}(T)=\Delta(T)^{2k-1}\ds\prod_{p| \Delta(T)} f_T^p(p^{1-k})$ and $C_{2k}^{(1)}=-\ds\frac {4k}{B_{2k}}$, where $\Delta(T)$ is defined in \cite{Ka}. [$T$ is equivalent over $\Bbb Z$ to $\begin{pmatrix} T_1&0\\0&0\end{pmatrix}$, where $\det(T_1)\ne 0$. Then $\Delta(T)=\det(T_1)$.] When $T$ has rank 1, $f_T^p(X)=\ds\sum_{i=0}^{d_p} X^i$. Hence $A_{2k}(T)=\sigma_{2k-1}(\Delta(T))$.

If $T$ has rank 2,
\begin{eqnarray*}
&& \widetilde A_{2k}(T)=C_{2k}^{(2)}A_{2k}(T),\quad C_{2k}^{(2)}= \frac {8k(2k-4)}{B_{2k}B_{2k-4}},\\
&& A_{2k}(T)=\Delta(T)^{2k-5} \prod_{p| \Delta(T)}  f_T^p(p^{5-2k})=\sum_{d| \epsilon(T)} d^{2k-1} \sigma_{2k-5}(\Delta(T\times T)/d^2),
\end{eqnarray*}
where $T\times T$ is the Jordan algebra product defined in (\ref{JAP}) and 
$\epsilon(T)$ stands for the content of $T$.

\begin{remark} There are typos in \cite{Kim} and \cite{KK}. There $C_{2k}$ should be 
$$C_{2k}=-\frac {8 (2k)(2k-4)(2k-8)}{B_{2k}B_{2k-4}B_{2k-8}}.
$$
and $C_{2k}^{(2)}=\frac {4 (2k)(2k-4)}{B_{2k}B_{2k-4}}.$
It comes from the fact that $vol(\frak C/\frak o)=2^{-4}$. 
\end{remark}

\subsection{Ikeda type lift on $\frak T$}

Suppose $T$ has rank 3. Recall the formula for $f_T^p(X)$ from \cite{Ka}.
Let $T=\diag(t_1,t_2,t_3)$ and let $\tau_p(i)={\rm ord}_p t_i$. For simplicity, let $\tau(i)=\tau_p(i)$. 
We may assume that $\tau(1)\leq \tau(2)\leq \tau(3)$. 
Let $d_p=\tau(1)+\tau(2)+\tau(3)$.

If $\tau(1)=0$,
then
$$f_T^p(X)=\sum_{l=0}^{\tau(2)} (p^4 X)^l \frac {1-X^{\tau(3)+\tau(2)+1-2l}}{1-X}=\sum_{l=0}^{\tau(2)} (p^4 X)^l 
(1+X+\cdots+X^{\tau(3)+\tau(2)-2l}).
$$
We can see easily that

\begin{eqnarray*}
&& \widetilde{f}_T^p(X)=(X^{d_p}+X^{d_p-2}+\cdots+X^{-d_p})+p^4 (X^{d_p-2}+X^{d_p-4}+\cdots+X^{-d_p+2})+\cdots \\
&& \phantom{xxxxxxxxxxx} + p^{4\tau(2)} (X^{d_p-2\tau(2)}+X^{d_p-2\tau(2)-2}+\cdots+X^{-d_p+2\tau(2)}).
\end{eqnarray*}

Hence
\begin{eqnarray}\label{formula1}
&& (p^{\frac {2k-9}2})^{d_p}\widetilde{f}_T^p(\alpha_p)=a_f(p^{d_p})+p^{2k-5}a_f(p^{d_p-2})+\cdots + p^{\tau(2)(2k-5)}a_f(p^{d_p-2\tau(2)}) \nonumber \\
&& \phantom{xxxxxxxxxxxx} =\sum_{i=0}^{\tau(2)} (p^i)^{2k-5} a_f(p^{d_p-2i}).
\end{eqnarray}
 
For any positive integer $2k\ge 20$, let $f(\tau)=\ds\sum_{n=1}^\infty a_f(n)q^n$ be a Hecke eigenform of
weight $2k-8$ with respect to $\SL_2(\Bbb Z)$, and let 
\begin{equation}\label{Ikeda-lift}
F_f(Z)=\sum_{T\in \frak J(\Z)_{>0}}A_{F_f}(T) e^{2\pi i(T,Z)},\quad
A_{F_f}(T)=\det(T)^{\frac{2k-9}{2}} \prod_{p|\det(T)} \widetilde{f}_T^p(\alpha_p) ,\ Z\in \frak T_2,
\end{equation}
be the Ikeda type lift, constructed in \cite{KY}. Here $a_f(p)=p^{\frac {2k-9}2}(\alpha_p+\alpha_p^{-1})$, where $\{\alpha_p,\alpha_p^{-1}\}$ are Satake parameters. Then, we have

\begin{prop}\label{Ikeda} If $\det(T)$ is square free, $A_{F_f}(T)=a_f(\det(T))$. 
Also, if $T$ is of the form $\diag(1,1,n)$ for a positive integer $n$, then $A_{F_f}(T)=a_f(n)$. 
\end{prop}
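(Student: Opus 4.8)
The plan is to reduce both assertions to the single-term specialization of formula~(\ref{formula1}), the point being that in each of the two situations the middle ordered valuation $\tau(2)$ vanishes at every prime dividing $\det(T)$. First I would fix a prime $p\mid\det(T)$ and invoke the fact (recalled above from \cite[p.~521]{Ba}) that $T$ is equivalent over $\Z_p$ to a diagonal matrix $\diag(t_1,t_2,t_3)$; since the local factor $\widetilde{f}^p_T(\alpha_p)$ entering~(\ref{Ikeda-lift}) depends only on this $\Z_p$-class, I may compute with the ordered exponents $\tau(1)\le\tau(2)\le\tau(3)$, where $d_p=\tau(1)+\tau(2)+\tau(3)=v_p(\det T)$. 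The key structural observation is then that $\tau(2)=0$ in both cases. When $\det(T)$ is square free one has $d_p=1$ for each $p\mid\det(T)$, and the only decomposition $1=\tau(1)+\tau(2)+\tau(3)$ with $0\le\tau(1)\le\tau(2)\le\tau(3)$ is $\tau(1)=\tau(2)=0,\ \tau(3)=1$. When $T=\diag(1,1,n)$, two of the three diagonal entries are $p$-adic units at every $p$, so after reordering $\tau(1)=\tau(2)=0$ with $\tau(3)=v_p(n)$ and $d_p=v_p(n)$.

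Because $\tau(1)=0$ in both cases, formula~(\ref{formula1}), which was derived under exactly that hypothesis, applies; and since moreover $\tau(2)=0$, the sum in~(\ref{formula1}) collapses to its $i=0$ term, giving at each $p\mid\det(T)$
$$
\bigl(p^{\frac{2k-9}{2}}\bigr)^{d_p}\,\widetilde{f}^p_T(\alpha_p)=a_f(p^{d_p}).
$$
Writing the global normalization as $\det(T)^{\frac{2k-9}{2}}=\prod_{p\mid\det(T)}\bigl(p^{\frac{2k-9}{2}}\bigr)^{d_p}$ and substituting into~(\ref{Ikeda-lift}) yields
$$
A_{F_f}(T)=\prod_{p\mid\det(T)}\Bigl(p^{\frac{2k-9}{2}}\Bigr)^{d_p}\widetilde{f}^p_T(\alpha_p)=\prod_{p\mid\det(T)}a_f(p^{d_p}).
$$
Finally I would use the multiplicativity of the Hecke eigenvalues of the normalized eigenform $f$: as the $p^{d_p}$ are powers of distinct primes, $\prod_{p}a_f(p^{d_p})=a_f\bigl(\prod_p p^{d_p}\bigr)$. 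In the square-free case every $d_p=1$, so $\prod_p p^{d_p}=\det(T)$ and the product equals $a_f(\det T)$; in the case $T=\diag(1,1,n)$ one has $\det(T)=n=\prod_{p\mid n}p^{v_p(n)}=\prod_{p}p^{d_p}$, so the product equals $a_f(n)$.

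I expect no serious obstacle here: once the collapse $\tau(2)=0$ is recognized, the argument is essentially bookkeeping. The only two points requiring genuine care are (i) checking that the reduction to diagonal form over each $\Z_p$ feeds correctly into the definition of $\widetilde{f}^p_T$, so that the computation above is legitimate for an arbitrary rank-$3$ index $T$ and not merely a diagonal one, and (ii) tracking the fractional powers $p^{\frac{2k-9}{2}}$ carefully, so that the global prefactor in~(\ref{Ikeda-lift}) exactly absorbs the $\bigl(p^{\frac{2k-9}{2}}\bigr)^{d_p}$ factors produced by~(\ref{formula1}). Both are routine, and the statement for $\diag(1,1,n)$ is really just the square-free statement pushed through the single prime-power behaviour.
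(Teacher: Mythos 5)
Your proposal is correct and follows essentially the same route as the paper: the authors likewise observe that in both cases $\tau(1)=\tau(2)=0$ with $\tau(3)=d_p=\ord_p(\det T)$, so that formula~(\ref{formula1}) collapses to its $i=0$ term $a_f(p^{d_p})$, and the conclusion follows by multiplicativity of the Hecke eigenvalues. Your write-up merely makes explicit the diagonalization over $\Z_p$ and the bookkeeping of the prefactor $\det(T)^{\frac{2k-9}{2}}$, which the paper leaves implicit.
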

\begin{proof} By assumption, we have $\tau(1)=\tau(2)=0$ and $\tau(3)=d_p=
{\rm ord}_p(\det(T))$. 
The claim follows from (\ref{formula1}). 
\end{proof}

Similarly, for Eisenstein series, we have
\begin{prop}\label{Eisen-rank3} If $\det(T)$ is square free, then $A_{2k}(T)=\sigma_{2k-9}(\det(T))$. 
If $T$ is of the form $\diag(1,1,n)$ for a positive integer $n$, then $A_{2k}(T)=\sigma_{2k-9}(n)$. 
\end{prop}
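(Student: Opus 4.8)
The plan is to reduce the Eisenstein-series computation to the rank-3 formula for $A_{2k}(T)$ recalled in Section 4 and exploit the explicit shape of $f_T^p(X)$ given there. For a rank-3 index $T=\diag(t_1,t_2,t_3)$, the formula is $A_{2k}(T)=\det(T)^{\frac{2k-9}{2}}\prod_{p\mid\det(T)}\widetilde f_T^p(p^{\frac{2k-9}{2}})$, so everything hinges on evaluating $\widetilde f_T^p$ at the point $X=p^{\frac{2k-9}{2}}$, rather than at a Satake parameter $\alpha_p$ as in the Ikeda case. This is the key structural parallel with Proposition \ref{Ikeda}: the two propositions share the same polynomial $f_T^p$, and differ only in the substitution.

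First I would treat the square-free case. If $\det(T)$ is square free, then at each prime $p\mid\det(T)$ the valuations satisfy $\tau(1)=\tau(2)=0$ and $\tau(3)=d_p=\mathrm{ord}_p(\det T)=1$. Feeding these into the displayed formula for $\widetilde f_T^p$ (the $\tau(1)=0$ branch), the inner sum over $l$ collapses to the single term $l=0$, giving $\widetilde f_T^p(X)=X^{d_p}+X^{d_p-2}+\cdots+X^{-d_p}$. Substituting $X=p^{\frac{2k-9}{2}}$ and multiplying by the prefactor $(p^{\frac{2k-9}{2}})^{d_p}$ coming from $\det(T)^{\frac{2k-9}{2}}$, I would get a geometric progression $1+p^{2k-9}+p^{2(2k-9)}+\cdots+p^{d_p(2k-9)}$ at each prime, exactly as in the Ikeda computation (\ref{formula1}) but with $a_f(p^j)$ replaced by the divisor-power contribution. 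Taking the product over all $p\mid\det(T)$ and using multiplicativity of $\sigma_{2k-9}$ together with $\sigma_{2k-9}(p)=1+p^{2k-9}$ for square-free arguments then yields $A_{2k}(T)=\sigma_{2k-9}(\det T)$.

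For the second case $T=\diag(1,1,n)$, the same local analysis applies prime by prime: at each $p\mid n$ we now have $\tau(1)=\tau(2)=0$ and $\tau(3)=d_p=\mathrm{ord}_p(n)$, which is again the $\tau(2)=0$ situation, so $\widetilde f_T^p(X)=\sum_{j=-d_p}^{d_p}{}' X^{2j}$ (the sum over $X^{d_p},X^{d_p-2},\dots,X^{-d_p}$). Evaluating at $X=p^{\frac{2k-9}{2}}$ with the prefactor produces the full local factor $1+p^{2k-9}+\cdots+p^{d_p(2k-9)}=\sigma_{2k-9}(p^{d_p})$, and the product over $p\mid n$ gives $\sigma_{2k-9}(n)$ by multiplicativity. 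I would note that $\det(\diag(1,1,n))=n$ here, so the statement is internally consistent.

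The main obstacle is really bookkeeping rather than conceptual: one must verify that for these two special families of $T$ the local valuation data genuinely forces the collapse $\tau(2)=0$ at every relevant prime, so that the single clean branch of the $\widetilde f_T^p$ formula applies and no higher $l$-terms survive. In particular one should check that $\det(T)$ agrees with the product of the diagonal entries in these cases (diagonal $T$), justifying that $p\mid\det(T)$ iff $p$ divides the relevant entry. Once that is pinned down, the identification of the local factor with $\sigma_{2k-9}(p^{d_p})$ and the assembly via multiplicativity are routine, mirroring the proof of Proposition \ref{Ikeda} with $a_f$ replaced by the trivial Hecke-eigenvalue pattern of the Eisenstein series.
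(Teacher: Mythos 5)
Your proof is correct and follows essentially the same route as the paper, which gives no separate argument for this proposition but simply says ``Similarly'' with reference to the proof of Proposition \ref{Ikeda}: in both cases one notes $\tau(1)=\tau(2)=0$, so the sum over $l$ in $f_T^p$ collapses, and one evaluates $\widetilde f_T^p$ at $p^{\frac{2k-9}{2}}$ instead of $\alpha_p$, obtaining $\sigma_{2k-9}(p^{d_p})$ locally and concluding by multiplicativity.
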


Next we consider another explicit formula.
If $T=\begin{pmatrix}
1&x&y\\
\bar x&1&z\\ 
\bar y&\bar z&m
\end{pmatrix}\in \frak J(\Bbb Z)_{>0}$, then 
$\det\begin{pmatrix}
1 & x \\
\bar x & 1 
\end{pmatrix}=1-N(x)
>0$ (see Section \ref{excep} for the criterion). Thus, $x=0$, and $T\times T=\begin{pmatrix} m-N(z)&-y\bar z&-y\\ c\bar b&m-N(y)& -z\\ -\bar y&-\bar z&1\end{pmatrix}$.
Hence $T$ and $T\times T$ are primitive. Here $\det(T)=m-N(y)-N(z)$. Therefore, for each prime $p$, $T$ is equivalent to $\diag(1,1,\det(T))$ over $\Bbb Z_p$.
Therefore, by Proposition \ref{Ikeda}, we have

\begin{prop}\label{Ikeda1} If $T=\begin{pmatrix} 1&x&y\\ \bar x&1&z\\ \bar y&\bar z&m\end{pmatrix}\in 
\frak J(\Bbb Z)_{> 0}$, then $A_{F_f}(T)=a_f(m-N(y)-N(z)).$
\end{prop}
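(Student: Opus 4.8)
The plan is to reduce $T$ to its local diagonal form $\diag(1,1,\det(T))$ at every prime and then invoke Proposition \ref{Ikeda}. First I would exploit positive definiteness: since $T\in\frak J(\Bbb Z)_{>0}$, the $2\times 2$ leading minor must satisfy $1\cdot 1-N(x)>0$ by the criterion recalled in Section \ref{excep}, and as $x$ is an integral octonion with $N(x)\in\Bbb Z_{\ge 0}$ this forces $N(x)=0$, i.e. $x=0$. With $x=0$ the defining formulas give $\det(T)=m-N(y)-N(z)$ and, via (\ref{JAP}), an explicit expression for $T\times T$ whose $(3,3)$-entry equals $1-N(x)=1$.

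Next I would observe that both $T$ and $T\times T$ are primitive: each carries an entry equal to $1$ on its diagonal (the $(1,1)$- and $(2,2)$-entries of $T$, and the $(3,3)$-entry of $T\times T$), so the content of each is trivial at every prime $p$. This primitivity is the key structural input.

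The main step is the local diagonalization. By \cite[p. 521]{Ba} any element of $\frak J(\Bbb Z)_{\ge 0}$ is $\bold M'(\Bbb Z_p)$-equivalent to some $\diag(t_1,t_2,t_3)$ with $\tau(1)\le\tau(2)\le\tau(3)$, where $\tau(i)=\ord_p t_i$. For such a diagonal form, (\ref{JAP}) gives $T\times T=\diag(t_2t_3,t_1t_3,t_1t_2)$, so the $p$-contents of $T$ and of $T\times T$ equal $p^{\tau(1)}$ and $p^{\tau(1)+\tau(2)}$ respectively, while $\det(T)$ has $p$-valuation $\tau(1)+\tau(2)+\tau(3)$. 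Since both contents are trivial, $\tau(1)=\tau(2)=0$ and $\tau(3)=v_p(\det(T))$; hence $T$ is $\Bbb Z_p$-equivalent to $\diag(1,1,\det(T))$ for every $p$.

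Finally, the Fourier coefficient $A_{F_f}(T)$ in (\ref{Ikeda-lift}) depends only on the $\Bbb Z_p$-equivalence class of $T$ at each prime, since the local polynomial $\widetilde f_T^p$ is computed from the $\Bbb Z_p$-diagonalization of $T$; thus $A_{F_f}(T)=A_{F_f}(\diag(1,1,\det(T)))$, and Proposition \ref{Ikeda} yields $A_{F_f}(T)=a_f(\det(T))=a_f(m-N(y)-N(z))$. I expect the only real subtlety to be the identification of the local elementary divisors with the contents of $T$ and of $T\times T$; once that is in hand, the remainder is bookkeeping with (\ref{JAP}).
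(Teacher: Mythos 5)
Your proposal is correct and follows essentially the same route as the paper: positivity forces $x=0$, the unit diagonal entries of $T$ and the $(3,3)$-entry $1-N(x)=1$ of $T\times T$ give primitivity of both, and this pins down the local diagonalization as $\diag(1,1,\det(T))$ so that Proposition \ref{Ikeda} applies. The only difference is that you spell out the step the paper leaves implicit, namely that the contents of $T$ and of $T\times T$ are the local invariants identifying $\tau(1)$ and $\tau(1)+\tau(2)$.
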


\begin{prop}\label{Ikeda2} Let $p$ be a prime number. If $T=\diag(p,p,p)\in 
\frak J(\Bbb Z)_{> 0}$, then $A_{F_f}(T)=a_f(p)^3+p^{2k-9}(p^8+p^4-2)a_f(p).$
\end{prop}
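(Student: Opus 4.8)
The plan is to reduce the computation to a single local factor and then translate the resulting symmetric Laurent polynomial in the Satake parameter $\alpha_p$ back into Hecke eigenvalues. Since $\det(T)=p^3$ is a prime power, the only local factor contributing to the product in (\ref{Ikeda-lift}) is the one at $p$, so that $A_{F_f}(T)=p^{\frac{3(2k-9)}{2}}\,\widetilde f_T^p(\alpha_p)$, and everything comes down to identifying the polynomial $f_T^p(X)$ in the case $\tau_p(1)=\tau_p(2)=\tau_p(3)=1$, $d_p=3$.

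First I would write down $f_T^p(X)$ for this diagonal index from the general formula of \cite{Ka}. The expansion displayed in Section 4 only covers the case $\tau(1)=0$, whereas here $\tau(1)=1$, so the polynomial acquires extra terms; specializing the general formula gives $f_T^p(X)=1+(1+p^4+p^8)(X+X^2)+X^3$, which is palindromic and has constant term $1$ and degree $d_p=3$, as it must. Consequently $\widetilde f_T^p(X)=X^3 f_T^p(X^{-2})=(X^3+X^{-3})+(1+p^4+p^8)(X+X^{-1})$.

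Next I would convert back to Hecke eigenvalues using the identity $p^{\frac{n(2k-9)}{2}}(\alpha_p^n+\alpha_p^{n-2}+\cdots+\alpha_p^{-n})=a_f(p^n)$, which follows from $a_f(p)=p^{\frac{2k-9}{2}}(\alpha_p+\alpha_p^{-1})$ exactly as in the derivation of (\ref{formula1}). Multiplying $\widetilde f_T^p(\alpha_p)$ by $p^{\frac{3(2k-9)}{2}}$ and collecting terms yields $A_{F_f}(T)=a_f(p^3)+p^{2k-9}(p^8+p^4)a_f(p)$, using $p^{\frac{3(2k-9)}{2}}(\alpha_p^3+\alpha_p^{-3})=a_f(p^3)-p^{2k-9}a_f(p)$ together with $p^{\frac{3(2k-9)}{2}}(\alpha_p+\alpha_p^{-1})=p^{2k-9}a_f(p)$.

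Finally, since $f$ is a Hecke eigenform of weight $2k-8$, the recursion $a_f(p^{r+1})=a_f(p)a_f(p^r)-p^{2k-9}a_f(p^{r-1})$ gives $a_f(p^2)=a_f(p)^2-p^{2k-9}$ and hence $a_f(p^3)=a_f(p)^3-2p^{2k-9}a_f(p)$. Substituting this into the expression above produces $a_f(p)^3+p^{2k-9}(p^8+p^4-2)a_f(p)$, as claimed. The only genuinely nontrivial step is the first one, namely extracting the correct polynomial $f_T^p(X)$ for $\tau(1)=1$ from \cite{Ka}: the expansion recorded in Section 4 applies only when the smallest valuation vanishes, and the $\tau(1)=1$ case contributes the additional term $p^{2k-1}a_f(p)=p^{2k-9}\cdot p^8 a_f(p)$ that is absent from the naive extrapolation of (\ref{formula1}). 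Once that polynomial is in hand, the remaining steps are the same bookkeeping that produced (\ref{formula1}) together with elementary Hecke relations.
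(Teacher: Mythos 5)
Your proof is correct and follows essentially the same route as the paper: both reduce to the single local factor at $p$, extract the palindromic polynomial $f_T^p(X)=X^3+(p^8+p^4+1)X^2+(p^8+p^4+1)X+1$ from Karel's formula, and convert the resulting symmetric Laurent polynomial in $\alpha_p$ into Hecke eigenvalues. The only cosmetic difference is that the paper regroups $\widetilde f_T^p(X)$ as $(X+X^{-1})^3+(p^8+p^4-2)(X+X^{-1})$ and substitutes $a_f(p)=p^{\frac{2k-9}{2}}(\alpha_p+\alpha_p^{-1})$ directly, whereas you pass through $a_f(p^3)$ and the Hecke recursion --- equivalent bookkeeping.
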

\begin{proof}By the formula in \cite[p.201]{Ka}, we see that 
$$f^p_T(X)=X^3+(p^8+p^4+1)X^2+(p^8+p^4+1)X+1.$$
Since $d_p=3$, we have $\widetilde{f}^p_T(X)=
(X+X^{-1})^3+(p^8+p^4-2)(X+X^{-1})$. Thus, it yields 
$$A_{F_f}(T)=(p^{\frac{2k-9}{2}})^3\widetilde{f}^p_T(\alpha_p)
=a_f(p)^3+p^{2k-9}(p^8+p^4-2)a_f(p)$$
as desired.
\end{proof}

\begin{prop}\label{Ikeda3} Let $p$ be a prime number. If $T=\diag(1,p,p)\in 
\frak J(\Bbb Z)_{> 0}$, then $A_{F_f}(T)=a_f(p^2)+p^{2k-5}.$
\end{prop}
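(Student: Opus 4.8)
Proposition \ref{Ikeda3} asserts that for $T=\diag(1,p,p)$, the Ikeda-lift Fourier coefficient is $A_{F_f}(T)=a_f(p^2)+p^{2k-5}$. Let me plan a proof using the explicit formula machinery from the preceding subsection.

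The plan is to apply formula (\ref{formula1}) directly, since $T=\diag(1,p,p)=\diag(t_1,t_2,t_3)$ is already diagonal with $t_1=1$, $t_2=t_3=p$. First I would read off the relevant $p$-adic valuations: $\tau(1)=\ord_p(1)=0$, $\tau(2)=\tau(3)=\ord_p(p)=1$, so the ordering hypothesis $\tau(1)\le\tau(2)\le\tau(3)$ holds and $d_p=\tau(1)+\tau(2)+\tau(3)=2$. Because $\tau(1)=0$, the case treated in the displayed formula for $f_T^p(X)$ applies, so (\ref{formula1}) is valid with $\tau(2)=1$ and $d_p=2$.

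Next I would simply specialize the sum in (\ref{formula1}), which reads $(p^{\frac{2k-9}{2}})^{d_p}\widetilde f_T^p(\alpha_p)=\sum_{i=0}^{\tau(2)}(p^i)^{2k-5}a_f(p^{d_p-2i})$. With $\tau(2)=1$ and $d_p=2$ this is a two-term sum: the $i=0$ term gives $a_f(p^{2})$ and the $i=1$ term gives $(p)^{2k-5}a_f(p^{0})=p^{2k-5}a_f(1)=p^{2k-5}$, using $a_f(1)=1$ for a normalized Hecke eigenform. Since $\det(T)=p^2$ has $p$ as its only prime divisor, the product over $p\mid\det(T)$ in (\ref{Ikeda-lift}) consists of this single factor, and the leading factor $\det(T)^{\frac{2k-9}{2}}=(p^{\frac{2k-9}{2}})^{d_p}$ is exactly the normalization absorbed on the left-hand side of (\ref{formula1}). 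Collecting these, $A_{F_f}(T)=a_f(p^2)+p^{2k-5}$, as claimed.

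There is essentially no obstacle here: the result is a one-line specialization of (\ref{formula1}), entirely parallel to Proposition \ref{Ikeda} (where $\tau(2)=0$ collapses the sum to a single term $a_f(\det T)$) and to Proposition \ref{Ikeda2} (where $\tau(2)=1$ but $d_p=3$). The only points requiring a word of care are confirming that the $\tau(1)=0$ branch of the formula is the operative one and normalizing $a_f(1)=1$; both are immediate. I would therefore keep the proof to two short sentences, invoking (\ref{formula1}) with $(\tau(2),d_p)=(1,2)$ and reading off the two terms.
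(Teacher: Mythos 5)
Your proof is correct and follows essentially the same route as the paper: the authors likewise specialize the $\tau(1)=0$ case with $\tau(2)=1$, $d_p=2$, writing $\widetilde f^p_T(X)=X^2+1+X^{-2}+p^4$ and reading off $A_{F_f}(T)=a_f(p^2)+p^{2k-5}$ from (\ref{formula1}). No gaps.
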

\begin{proof} By the formula (\ref{formula1}), 
$\widetilde{f}^p_T(X)=
X^2+1+X^{-2}+p^4$. Thus, it yields 
$$A_{F_f}(T)=(p^{\frac{2k-9}{2}})^2\widetilde{f}^p_T(\alpha_p)
=a_f(p^2)+p^{2k-5}.$$
\end{proof}

\section{Siegel modular forms of $Sp_6$ of level one}

We record here Siegel modular forms of $Sp_6$ of level one, computed by Yuen-Poor-Shurman \cite{YPS}.
Consider weight 20 case: dim$M_{20}(\Gamma_3)=11$ and dim$S_{20}(\Gamma_3)=6$. In their notations, 
\begin{enumerate}
  \item $f_1=E_{20}^{(3)}$: Siegel Eisenstein series;
  \item $f_2$ is Klingen Eisenstein series formed from the weight 20 elliptic cusp form;
  \item $f_3, f_4$ are Klingen Eisenstein formed from Saito-Kurokawa lifts; Since dim$S_{38}(\SL_2(\Bbb Z))=2$, the two forms give rise to two Saito-Kurokawa lift of weight 20 and degree 2;
  \item $f_5$ is Klingen Eisenstein series formed from the unique degree 2 genuine form;
  \item $f_6,f_7,f_8$ are Miyawaki lifts of type I. They are given as follows;
For $f\in S_{2k-4}(\SL_2(\Bbb Z))$ and $g\in S_{k}(\SL_2(\Bbb Z))$, we obtain $F_{f,g}\in S_k(\Sp_6(\Bbb Z))$.
Since dim$S_{36}(\SL_2(\Bbb Z))=3$ and dim$S_{20}(\SL_2(\Bbb Z))=1$, they are given by $F_{f,g}$, where $f\in S_{36}(\SL_2(\Bbb Z))$ and $g\in S_{20}(\SL_2(\Bbb Z))$ in the notation of \cite{Ik}.
\item $f_9, f_{10}$ are conjectural Miyawaki lifts of type II; They are conjecturally given as follows;
For $f\in S_{2k-2}(\SL_2(\Bbb Z))$ and $g\in S_{k-2}(\SL_2(\Bbb Z))$, we obtain $F_{f,g}\in S_k(\Sp_6(\Bbb Z))$.
Since dim$S_{38}(\SL_2(\Bbb Z))=2$ and dim$S_{18}(\SL_2(\Bbb Z))=1$, they are given by $F_{f,g}$, where $f\in S_{38}(\SL_2(\Bbb Z))$ and $g\in S_{18}(\SL_2(\Bbb Z))$ in the notation of \cite{Ik}.
\item $f_{11}$ is the genuine form which is not a lift.
\end{enumerate}


\section{Restriction of Modular forms on $E_{7,3}$ to $Sp_6$}\label{RMF} 
Let $F$ be a modular form on $\frak T$, and consider $F|_{Sp_6}$. 
We write $Z=Z_1+Z_2$ for $Z\in \frak T$ and $T=T_1+T_2$ for $T\in \frak J(\Z)_{\ge 0}$ 
as in Section \ref{excep} so that $Z_1\in \mathbb{H}_3$ and $T_1\in {\rm Sym}^3(\Z)_{\ge 0}$. 
The Fourier coefficients of the restriction $F|_{Sp_6}$ is described in terms of those of $F$ 
as follows:
\begin{lemma}\label{someformula}It holds that 
$$F|_{Sp_6}(Z_1)=F(Z_1)=\sum_{T\in \frak J(\Bbb Z)_{\ge 0}} A_F(T) e^{2\pi i (T_1,Z_1)}=\sum_{S\in 
{\rm Sym}^3(\Bbb Z)_{\ge 0}} 
\left(\sum_{T\in\frak J(\Bbb Z)_{\ge 0}\atop T_1=S} A_F(T)\right) e^{2\pi i {\rm Tr}(S Z_1)}.
$$
\end{lemma}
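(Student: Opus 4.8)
The plan is to expand $F$ in its Fourier series, restrict $Z$ to the image of $\Bbb H_3\hookrightarrow\frak T$, and show that the exponent $(T,Z)$ depends on $T$ only through its symmetric part $T_1$, so that collecting the terms with a common value $T_1=S$ produces the reindexed series. First I would record the effect of the embedding: a point $Z_1\in\Bbb H_3$ maps to the element of $\frak T$ whose imaginary octonion part vanishes, i.e. $Z=Z_1$ with $Z_2=0$ in the notation of Section \ref{excep}. Hence $F|_{Sp_6}(Z_1)=F(Z_1)=\sum_{T}A_F(T)e^{2\pi i(T,Z_1)}$, the sum running over $T\in\frak J(\Z)_{\ge 0}$ and $(\cdot,\cdot)$ being the trace form $(X,Y)={\rm Tr}(X\circ Y)$.

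The key step is to evaluate $(T,Z_1)$. I would use the orthogonal decomposition of the Jordan algebra under the trace form, $\frak J_\C=V_{\rm sym}\oplus V_{\rm im}$, where $V_{\rm sym}$ consists of the complex symmetric matrices (the diagonal entries together with the $e_0$-components of the off-diagonal octonions) and $V_{\rm im}$ consists of the off-diagonal imaginary-octonion parts. Writing $T=T_1+T_2$ and noting $Z_1\in V_{\rm sym}$, the assertion $(T,Z_1)=(T_1,Z_1)$ reduces to the orthogonality $(T_2,Z_1)=0$. This follows from the explicit shape of the trace form: the diagonal of $T_2$ vanishes, and each off-diagonal contribution is proportional to $\langle w_{ij},(z_{ij})e_0\rangle$ with $w_{ij}$ imaginary and $\langle\cdot,\cdot\rangle$ the octonion inner product, so it vanishes because $e_0$ is orthogonal to every imaginary octonion. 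Restricted to $V_{\rm sym}$ the trace form reduces to the ordinary matrix trace, giving $(T_1,Z_1)={\rm Tr}(T_1Z_1)={\rm Tr}(SZ_1)$ with $S=T_1$; the factor $2$ on off-diagonal entries is exactly what matches the half-integral convention for ${\rm Sym}^3(\Z)$.

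With $(T,Z_1)={\rm Tr}(T_1Z_1)$ in hand, the exponential $e^{2\pi i(T,Z_1)}$ depends on $T$ only via $T_1$, and grouping the Fourier series according to $S=T_1$ yields the stated formula. To legitimize this rearrangement I would check that for each fixed $S$ the fibre $\{T\in\frak J(\Z)_{\ge 0}:T_1=S\}$ is finite: the diagonal of $T$ equals that of $S$, and the semidefiniteness conditions of Section \ref{excep} (for instance $ab-N(t_{12})\ge 0$) bound the norms of the off-diagonal octonions, hence the norms of their imaginary parts $w_{ij}\in\frak o'$; since there are only finitely many integral octonions of bounded norm, the fibre is finite and the inner sum $\sum_{T_1=S}A_F(T)$ is a genuine finite sum, so the reordering is valid together with the absolute convergence of the Fourier series on compact sets.

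The main obstacle is the trace-form computation in the second step: verifying that $V_{\rm sym}$ and $V_{\rm im}$ are orthogonal and that the form restricts to the matrix trace on $V_{\rm sym}$ is where the octonionic structure genuinely enters, and where the normalization of the pairing must be tracked carefully so that ${\rm Tr}(SZ_1)$ comes out with the correct half-integral scaling. The embedding bookkeeping, the regrouping, and the finiteness argument are then routine.
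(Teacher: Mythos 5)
Your proposal is correct and follows essentially the same route as the paper: the whole lemma rests on the single observation that $(T_2,Z_1)=0$, which is exactly the paper's one-line proof, after which the regrouping by $S=T_1$ is immediate. Your additional remarks on the finiteness of each fibre $\{T:T_1=S\}$ and the resulting validity of the rearrangement are a harmless elaboration of points the paper leaves implicit.
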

\begin{proof}The claim easily follows from $(T_2,Z_1)=0$.
\end{proof}

For $S\in {\rm Sym}^3(\Bbb Z)_{\ge 0}$, let $A_{F|_{Sp_6}}(S):=\sum_{T\in\frak J(\Bbb Z)_{\ge 0}\atop T_1=S} A_F(T)$ so that
$$F|_{Sp_6}(Z_1)=\sum_{S\in {\rm Sym}^3(\Bbb Z)_{\ge 0}} A_{F|_{Sp_6}}(S) e^{2\pi i {\rm Tr}(S Z_1)}.
$$

For modular forms on $\frak T$ of weight $k$ with level one, we give a strategy to write their restriction to $Sp_6$ as a linear combination of 
a basis of $M_k(\Sp_6(\Z))$ as follows:
\begin{enumerate}
\item  Give formulas for $A_{F|_{Sp_6}}(S)$ for particular $S\in {\rm Sym}^3(\Z)_{\ge 0}$. 
(see Propositions 4.2 to 4.6);
\item By using an explicit basis $f_1,\ldots,f_{d_k}$ of $M_k(\Sp_6(\Z))$ due to Yuen-Poor-Shurman, we test and repeat if $A:=(A_{f_i}(S_j))_{1\le i,j\le d_k}$ is invertible for some 
$S_1,\ldots,S_{d_k}\in {\rm Sym}^3(\Z)_{\ge 0}$ of which we can apply the above formulas;
\item If the second step fails, we add new elements 
$H_1,\ldots,H_a\in  {\rm Sym}^3(\Z)_{\ge 0}$ so that the matrix obtained by appending 
$d_k\times a$-matrix $(A_{f_i}(H_j))_{1\le i\le d_k\atop 1\le j\le a}$ to the matrix $A$ is of rank $d_k$.
Then, we compute $A_{F|_{Sp_6}}(H_j)$ for $1\le j\le a$. 
In the course of the computation, for each $1\le j\le a$, we need to list up all $T\in \frak J(\Z)_{\ge 0}$ 
such that $T_1=H_j$  and this part is tedious (see Section \ref{Tdiag222}).   
\item Even if we could successfully have a linear combination for specific $\{S_j\}$ 
(with $\{H_j\}$ if necessary), we can check if the resulting one is actually true by 
choosing  another matrices of ${\rm Sym}^3(\Z)_{\ge 0}$ to confirm. 
We have carried out this procedure for each examples we obtained. 
\end{enumerate}

\begin{lemma}\label{vanishing}
Assume $F$ is a cusp form. Then, for 
$S=\begin{pmatrix}
1&\frac 12& a\\ 
\frac 12&1&b\\
 a&b&c
 \end{pmatrix}\in {\rm Sym}^3(\Bbb Z)_{\ge 0}$, $A_{F|_{Sp_6}}(S)=0$.
\end{lemma}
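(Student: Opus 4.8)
The plan is to show that \emph{every} index $T\in\frak J(\Z)_{\ge 0}$ appearing in the sum of Lemma~\ref{someformula} that defines $A_{F|_{Sp_6}}(S)$ (i.e. with $T_1=S$) fails to be positive definite; since the Fourier expansion of a cusp form is supported on $\frak J(\Z)_{>0}$ (cf. (\ref{Ikeda-lift}) and \cite{KY}), each such $A_F(T)$ vanishes and hence so does their sum. First I would parametrize these $T$. Writing $T=T_1+T_2$ as in Section~\ref{excep}, the condition $T_1=S$ forces
\[
T=\begin{pmatrix} 1 & \tfrac12+x' & a+y'\\ \overline{\tfrac12+x'} & 1 & b+z'\\ \overline{a+y'} & \overline{b+z'} & c\end{pmatrix},
\]
where $x',y',z'$ are imaginary octonions and $x:=\tfrac12+x',\ y:=a+y',\ z:=b+z'$ lie in $\frak o$. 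The inequality $1\cdot 1-N(x)\ge 0$ defining $\frak J(\Z)_{\ge 0}$ gives $N(x)\le 1$; but $x\in\frak o$ has integral norm (as $\frak o\cong\tfrac1{\sqrt2}E_8$ is an integral lattice) with $N(x)=\tfrac14+N(x')\ge\tfrac14>0$, so $N(x)=1$. (These $x$ are exactly the $56$ octonions of Proposition~\ref{1/2}.)

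The crux is to deduce $\det T=0$. With $N(x)=1$ the determinant formula of Section~\ref{excep} collapses to
\[
\det T= c-N(z)-N(y)-c\,N(x)+{\rm Tr}\big((xz)\overline y\big)={\rm Tr}\big((xz)\overline y\big)-N(y)-N(z).
\]
Because $N$ is multiplicative on the composition algebra $\frak C_\R$ we have $N(xz)=N(x)N(z)=N(z)$, and since ${\rm Tr}(u\overline v)=u\overline v+v\overline u$ is twice the polarization of the positive-definite norm form $N$, the Cauchy--Schwarz inequality followed by AM--GM gives
\[
{\rm Tr}\big((xz)\overline y\big)\le 2\sqrt{N(xz)\,N(y)}=2\sqrt{N(y)N(z)}\le N(y)+N(z).
\]
Hence $\det T\le 0$; combined with $\det T\ge 0$ (as $T\in\frak J(\Z)_{\ge 0}$) this forces $\det T=0$.

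Thus every $T$ with $T_1=S$ lies on the boundary of the cone $\overline{R^+_3(\R)}$ and is not positive definite, so $A_F(T)=0$ by cuspidality of $F$; summing over such $T$ gives $A_{F|_{Sp_6}}(S)=0$. The main obstacle is precisely this determinant step: one must recognize that the special $2\times2$ block $\left(\begin{smallmatrix}1&\frac12\\ \frac12&1\end{smallmatrix}\right)$ pins $N(x)$ to $1$, and that this, via multiplicativity of the octonion norm together with Cauchy--Schwarz/AM--GM, is exactly what is needed to force $\det T=0$ (vanishing of a single $2\times2$ minor does not by itself obviously guarantee rank-deficiency in the Jordan-algebra setting, so the explicit octonionic computation is essential). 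The only other ingredient requiring care is the input that cusp forms on $\frak T$ have Fourier support on $\frak J(\Z)_{>0}$, which I would take from the cuspidality/$\Phi$-operator considerations recalled earlier and \cite{KY}.
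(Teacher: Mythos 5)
Your argument is correct and its engine is the same as the paper's: any $T\in\frak J(\Z)_{\ge 0}$ with $T_1=S$ has $(1,2)$-entry $\tfrac12+x'$ equal to a \emph{nonzero} integral octonion, hence of norm a positive integer. Where you diverge is in how you exclude such $T$ from the support of a cusp form, and your route contains a superfluous step. The paper simply invokes the strict criterion for $\frak J(A)_{>0}$ recalled in Section \ref{excep}, which lists $ab-N(x)>0$ among the \emph{defining} inequalities of positivity: here $ab-N(\tfrac12+x')=1-N(\tfrac12+x')\le 0$, so no $T$ with $T_1=S$ can lie in $\frak J(\Z)_{>0}$ at all, and the sum defining $A_{F|_{Sp_6}}(S)$ is empty. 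Your stated worry that ``vanishing of a single $2\times 2$ minor does not by itself obviously guarantee rank-deficiency in the Jordan-algebra setting'' is therefore moot — failure of that minor inequality already disqualifies $T$ by definition, with no need to compute $\det T$. That said, your Cauchy--Schwarz/AM--GM computation showing $\det T=\mathrm{Tr}((xz)\bar y)-N(y)-N(z)\le 0$, hence $=0$, is itself correct (it is essentially Corollary \ref{ineq} plus multiplicativity of the norm), and it buys a slightly stronger picture: every semi-definite $T$ lying over such an $S$ is a genuine boundary point of the cone, which is exactly the phenomenon exhibited in Remark \ref{v-rm}. For the lemma as stated, however, the minor condition alone suffices, and the paper's one-line contradiction ($N(\tfrac12+x')<1$ forces $\tfrac12+x'=0$, impossible for $x'$ imaginary) is the shorter path to the same conclusion.
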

\begin{proof}
Let $T=\begin{pmatrix}
1&\frac 12+x & \ast\\ 
\frac 12+\bar{x}&1&\ast\\
 \ast&\ast&\ast
 \end{pmatrix}$ such that $T_1=S$ where $x$ is an imaginary octonion in $\frac{1}{2}\frak o$ such that  $\frac{1}{2}+x\in \frak o$. Since $F$ is a cusp form, 
 we may assume $T\in \frak J(\Z)_{>0}$.  
By applying the criterion in Section \ref{excep}, we have $1>N(\frac{1}{2}+x)$ and it yields 
$\frac{1}{2}+x=0$. Since $x$ is an imaginary octonion, it is impossible. 
Thus, there is no element $T\in \frak J(\Bbb Z)_{> 0}$ such that $T_1=S$. Hence for such $S$, $A_{F|_{Sp_6}}(S)=0$. 
\end{proof}
\begin{remark}\label{v-rm}Put $x=-\alpha_1-\alpha_5=
\frac{1}{2}(e_0+e_1+e_4-e_5)\in \frak o$. 
Let $T=\begin{pmatrix}
1& x & 0\\ 
x&1& 0\\
 0&0& 1
 \end{pmatrix}$. Then, $T\in \frak J(\Z)_{\ge 0}\setminus \frak J(\Z)_{>0}$, since $\det(\begin{pmatrix}
1& x \\ 
x&1  
\end{pmatrix}
)=1-N(x)=0$, but $T_1=\begin{pmatrix}
1& \frac{1}{2} & 0\\ 
\frac{1}{2}&1& 0\\
 0&0& 1
 \end{pmatrix}\in  {\rm Sym}^3(\Bbb Z)_{>0}$. Hence the claim of Lemma \ref{vanishing} 
is not true in general for non-cusp forms.  
\end{remark}

\begin{prop}\label{Da} Let $F=F_f$ be the Ikeda type lift defined in $($\ref{Ikeda-lift}$)$. 
For $D_a=\diag(1,1,a)$ with $a\in \Z_{>0}$, 
$$A_{F|_{Sp_6}}(D_a)=\sum_{n=1}^a a_f(n) \#\{(y,z)\in \frak o'\times \frak o'\ |\ 
N(y)+N(z)=a-n,\ N(y)<a,\ N(z)<a\}.
$$
\end{prop}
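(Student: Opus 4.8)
The plan is to read off $A_{F|_{Sp_6}}(D_a)$ from the Fourier-coefficient formula of Lemma \ref{someformula}, which expresses it as the sum of $A_{F_f}(T)$ over all $T\in\frak J(\Z)_{\ge 0}$ with $T_1=D_a$, and then to enumerate these $T$ explicitly. Since the Ikeda type lift $F_f$ is a cusp form, its Fourier expansion is supported on positive-definite indices, so $A_{F_f}(T)=0$ unless $T\in\frak J(\Z)_{>0}$; hence I may restrict the sum to positive-definite $T$.

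First I would pin down the shape of an admissible index. Writing
$$T=\begin{pmatrix}1&x&y\\ \bar x&1&z\\ \bar y&\bar z&a\end{pmatrix},\qquad x,y,z\in\frak o,$$
the requirement $T_1=D_a=\diag(1,1,a)$ says precisely that the real parts of $x,y,z$ vanish, i.e. $x,y,z$ are imaginary; being integral octonions they then lie in $\frak o'$. Next I would apply the positivity criterion of Section \ref{excep}. The minor inequality $1-N(x)>0$ gives $N(x)<1$, and since $x\in\frak o'$ has non-negative integer norm (a nonzero imaginary integral octonion has norm at least $1$, by the $E_7$ structure recalled in Section 3), this forces $N(x)=0$, hence $x=0$. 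The remaining conditions are $N(y)<a$, $N(z)<a$, and $\det T=a-N(y)-N(z)>0$; note the last inequality already subsumes the first two, so the effective constraint on $(y,z)\in\frak o'\times\frak o'$ is simply $N(y)+N(z)<a$.

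With $x=0$ the index $T$ is exactly of the form treated in Proposition \ref{Ikeda1}, so $A_{F_f}(T)=a_f(a-N(y)-N(z))$. Substituting and reparametrizing by $n:=a-N(y)-N(z)$, which ranges over $1\le n\le a$ as $(y,z)$ varies subject to $N(y)+N(z)<a$ (the value $n=a$ coming from $y=z=0$), I would collect the pairs sharing a common value of $n$ and read off
$$A_{F|_{Sp_6}}(D_a)=\sum_{n=1}^{a}a_f(n)\,\#\{(y,z)\in\frak o'\times\frak o'\mid N(y)+N(z)=a-n\},$$
which is the asserted identity, the stated conditions $N(y)<a$ and $N(z)<a$ being automatic once $N(y)+N(z)=a-n<a$.

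I expect no serious obstacle: the argument is bookkeeping on top of Lemma \ref{someformula} and Proposition \ref{Ikeda1}. The one point demanding care is the reduction $x=0$, which rests on two facts that must be quoted precisely, namely that $T_1=D_a$ forces the off-diagonal octonions to be purely imaginary (hence in $\frak o'$) and that a nonzero element of $\frak o'$ has norm $\ge 1$. Both are immediate, so the remaining work is merely tracking the positivity inequalities and confirming that $\det T>0$ implies the two minor conditions.
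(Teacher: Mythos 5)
Your proposal is correct and follows essentially the same route as the paper: restrict to positive-definite $T$ by cuspidality, force $x=0$ from the minor condition $1-N(x)>0$, compute $\det T=a-N(y)-N(z)$, and invoke Proposition \ref{Ikeda1} before regrouping by $n=\det T$. You merely make explicit a few steps the paper leaves implicit (the reduction to $\frak J(\Z)_{>0}$ and the reason $x$ must vanish), which is fine.
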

\begin{proof} Consider $T\in \frak J(\Bbb Z)_{>0}$ such that $T_1=\diag(1,1,a)$. Then $T=\begin{pmatrix}
1& 0 & y\\ 
0&1& z\\
-y&-z& a
 \end{pmatrix}$, where $y,z\in\frak o'$ such that $N(y)<a, N(z)<a$. Also $\det(T)=a-N(y)-N(z)$. The assertion follows from Proposition \ref{Ikeda1}.
\end{proof}

\begin{prop} Let $F=F_f$ be the Ikeda type lift defined in $($\ref{Ikeda-lift}$)$. 
Let $G=\begin{pmatrix} 1&\frac 12& 0\\ \frac 12&2&0\\ 0&0&2\end{pmatrix}$.
Then $A_{F|_{Sp_6}}(G)=56a_f(2)+ 11088$.
\end{prop}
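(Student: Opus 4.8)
The plan is to enumerate directly all $T\in\frak J(\Z)_{>0}$ with $T_1=G$ and to evaluate $A_{F_f}(T)$ for each one using Proposition \ref{Ikeda}. Since $F_f$ is a cusp form, only rank-three (positive definite) indices contribute to $A_{F|_{Sp_6}}(G)$, so writing out the octonionic entries forces
$$T=\begin{pmatrix} 1 & \frac{1}{2}+x & y\\ \frac{1}{2}+\bar x & 2 & z\\ \bar y & \bar z & 2\end{pmatrix},\qquad \frac{1}{2}+x\in\frak o,\ y,z\in\frak o'.$$
First I would impose the positivity criterion of Section \ref{excep}. The minor condition $ab-N(\frac{1}{2}+x)=2-N(\frac{1}{2}+x)>0$ forces $N(\frac{1}{2}+x)=1$: the $e_0$-coefficient $\frac{1}{2}\neq 0$ gives $N(\frac{1}{2}+x)=\frac14+N(x)\ge 1$, while $N$ is integer-valued on $\frak o$, so the only integer in $[1,2)$ is $1$. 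By Proposition \ref{1/2} there are exactly $56$ such $x$. Likewise $ac-N(y)>0$ and $bc-N(z)>0$ give $N(y)\in\{0,1\}$ and $N(z)\in\{0,1,2,3\}$.

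Next I would compute the determinant. Using the formula of Section \ref{excep} with $a=1$, $b=c=2$, and $N(\frac{1}{2}+x)=1$,
$$\det T=2-N(z)-2N(y)+{\rm Tr}\big(((\tfrac{1}{2}+x)z)\bar y\big).$$
The key tool is the identity ${\rm Tr}(u\bar v)=2\langle u,v\rangle$ for the bilinear form $\langle\cdot,\cdot\rangle$ attached to $N$ (Corollary \ref{Trace}), together with the composition law $N(uv)=N(u)N(v)$; these yield the Cauchy--Schwarz bound $|{\rm Tr}(((\frac{1}{2}+x)z)\bar y)|\le 2\sqrt{N(y)N(z)}$. Running over the admissible pairs $(N(y),N(z))$ and using that $\det T$ is a positive integer, this bound eliminates every case except three: (A) $y=z=0$, where $\det T=2$; (B) $y=0$, $N(z)=1$, where $\det T=1$; and (C) $N(y)=N(z)=1$ with ${\rm Tr}(((\frac{1}{2}+x)z)\bar y)=2$, where $\det T=1$. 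In case (C), equality in Cauchy--Schwarz for two norm-one octonions forces $(\frac{1}{2}+x)z=y$, so $y$ is determined by $(x,z)$ and the surviving condition is simply that $(\frac{1}{2}+x)z$ be imaginary.

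Finally I would evaluate and count. In all three cases $\det T\in\{1,2\}$ is square free, so Proposition \ref{Ikeda} gives $A_{F_f}(T)=a_f(\det T)$; thus case (A) contributes $a_f(2)$ and cases (B), (C) each contribute $a_f(1)=1$. Case (A) supplies the $56$ choices of $x$, hence $56\,a_f(2)$. Case (B) supplies the $56$ choices of $x$ times the $N_{\rm ioc}(1)=126$ choices of $z$, i.e.\ $56\cdot 126=7056$ terms. Case (C) is exactly the number of pairs $(x,z)$ with $\frac{1}{2}+x\in\frak o$, $z\in\frak o'$, both of norm one, and $(\frac{1}{2}+x)z$ imaginary; replacing $\frac{1}{2}+x$ by its conjugate $\frac{1}{2}+\bar x$ (a bijection on admissible entries, since conjugation preserves $\frak o$ and $N$) identifies this with Proposition \ref{1/2-2}, giving $4032$ terms. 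Summing, $A_{F|_{Sp_6}}(G)=56\,a_f(2)+7056+4032=56\,a_f(2)+11088$. The main obstacle is case (C): one must see that positivity of the integral determinant forces the Cauchy--Schwarz equality $(\frac{1}{2}+x)z=y$, thereby collapsing the triple count to a pair count, and then match that pair count to the Mathematica-assisted enumeration of Proposition \ref{1/2-2} via the conjugation bijection; cases (A) and (B) are routine bookkeeping once the positivity constraints and the determinant formula are in place.
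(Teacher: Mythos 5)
Your proof is correct and reaches the same three-case decomposition and the same counts as the paper, but the mechanism for the key step differs. The paper determines $\det T$ and the constraint in your case (C) by integral row reduction: it applies an element of $\bold M'(\Bbb Z)$ to bring $T$ to the form $\begin{pmatrix} 1&0&\ast\\ 0&1&z-(\frac 12+\bar x)y\\ \ast&\ast&2\end{pmatrix}$, reads off $\det T=2-N(y)-N(z-(\frac 12+\bar x)y)$, and when $N(y)=1$ concludes $z=(\frac 12+\bar x)y$ directly from positivity; this also makes the $\Bbb Z_p$-equivalence to $\diag(1,1,\det T)$ (needed for Proposition \ref{Ikeda1}) immediate. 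You instead expand $\det T$ from the Jordan-algebra determinant formula and use the trace inequality of Corollary \ref{ineq} together with the equality case in Corollary \ref{Trace} to force $(\frac 12+x)z=y$ --- which is the method the paper itself deploys in Appendix A for $\diag(2,2,2)$, and is equivalent to the paper's constraint via the alternative law $(\frac 12+x)((\frac 12+\bar x)y)=N(\frac 12+x)y$. The row-reduction route buys a cleaner justification that Proposition \ref{Ikeda} applies (the $T$ in question are visibly $\Bbb Z_p$-equivalent to $\diag(1,1,\det T)$) and lands on the exact parametrization of Proposition \ref{1/2-2} without your conjugation bijection $x\mapsto -x$; your route avoids exhibiting the equivalence explicitly and generalizes more mechanically to indices where no convenient reduction is apparent. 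Both are complete; the only point to make sure of in your version is that $A_{F_f}(T)=a_f(\det T)$ for squarefree $\det T$ is legitimately applied to non-diagonal $T$, which holds because every $T$ is $\Bbb Z_p$-equivalent to a diagonal matrix and the local factors depend only on that class.
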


\begin{proof}
 Let $T=\begin{pmatrix} 1&\frac 12+x& y\\ \frac 12+\bar x&2&z\\ \bar y&\bar z&2\end{pmatrix}\in \mathcal J(\Bbb Z)_{>0}$, where $y,z$ are imaginary integral octonions, and $x$ is an imaginary octonion such that $\frac 12+x\in\frak o$.
Since $2-N(\frac 12+x)>0$ and $\frac 12+x\ne 0$, $N(\frac 12+x)=1$. Then $T$ is equivalent over $\Bbb Z$ to $\begin{pmatrix} 1&0&\bar y\\ 0&1&z-(\frac 12+\bar x)y\\ \bar y&\bar z-\bar y(\frac 12+x)&2\end{pmatrix}$, and it is equivalent to $\begin{pmatrix} 1&0&0\\ 0&1&0\\ 0&0&2-N(y)-N(z-(\frac 12+\bar x)y))\end{pmatrix}$.

If $N(y)=0$, $y=0$ and $N(z)=0$ or 1. If $z=0$, $A_F(T)=a_f(2)$. 

If $N(z)=1$, $A_F(T)=1$. There are $56\times 126$ pairs $(x,z)$.

If $N(y)=1$, since $2-N(y)-N(z-(\frac 12+\bar x)y)>0$, $N(z-(\frac 12+\bar x)y)=0$. Hence $z=(\frac 12+\bar x)y$. Therefore, in this case, 
$A_F(T)=1$. By Proposition \ref{1/2-2}, there are 4032 such pairs $(x,y)$.
\end{proof}

The following proposition is proved by using the data in Appendix A. 
\begin{prop}\label{coef222} Let $F=F_f$ be the Ikeda type lift defined in 
$($\ref{Ikeda-lift}$)$. 
Put $H=\diag(2,2,2)\in {\rm Sym}^3(\Z)_{>0}$. 
Then \begin{eqnarray*}
A_{F|_{Sp_6}}(H)&=&a_f(2)^3+ 270\cdot 2^{2k-9} a_f(2)+2268 (a_f(4)+2^{2k-5})
+378 a_f(6)+ 55188 a_f(4)\\
&&+459648 a_f(3)+ 3752952 a_f(2)+18192384.
\end{eqnarray*}
\end{prop}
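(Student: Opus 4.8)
The plan is to evaluate the inner sum in Lemma \ref{someformula} directly. Since $F=F_f$ is a cusp form, $A_F(T)=0$ unless $T$ has rank $3$, so
$$A_{F|_{Sp_6}}(H)=\sum_{T\in\frak J(\Z)_{>0}\atop T_1=\diag(2,2,2)}A_F(T).$$
The requirement $T_1=\diag(2,2,2)$ forces the three diagonal entries of $T$ to be $2$ and the real parts of the three off-diagonal octonions to vanish, so every contributing $T$ is of the form
$$T=\begin{pmatrix}2&x&y\\ \bar x&2&z\\ \bar y&\bar z&2\end{pmatrix},\qquad x,y,z\in\frak o',$$
and the positivity criterion of Section \ref{excep} becomes $N(x),N(y),N(z)<4$ together with $\det T>0$, where $\det T=8-2\bigl(N(x)+N(y)+N(z)\bigr)+\mathrm{Tr}\bigl((xz)\bar y\bigr)$.

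First I would group the terms according to the value of $A_F(T)$, which depends only on the $\Z_p$-equivalence class of $T$ at each $p\mid\det T$. Because $N(x),N(y),N(z)\le 3$, a short estimate using $\mathrm{Tr}\bigl((xz)\bar y\bigr)\le 2\sqrt{N(x)N(y)N(z)}\le 2\bigl(N(x)+N(y)+N(z)\bigr)$ shows $1\le\det T\le 8$, with $\det T=8$ only for $x=y=z=0$. I would then read off $A_F(T)$ type by type: for the square-free values $\det T\in\{1,2,3,6\}$ Proposition \ref{Ikeda} gives $A_F(T)=a_f(\det T)$; for $\det T=8$ the unique matrix $\diag(2,2,2)$ gives $A_F(T)=a_f(2)^3+270\cdot 2^{2k-9}a_f(2)$ by Proposition \ref{Ikeda2} (note $2^8+2^4-2=270$); and for $\det T=4$ one must separate the two possible $\Z_2$-types, $\diag(1,1,4)$ giving $A_F(T)=a_f(4)$ by Proposition \ref{Ikeda} and $\diag(1,2,2)$ giving $A_F(T)=a_f(4)+2^{2k-5}$ by Proposition \ref{Ikeda3}.

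The rest is enumerative: for each type I would count the triples $(x,y,z)\in(\frak o')^3$ with $N(x),N(y),N(z)<4$ that yield a $T$ of that determinant and local type. This is precisely the tabulation carried out in Appendix A, from which the multiplicities $18192384,\,3752952,\,459648,\,55188,\,2268,\,378,\,1$ for the types $\det T=1,2,3$, $\diag(1,1,4)$, $\diag(1,2,2)$, $\det T=6$, and $\diag(2,2,2)$ respectively are extracted (the small configurations being cross-checked against the octonion counts of Section 3, e.g.\ Propositions \ref{1/2-2} and \ref{1/2-3}). Multiplying each count by the corresponding $A_F(T)$ and summing gives the asserted identity.

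The main obstacle is the local analysis at $p=2$ that distinguishes the two $\det T=4$ types: one must compute the content of $T$ and of its adjoint $T\times T$ from (\ref{JAP}) and invoke the $\Z_2$-diagonalization of elements of $\frak J$ to decide whether $T\sim\diag(1,1,4)$ or $T\sim\diag(1,2,2)$, which is exactly what pins down the coefficient $2268$ of the correction term $2^{2k-5}$. The enumeration itself is finite but sizable and is performed by machine, so the delicate point is the correctness of this local-type bookkeeping rather than the raw counting.
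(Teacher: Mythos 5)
Your proposal is correct and follows essentially the same route as the paper: the paper's proof likewise reduces $A_{F|_{Sp_6}}(H)$ to the sum over $T=\begin{pmatrix}2&x&y\\ \bar x&2&z\\ \bar y&\bar z&2\end{pmatrix}$ with $x,y,z\in\frak o'$, invokes the machine-assisted classification of Appendix A by the invariant $d(T)=(\gcd T,\gcd(T\times T),\det T)$ to get the multiplicities $18192384,3752952,459648,55188,2268,378,1$, and applies Propositions \ref{Ikeda}, \ref{Ikeda2}, \ref{Ikeda3} to the local types $(1,1,n)$, $(2,4,8)$, $(1,2,4)$ exactly as you do. Your identification of the $p=2$ bookkeeping separating $\diag(1,1,4)$ from $\diag(1,2,2)$ as the delicate point is precisely where Appendix A does its work.
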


\begin{proof} All $T\in \frak J(\Z)_{>0}$ such that $T_1=H$ are classified in Table 1 in Appendix A. 
Applying the formula (\ref{formula1}), and 
Propositions \ref{Ikeda}, \ref{Ikeda1}, \ref{Ikeda2}, \ref{Ikeda3} for each $T$ with $d(T)=(2,4,8)$, $(1,2,4)$, and $(1,1,n),\ 
n\in\{1,2,3,4,6\}$, respectively, the result follows. 
\end{proof}

\subsection{Weight 20 Ikeda type lift}
Now we consider the Ikeda type lift $F=F_\Delta$ of weight 20 from the Ramanujan $\Delta$ function, which is the weight 12 form with respect to $\SL_2(\Bbb Z)$: 
$$\Delta(z)=\eta(z)^{24}=q-24q^2+252q^3-1472q^4+4830q^5-6048q^6+\cdots. 
$$
Since $F|_{Sp_6}\in S_{20}(\Sp_6(\Bbb Z))$,
\begin{equation}\label{linear}
F|_{Sp_6}=c_1 f_6+c_2 f_7+c_3 f_8+c_4 f_9+c_5 f_{10}+c_6 f_{11}.
\end{equation}
Put $A(S)=A_{F|_{Sp_6}}(S),\ S\in {\rm Sym}^3(\Z)_{\ge 0}$ for simplicity. 

We plug in $S_1,W,D_1,D_2,G,H$, and obtain a rank 6 matrix from \cite{YPS}, where 
$$S_1=\begin{pmatrix} 1&\frac 12& \frac 12\\ \frac 12&1&\frac 12\\ \frac 12&\frac 12&1\end{pmatrix},\ 
W=\begin{pmatrix} 1&\frac 12& 0\\ \frac 12&1&0\\ 0&0&1\end{pmatrix},\ 
G=\begin{pmatrix} 1&\frac 12& 0\\ \frac 12&2&0\\ 0&0&2\end{pmatrix},\ 
H=\begin{pmatrix} 2&0& 0\\ 0&2&0\\ 0&0&2\end{pmatrix}.$$ 
Then $A(S_1)=A(W)=0$, and applying Proposition \ref{Da},  $A(D_1)=1$, $A(D_2)=228$, and $A(G)=9744$. Further, applying Proposition \ref{coef222} with $k=10$, we have 
$A(H)=18124416$.
By solving the linear system associated to the above matrix of rank 6,  we have

\begin{eqnarray*}
&&c_1=\frac {\beta-\gamma(b_2+b_3)+\delta b_2b_3}{\alpha(b_1-b_2)(b_1-b_3)},\quad c_2=\frac {-\beta+\gamma(b_1+b_3)-\delta b_1b_3}{\alpha(b_1-b_2)(b_2-b_3)},\quad c_3=\frac {\beta-\gamma (b_1+ b_2)+\delta b_1b_2}{\alpha(b_1-b_3)(b_2-b_3)} ,\\
&&c_4=\frac {\mu+ 9263 a}{\eta a},\quad 
c_5=\frac {-\mu+ 9263 a}{\eta a},\quad c_6=-\frac 1{\kappa},
\end{eqnarray*}
where 
\begin{eqnarray*}
&& \alpha=2^{50}\cdot 3^{32}\cdot 5^9\cdot 7^7\cdot 11^6\cdot 13^3\cdot 17^2\cdot 19\cdot 23\cdot 29\cdot 31\cdot 71\cdot 83\cdot 157\cdot 313\cdot 439367\cdot 249789005569\\
&& \gamma=5\cdot 11\cdot 19\cdot 59\cdot 2633441,\quad \delta=7 \cdot 337\cdot 2099,\\
&& \beta=5\cdot 11^2\cdot 109\cdot 77263111477,\quad \mu=1447\cdot 203953,\\
&& \eta=2^{48}\cdot 3^{22}\cdot 5^{14}\cdot 7^{9}\cdot 11^4\cdot 13^3\cdot 17\cdot 29\cdot 31\cdot 67\cdot 83^2\cdot 1699\cdot 2069\cdot 78803, \\
&& \kappa=2^{40}\cdot 3^{18}\cdot 5^{10}\cdot 7^2\cdot 11^2\cdot 13\cdot 17\cdot 29^2\cdot 31\cdot 83\cdot 157\cdot 271\cdot 1009.
\end{eqnarray*}
where $a=\sqrt{63737521}$, and
$b_1,b_2,b_3$ are roots of $x^3-54971x^2+893191104x-4382089113600=0$, approximately, 
$b_1\approx 9504, b_2\approx 15267, b_3\approx 30199$.

One can check for consistency by computing both sides of (\ref{linear}) for $D_3,D_4$.

\subsection{Restriction of Eisenstein series to $Sp_6$}

The restriction of Eisenstein series is more complicated: 
\begin{prop}\label{Da-Eis} For $D_a=\diag(1,1,a)$ with $a\in \Z_{>0}$, 
\begin{eqnarray*}
&& A_{E_{2k}|_{Sp_6}}(D_a)=C_{2k}\sum_{n=1}^a \sigma_{2k-9}(n) \#\{(y,z)\in \frak o'\times \frak o'\ |\ N(y)+N(z)=a-n,\ N(y)<a,\ N(z)<a\} \\ && +C_{2k}^{(2)}\#\{(y,z)\in \frak o'\times \frak o'\ |\ N(y)+N(z)=a\} \\ && +C_{2k}^{(2)} \sum_{n=1}^a \sigma_{2k-5}(n) \#\{(x,y,z)\in \frak o'\times \frak o'\times \frak o'\ |\ N(x)=1,\, a-N(y)=n,\, z=\bar x y \} \\ && +C_{2k}^{(1)}\#\{(x,y,z)\in \frak o'\times \frak o'\times \frak o'\ |\ N(x)=1,\, N(y)=a,\, z=\bar x y\}. \end{eqnarray*}
\end{prop}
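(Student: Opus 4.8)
The plan is to compute $A_{E_{2k}|_{Sp_6}}(D_a)$ straight from its definition in Lemma~\ref{someformula}, namely $A_{E_{2k}|_{Sp_6}}(D_a)=\sum_{T_1=D_a}\widetilde A_{2k}(T)$, by sorting the indices $T$ according to their rank and evaluating $\widetilde A_{2k}(T)$ with the rank $3$, rank $2$, rank $1$ formulas recalled in Section~4. Since $T_1=\diag(1,1,a)$ forces every off-diagonal scalar part to vanish, each such $T$ has the shape $T=\begin{pmatrix}1&x&y\\\bar x&1&z\\\bar y&\bar z&a\end{pmatrix}$ with $x,y,z\in\frak o'$ imaginary and integral. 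Applying the positive-semidefiniteness criterion of Section~\ref{excep} to the $(1,2)$-minor gives $1-N(x)\ge 0$, so the sum splits into the two cases $x=0$ and $N(x)=1$; it also gives $N(y)\le a$ and $N(z)\le a$.

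First I treat the rank $3$ contribution. A rank $3$ index is positive definite, so $1-N(x)>0$ and hence $x=0$, exactly as in the discussion preceding Proposition~\ref{Ikeda1}; there $\det T=a-N(y)-N(z)>0$, and $T$ is equivalent to $\diag(1,1,\det T)$ over every $\Z_p$, so Proposition~\ref{Eisen-rank3} yields $\widetilde A_{2k}(T)=C_{2k}\,\sigma_{2k-9}(\det T)$. Reindexing by $n=\det T$, with the (here redundant) $2\times2$-minor constraints $N(y),N(z)<a$, produces the first line of the asserted formula. For the rank $2$ indices with $x=0$ one has $\det T=0$, i.e.\ $N(y)+N(z)=a$; clearing the third row and column against the two unit pivots (an $\bold M'(\Z)$-reduction of the kind already used for the index $G$) shows $T\sim\diag(1,1,0)$, whence the content $\epsilon(T)=1$ and $\Delta(T\times T)=1$, so $A_{2k}(T)=\sigma_{2k-5}(1)=1$. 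Summing $C_{2k}^{(2)}$ over these $T$ gives the second line.

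The substantive case is $N(x)=1$. Here I clear the first row and column using the unit $(1,1)$-pivot; the new $(2,2)$-entry becomes $1-N(x)=0$ and the new $(2,3)$-entry becomes $z-\bar x y$, so $T$ is $\Z$-equivalent to $\begin{pmatrix}1&0&0\\0&0&z-\bar x y\\0&\overline{z-\bar x y}&a-N(y)\end{pmatrix}$. Positive semidefiniteness of the lower-right $2\times2$ block forces its determinant $-N(z-\bar x y)$ to be $\ge 0$, hence $z=\bar x y$ — this identity is the crux of the argument. Using $N(x)=1$ and the alternative law $x(\bar x y)=N(x)y$, one then checks $\det T=0$ and $T\sim\diag(1,0,a-N(y))$. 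If $N(y)<a$ this index has rank $2$ with $\epsilon(T)=1$ and $\Delta(T\times T)=a-N(y)$, so $A_{2k}(T)=\sigma_{2k-5}(a-N(y))$, giving (after setting $n=a-N(y)$) the third line; if $N(y)=a$ it has rank $1$ with $\Delta(T)=1$, so $A_{2k}(T)=\sigma_{2k-1}(1)=1$, giving the fourth line. Collecting the four contributions with the coefficients $C_{2k},C_{2k}^{(2)},C_{2k}^{(2)},C_{2k}^{(1)}$ yields the stated identity.

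The main obstacle will be the case $N(x)=1$: one must carry out the unipotent reduction carefully, deduce $z=\bar x y$ from semidefiniteness, and then correctly identify the invariants $\Delta(T)$ and $\Delta(T\times T)$ of the reduced diagonal forms so as to feed the right argument into $\sigma_{2k-5}$ and $\sigma_{2k-1}$; the octonionic identities ($x\bar x=N(x)$ and alternativity) needed here are elementary but must be handled with attention to non-associativity.
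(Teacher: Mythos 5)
Your proposal is correct and follows essentially the same route as the paper: split on $x=0$ versus $N(x)=1$ via the minor criterion $1-N(x)\ge 0$, reduce by the unipotent element (the paper's $m_{-xe_{12}}$) to force $z=\bar x y$ from semidefiniteness, and then read off $\widetilde A_{2k}(T)$ from the rank $3$, $2$, $1$ formulas applied to $\diag(1,1,a-N(y)-N(z))$, $\diag(1,1,0)$, $\diag(1,a-N(y),0)$, and $\diag(1,0,0)$ respectively. The only difference is that you spell out the identifications $\epsilon(T)=1$ and $\Delta(T\times T)=a-N(y)$, which the paper leaves implicit.
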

\begin{proof} Consider $T\in \frak J(\Bbb Z)_{\geq 0}$ such that $T_1=\diag(1,1,a)$. Then $T=\begin{pmatrix} 1& x & y\\ \bar x&1& z\\ \bar y&\bar z& a \end{pmatrix}$, where $x,y,z\in\frak o'$. Then $N(x)\leq 1$. If $x=0$, as in the proof of Proposition \ref{Ikeda1}, $\det(T)=a-N(y)-N(z)$, and $T$ is equivalent to $\diag(1,1,\det(T))$ over $\Bbb Z_p$. If $\det(T)\ne 0$, then $A_{2k}(T)=\sigma_{2k-9}(\det(T))$. If $\det(T)=0$, $A_{2k}(T)=1$. 

If $N(x)=1$, $m_{-x e_{12}}\cdot T=\begin{pmatrix} 1&0 & y\\ 0&0& z-\bar x y\\ \bar y&\bar z-\bar y x&a \end{pmatrix}$, where $m_{-x e_{12}}$ is as in \cite[p. 228]{KY}. Since $m_{-x e_{12}}\cdot T\in \frak J(\Bbb Z)_{\geq 0}$, $z-\bar x y=0$. Hence $T$ is equivalent to $\diag(1,a-N(y),0)$ over $\Bbb Z_p$. If $N(y)<a$, then $A_{2k}(T)=\sigma_{2k-5}(a-N(y))$. If $N(y)=a$, then $T$ is equivalent to $\diag(1,0,0)$ over $\Bbb Z_p$.
\end{proof}

Here the difficulty is to compute the number of pairs $(x,y)\in\frak o'\times\frak o'$ such that $\bar xy\in\frak o'$. However, if $a=1$, we can use Proposition \ref{1/2-3}.

 \begin{prop} $A_{E_{2k}|_{Sp_6}}(\diag(1,1,1))=C_{2k} + 378 C_{2k}^{(2)} + 7560 C_{2k}^{(1)}$.
\end{prop}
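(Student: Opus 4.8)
The plan is to specialize Proposition \ref{Da-Eis} to the single index $a=1$ and then evaluate the four combinatorial counts that appear in its statement. Because each of those counts degenerates drastically when $a=1$, no new ideas are required beyond the octonion-counting results already established; the whole argument reduces to careful bookkeeping together with one direct appeal to Proposition \ref{1/2-3}.

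Setting $a=1$, the first sum runs only over $n=1$, where $\sigma_{2k-9}(1)=1$, and the condition $N(y)+N(z)=0$ together with $N(y),N(z)<1$ forces $y=z=0$; hence this term contributes exactly $C_{2k}$. For the second term I would count the pairs $(y,z)\in\frak o'\times\frak o'$ with $N(y)+N(z)=1$: since the norm of an imaginary integral octonion is a nonnegative integer, one of $y,z$ has norm $1$ and the other is $0$, so using $N_{\rm ioc}(1)=126$ this count equals $2\cdot 126=252$. For the third term the sum again collapses to $n=1$ with $\sigma_{2k-5}(1)=1$; here $a-N(y)=1$ forces $N(y)=0$, so $y=0$ and then $z=\bar x y=0$, leaving only the $126$ choices of $x\in\frak o'$ with $N(x)=1$. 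Thus the second and third terms together give $(252+126)C_{2k}^{(2)}=378\,C_{2k}^{(2)}$.

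The only substantive count is the last term, namely the number of triples $(x,y,z)\in(\frak o')^3$ with $N(x)=N(y)=1$ and $z=\bar x y$. Requiring $z\in\frak o'$ is precisely the requirement that $\bar x y$ be an imaginary integral octonion, so this is exactly the quantity computed in Proposition \ref{1/2-3}, equal to $7560$. Summing the four contributions then yields $C_{2k}+378\,C_{2k}^{(2)}+7560\,C_{2k}^{(1)}$, as claimed. I do not anticipate a genuine obstacle here: the one place that needs care is recognizing that the fourth term reproduces the count of Proposition \ref{1/2-3} verbatim, and that the degenerate subcases $y=0$ or $z=0$ in the earlier terms are correctly handled by the values $N_{\rm ioc}(0)=1$ and $N_{\rm ioc}(1)=126$.
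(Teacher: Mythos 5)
Your proposal is correct and follows essentially the same route as the paper: specialize Proposition \ref{Da-Eis} to $a=1$, observe that the first three terms collapse to $C_{2k}+ (252+126)C_{2k}^{(2)}$, and identify the last count with the $7560$ of Proposition \ref{1/2-3}. The paper's proof is just a terser version of the same bookkeeping.
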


 \begin{proof} By the above proposition, 
 \begin{eqnarray*}
 A_{E_{2k}|_{Sp_6}}(D_1)=C_{2k} + 126\times 2 C_{2k}^{(2)} +126 C_{2k}^{(2)} +C_{2k}^{(1)}\#\{(x,y)\in \frak o'\times \frak o'\ |\ N(x)=1,\, N(y)=1,\, \bar x y\in \frak o'\}.\end{eqnarray*}

By Proposition \ref{1/2-3},
$$\#\{(x,y)\in \frak o'\times \frak o'\ |\ N(x)=1,\, N(y)=1,\, \bar x y\in \frak o'\}=7560.
$$
\end{proof}

Let 
$$u_2=\diag(1,0,0),\quad u_4=\diag(1,1,0),\quad u_6=\begin{pmatrix} 1&\frac 12&0\\ \frac 12&1&0\\ 0&0&0\end{pmatrix},\quad W=\begin{pmatrix} 1&\frac 12&0\\ \frac 12&1&0\\ 0&0&1\end{pmatrix}.
$$  
For simplicity, let $A(S)=A_{E_{2k}|_{Sp_6}}(S)$.

\begin{lemma} $A(u_2)=C_{2k}^{(1)}$, $A(u_4)=C_{2k}^{(2)}+126 C_{2k}^{(1)}$, $A(u_6)=56 C_{2k}^{(1)}$, and $A(W)=56C_{2k}^{(2)}+ 4032 C_{2k}^{(1)}$. 
\end{lemma}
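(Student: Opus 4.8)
The plan is to compute each coefficient $A(S)=A_{E_{2k}|_{Sp_6}}(S)=\sum_{T\in\frak J(\Z)_{\ge 0},\,T_1=S}\widetilde A_{2k}(T)$ by enumerating, for each of the four matrices $S\in\{u_2,u_4,u_6,W\}$, every $T\in\frak J(\Z)_{\ge 0}$ with $T_1=S$, determining its rank, and then inserting the rank-one and rank-two formulas for $\widetilde A_{2k}(T)$ from Section~4.1. Writing $T=\begin{pmatrix} a&x&y\\ \bar x&b&z\\ \bar y&\bar z&c\end{pmatrix}$, the condition $T_1=S$ fixes $a,b,c$ and the real parts of $x,y,z$, while the criterion of Section~\ref{excep} bounds $N(x),N(y),N(z)$. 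A structural point worth isolating first is that for all four $S$ every admissible $T$ turns out to have $\det(T)=0$, so only rank-one and rank-two terms occur; this explains the absence of any $C_{2k}$ term. For $u_2=\diag(1,0,0)$ the vanishing of $b$ and $c$ forces $x=y=z=0$, so the only fill-in is $T=u_2$, which is rank one with $\Delta(T)=1$, giving $A(u_2)=C_{2k}^{(1)}$.

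For $u_4=\diag(1,1,0)$ the vanishing of $c$ again forces $y=z=0$, while $x$ is imaginary with $N(x)\le 1$. The choice $x=0$ gives the single rank-two matrix $\diag(1,1,0)$, for which $\epsilon(T)=1$ and $\Delta(T\times T)=1$, hence $\widetilde A_{2k}=C_{2k}^{(2)}$; the $126=N_{\rm ioc}(1)$ choices with $N(x)=1$ each give a rank-one $T$, which the elementary $\bold M'(\Z)$-operations $m_{\bullet e_{ij}}$ used in the proof of Proposition~\ref{Da-Eis} reduce to $\diag(1,0,0)$, so $\Delta(T)=1$ and each contributes $C_{2k}^{(1)}$. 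Summing gives $A(u_4)=C_{2k}^{(2)}+126\,C_{2k}^{(1)}$. For $u_6$ the vanishing of $c$ once more kills $y$ and $z$, while $x$ has real part $\tfrac12$ and $N(x)\le 1$, forcing $N(x)=1$; by Proposition~\ref{1/2} there are exactly $56$ such $x$, each giving a rank-one $T\cong\diag(1,0,0)$, whence $A(u_6)=56\,C_{2k}^{(1)}$.

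The substantive case is $W$, where $c=1$ and $N(x)=1$ is again forced while $N(y),N(z)\in\{0,1\}$. Evaluating $\det(T)=-N(y)-N(z)+{\rm Tr}((xz)\bar y)$, the mixed cases $\{N(y),N(z)\}=\{0,1\}$ give $\det(T)=-1<0$ and are excluded. The case $y=z=0$ yields $56$ rank-two matrices (Proposition~\ref{1/2} again), each with $\epsilon(T)=1$ and $\Delta(T\times T)=1$, hence $\widetilde A_{2k}=C_{2k}^{(2)}$, contributing the term $56\,C_{2k}^{(2)}$. The hard subcase is $N(y)=N(z)=1$: here $N((xz)\bar y)=N(x)N(z)N(y)=1$, so $|{\rm Tr}((xz)\bar y)|\le 2$, and $\det(T)\ge 0$ forces ${\rm Tr}((xz)\bar y)=2$, i.e. $(xz)\bar y=1$; right-multiplying by $y$ and using the alternative identity $(w\bar y)y=w\,N(y)$ gives the equivalent condition $y=xz$. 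One then checks via $\bar x(xz)=N(x)z$ and $(xz)\bar z=x\,N(z)$ that $T\times T=0$, so $T$ is rank one, and a two-step $\bold M'(\Z)$-reduction clearing the first column sends $T$ to $\diag(1,0,0)$, giving $\Delta(T)=1$ and $\widetilde A_{2k}=C_{2k}^{(1)}$.

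It remains to count the admissible pairs in this last subcase: $y=xz$ is determined, and since $N(xz)=1$ automatically, we need only that $xz$ be imaginary. Because $x$ has real part $\tfrac12$ and $z$ is imaginary, one has ${\rm Re}(xz)=-{\rm Re}(\bar x z)$, so the condition ``$xz$ imaginary'' coincides with the condition ``$\bar x z$ imaginary'' counted in Proposition~\ref{1/2-2}; that proposition gives $4032$ pairs, each producing one rank-one $T$, hence the term $4032\,C_{2k}^{(1)}$. Adding the two surviving contributions yields $A(W)=56\,C_{2k}^{(2)}+4032\,C_{2k}^{(1)}$. The principal obstacle throughout is precisely this final subcase: extracting the sharp constraint $y=xz$ from the determinant inequality and correctly matching the resulting octonion count to Proposition~\ref{1/2-2}.
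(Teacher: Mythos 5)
Your proof is correct and follows essentially the same route as the paper: enumerate the fill-ins $T$ with $T_1=S$, use the positivity criterion to pin down the norms, identify each $T$'s equivalence class, and apply the rank-one/rank-two coefficient formulas together with Propositions \ref{1/2} and \ref{1/2-2}. The only local difference is in the $W$ case, where you extract the constraint from $\det(T)=0$ via the equality case of Cauchy--Schwarz (getting $y=xz$) and then translate to the condition of Proposition \ref{1/2-2}, whereas the paper obtains the equivalent condition $z=(\tfrac12+\bar x)y$ directly by clearing the $(1,2)$ entry with $m_{-(\frac12+x)e_{12}}$ and invoking semidefiniteness; both are valid.
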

\begin{proof} If $T=\begin{pmatrix} 1&x&y\\ \bar x&0&z\\ \bar y&\bar z&0\end{pmatrix}\in \frak J(\Bbb Z)_{\geq 0}$ such that $T_1=u_2$, then $x=y=z=0$. Hence $A(u_2)=C_{2k}^{(1)}$.

 Suppose $T=\begin{pmatrix} 1&x&y\\ \bar x&1&z\\ \bar y&\bar z&0\end{pmatrix}$ such that $T_1=u_4$ so that $x,y,z$ are imaginary integral octonions. Since $T\in \frak J(\Bbb Z)_{\geq 0}$, $y=z=0$, and $N(x)\leq 1$. If $x=0$, then $T=\diag(1,1,0)$. In that case, $A(\diag(1,1,0))=C_{2k}^{(2)}$. If $N(x)=1$, then $T$ is equivalent to $\diag(1,0,0)$ over $\Bbb Z_p$. Hence $A(u_4)=C_{2k}^{(2)}+126 A(u_2)$.

Suppose $T=\begin{pmatrix} 1&\frac 12+x&y\\ \frac 12+\bar x&1&z\\ \bar y&\bar z&0\end{pmatrix}\in \frak J(\Bbb Z)_{\geq 0}$ such that $T_1=u_6$ so that $y,z$ are imaginary integral octonions and $x$ is an imaginary octonion such that $\frac 12+x\in\frak o$. Since $T\in \frak J(\Bbb Z)_{\geq 0}$, $y=z=0$, and $N(\frac 12+x)\leq 1$. Since $N(\frac 12+x)\ne 0$, $N(\frac 12+x)=1$. Then $\begin{pmatrix} 1&\frac 12+x&0\\ \frac 12+\bar x&1&0\\ 0&0&0\end{pmatrix}$ is equivalent to $\diag(1,0,0)$ over $\Bbb Z_p$. Now use Proposition \ref{1/2}.

 Suppose $T=\begin{pmatrix} 1&\frac 12+x&y\\ \frac 12+\bar x&1&z\\ \bar y&\bar z&1\end{pmatrix}\in \frak J(\Bbb Z)_{\geq 0}$ such that $T_{1}=W$ so that $y,z$ are imaginary integral octonions and $x$ is an imaginary octonion such that $\frac 12+x\in\frak o$. As above, $N(\frac 12+x)=1$, and $z-(\frac 12+\bar x)y=0$. Then $T$ is equivalent to $\diag(1,1-N(y),0)$ over $\Bbb Z_p$. If $y=0$, $z=0$, and $A(T)=C_{2k}^{(2)}$.
If $N(y)=1$, $T$ is equivalent to $\diag(1,0,0)$ over $\Bbb Z_p$, and by Proposition \ref{1/2-2},
the number of pairs $(x,y)$ such that $x$ is an imaginary octonion, and $\frac 12+x\in\frak o, N(\frac 12+x)=1$, and $y\in \frak o'$, and
$(\frac 12+\bar x)y\in \frak o'$, is 4032.
\end{proof}

\begin{prop} Let $S_1=\begin{pmatrix} 1&\frac 12& \frac 12\\ \frac 12&1&\frac 12\\ \frac 12&\frac 12& 1 \end{pmatrix}$.
Then $A_{E_{2k}|_{Sp_6}}(S_1)=1512 C_{2k}^{(1)}.$
 \end{prop}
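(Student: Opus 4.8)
The plan is to enumerate all $T\in\frak J(\Bbb Z)_{\ge 0}$ with $T_1=S_1$, show that each such $T$ has rank $1$ with $\Delta(T)=1$ (so that $\widetilde A_{2k}(T)=C_{2k}^{(1)}$), and then count them with Proposition \ref{1/2-1}. First I would write a general $T$ with $T_1=S_1$ as
$$T=\begin{pmatrix} 1 & u & v\\ \bar u & 1 & w\\ \bar v & \bar w & 1\end{pmatrix},$$
where $u,v,w\in\frak o$ each have $e_0$-coefficient $\frac12$, i.e. $u=\frac12+x$, $v=\frac12+y$, $w=\frac12+z$ with $x,y,z$ imaginary. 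The positive semidefiniteness criterion of Section \ref{excep} gives $1-N(u)\ge 0$, $1-N(v)\ge 0$, $1-N(w)\ge 0$; since $N(u)=\frac14+N(x)$ is a positive integer, this forces $N(u)=N(v)=N(w)=1$.

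Next I would exploit the determinant condition. Using the formula for $\det$ from Section \ref{excep} together with $N(u)=N(v)=N(w)=1$, one gets $\det(T)=-2+{\rm Tr}((uw)\bar v)$. The key input is the identity ${\rm Tr}(p\bar q)=2\langle p,q\rangle$ (polarizing $N$) combined with Cauchy--Schwarz, which yields ${\rm Tr}((uw)\bar v)=2\langle uw,v\rangle\le 2\sqrt{N(uw)N(v)}=2$, with equality if and only if $uw=v$. Since $T\in\frak J(\Bbb Z)_{\ge 0}$ demands $\det(T)\ge 0$, i.e. ${\rm Tr}((uw)\bar v)\ge 2$, equality must hold, so $v=uw$, equivalently $w=\bar u v$. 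Substituting this into the formula for $T\times T$ from Section \ref{excep} makes every entry vanish, so each admissible $T$ has rank $1$; in particular there are no rank $2$ or rank $3$ contributions.

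For the weight of the Fourier coefficient I would check that $\Delta(T)=1$. Since $N(u)=1$ and $w=\bar u v$, the same shear $m_{-ue_{12}}$ used in the proof of Proposition \ref{Da-Eis} clears the $(1,2)$ entry, kills the $(2,2)$ entry, and sends the $(2,3)$ entry $w\mapsto w-\bar u v=0$; a further shear clears the $(1,3)$ entry, showing $T$ is equivalent over $\Bbb Z_p$ to $\diag(1,0,0)$. Hence $\Delta(T)=1$ and $\widetilde A_{2k}(T)=C_{2k}^{(1)}\sigma_{2k-1}(1)=C_{2k}^{(1)}$. Finally, the assignment $T\mapsto(u,v)$ identifies the set of admissible $T$ with the set of pairs $\big(\tfrac12+x,\tfrac12+y\big)$ of norm-$1$ octonions with $e_0$-coefficient $\tfrac12$ for which $w=\bar u v=\big(\tfrac12+\bar x\big)\big(\tfrac12+y\big)$ again has $e_0$-coefficient $\tfrac12$ (integrality and $N(w)=1$ being automatic from $\frak o$ being an order and $N$ multiplicative). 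This is precisely the set counted in Proposition \ref{1/2-1}, of size $1512$, and summing $\widetilde A_{2k}(T)=C_{2k}^{(1)}$ gives $A_{E_{2k}|_{Sp_6}}(S_1)=1512\,C_{2k}^{(1)}$.

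The main obstacle is the second paragraph: converting the determinant inequality $\det(T)\ge 0$ into the sharp rank-$1$ condition via the trace--inner-product identity and the equality case of Cauchy--Schwarz, and then checking that the resulting enumeration matches Proposition \ref{1/2-1} verbatim, including the conjugate appearing in $\bar u v$.
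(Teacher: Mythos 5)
Your proof is correct and follows essentially the same route as the paper: force $N(\frac12+x)=N(\frac12+y)=N(\frac12+z)=1$, deduce $\frac12+z=(\frac12+\bar x)(\frac12+y)$, observe each such $T$ is rank one and equivalent to $\diag(1,0,0)$ over $\Z_p$, and count via Proposition \ref{1/2-1}. The only (harmless) variation is that you derive the relation $w=\bar u v$ from $\det(T)\ge 0$ together with the equality case of Cauchy--Schwarz (Corollary \ref{Trace}), whereas the paper obtains it from semidefiniteness after the shear $m_{-xe_{12}}$ as in Proposition \ref{Da-Eis}; both are valid.
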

\begin{proof}
Consider
 $T=\begin{pmatrix} 1&\frac 12+x & \frac 12+y\\ \frac 12+\bar{x}&1&\frac 12+z\\ \frac 12+\bar y&\frac 12+\bar z&1\end{pmatrix}\in\frak J(\Bbb Z)_{\geq 0}$, where 
$x,y,z$ are imaginary octonions such that $\frac 12+x,\frac 12+y, \frac 12+z\in\frak o$. Then $N(\frac 12+x)\leq 1$. Since $N(\frac 12+x)\ne 0$, $N(\frac 12+x)=1$. Also $N(\frac 12+y)=1$, and $\frac 12+z=(\frac 12+\bar x)(\frac 12+y)$.
Then $T$ is equivalent to $\diag(1,0,0)$ over $\Bbb Z_p$. By Proposition \ref{1/2-1}, the number of such pairs $(x,y)$ is 1512.
\end{proof}

Now dim $M_{2k}(\Sp_6(\Z))=1$ for $2k=4,6,8$, generated by the Siegel Eisenstein series \cite{Ts}. Hence
for $2k=4,8$, $E_{2k}|_{Sp_6}=E_{2k}^{(3)}$ if we normalize the constant term of $E_{2k}^{(3)}$ to be one.

Recall that there are no holomorphic Eisenstein series of weight 6 and 10 on the exceptional domain \cite{Kim}.

\subsubsection{Weight 12}

 We have dim$M_{12}(\Gamma_3)=4$; $f_1$ is the Siegel Eisenstein series; $f_2,f_3$ are Klingen Eisenstein series; $f_4$ is the Miyawaki lift of type I. We plug in $O,u_2,u_6$ and $D_1$. Note that $A(D_1)=\frac {2^7\cdot 3^4\cdot 5^3\cdot 7\cdot 13^3}{691}$. So 
\begin{equation} \label{E12}
E_{12}|_{Sp_6}=c_1 f_1+c_2f_2+c_3f_3+c_4f_4, 
\end{equation}
 where $c_1=\frac {2^{13}\cdot 3^7\cdot 5^3\cdot 7^2\cdot 11\cdot 13\cdot 19\cdot 23}{131\cdot 283\cdot 593\cdot 617\cdot 691\cdot 43867}$, $c_2=0$, and $c_3=\frac {3\cdot 7\cdot 11\cdot 13\cdot 557}{5^2\cdot 131\cdot 593\cdot 691\cdot 43867}$, and $c_4=\frac {13}{2^6\cdot 3^5\cdot 5^2\cdot 17\cdot 691}$.

One can check for consistency by computing both sides of (\ref{E12}) for $D_2,W$.

\subsubsection{Weight 14}

 We have dim$M_{14}(\Gamma_3)=3$; $f_1$ is the Siegel Eisenstein series; $f_2$ is Klingen Eisenstein series; $f_3$ is the conjectural Miyawaki lifts of type II. We plug in $O,u_6$, and $D_1$. Note that $A(D_1)=-979776$.
\begin{equation} \label{E14}
E_{14}|_{Sp_6}=c_1 f_1+c_2f_2+c_3f_3,
\end{equation}
 where $c_1=\frac {2^{13}\cdot 3^5\cdot 5\cdot 7\cdot 13\cdot 23}{103\cdot 131\cdot 593\cdot 657931\cdot 2294797}$, $c_2=-\frac {1121}{3\cdot 5^3\cdot 7\cdot 131\cdot 593\cdot 691\cdot 657931}$, and $c_3=\frac {11}{2^7\cdot 5^4\cdot 7^2\cdot 43\cdot 691}$.

One can check for consistency by computing both sides of (\ref{E14}) for $D_2,W$.

 \subsubsection{Weight 16}

 We have dim$M_{16}(\Gamma_3)=7$; $f_1$ is the Siegel Eisenstein series; $f_2,f_3,f_4$ are Klingen Eisenstein series; $f_5,f_6$ are the Miyawaki lifts of type I; $f_7$ is the genuine form which is not a lift. We plug in $O,u_2,u_6,u_4, W, S$ and $D_1$. Note that $A(D_1)=\frac {2^9\cdot 3^7\cdot 5^2\cdot 7^2\cdot 17\cdot 43}{691\cdot 3617}$, and $A(u_2)=C_{16}^{(1)}=\frac {16320}{3617}$. Then
\begin{equation}\label{E16}
E_{16}|_{Sp_6}=c_1 f_1+c_2f_2+c_3f_3+c_4f_4+c_5f_5+c_6f_6+c_7f_7,
\end{equation}
where 
$$c_1=\frac {2^{15}\cdot 3^5\cdot 5^2\cdot 7\cdot 11\cdot 17\cdot 29\cdot 31}{1721\cdot 3617\cdot 9349\cdot 362903\cdot 657931\cdot 1001259881},
\quad c_2=0,
$$
$$c_3=\frac {\alpha+\beta a}{\gamma a},\quad c_4=\frac {-\alpha+\beta a}{\gamma a},
$$
$$c_5=\frac {17(-750637+3971 b)}{\delta b},\quad c_6=\frac {17((750637+3971 b)}{\delta b},
$$ 
$$ c_7=-\frac {7}{2^{10}\cdot 3\cdot 5^5\cdot 11\cdot 13\cdot 23\cdot 107\cdot 691\cdot 3617},
$$
where 
\begin{eqnarray*}
&& \alpha=7213\cdot 3027811\cdot 95518669,\quad \beta=2\cdot 397\cdot 18013\cdot 370716527,\\
&& \gamma=3^7\cdot 5^7\cdot 7^2\cdot 11^2\cdot 13\cdot 23\cdot 37\cdot 97\cdot 691\cdot 1721\cdot 1889\cdot 3617\cdot 657931\cdot 883331\cdot 1001259881,\\ 
&& \delta=2^{17}\cdot 3^8\cdot 5^9\cdot 7^4\cdot 11^2\cdot 13\cdot 19\cdot 23\cdot 107\cdot 373\cdot 691\cdot 3617,\\
&& a=\sqrt{51349},\quad b=\sqrt{18209}.
\end{eqnarray*}

One can check for consistency by computing both sides of (\ref{E16}) for $D_2,D_3$.

\subsubsection{Weight 18}

We have dim$M_{18}(\Gamma_3)=8$; $f_1$ is the Siegel Eisenstein series; $f_2,f_3,f_4$ are Klingen Eisenstein series; $f_5,f_6$ are the Miyawaki lifts of type I; $f_7,f_8$ are Miyawaki lifts of type II. We plug $O,u_2,u_6,u_4,W,S,D_1,H$. Since it is cumbersome to compute $A(H)=A_{E_{2k}|_{Sp_6}}(H)$, we set $A(H)=d$. Then
\begin{equation}\label{E18}
E_{18}|_{Sp_6}=c_1 f_1+c_2f_2+c_3f_3+c_4f_4+c_5f_5+c_7f_7+c_8f_8,
\end{equation}
where 
\begin{eqnarray*} 
&& c_1=\frac {2^{15}\cdot 3^7\cdot 5\cdot 7^2\cdot 11\cdot 17\cdot 19\cdot 31}{37\cdot 683\cdot 1721\cdot 43867\cdot 305065927\cdot 1001259881\cdot 151628697551},\quad c_2=0,\\
&& c_3=\frac {323(\alpha+\beta a)}{\gamma a},\quad c_4=\frac {-323(-\alpha+\beta a)}{\gamma a},\\
&& c_5=\frac {-\delta+\eta b+\mu d+\nu bd}{\xi b},\quad c_6=\frac {\delta+\eta b-\mu d+\nu bd}{\xi b},\\
&& c_7=\frac {-\sigma+\kappa a-\epsilon d+ \phi ad}{\Sigma a},\quad c_8=\frac {\sigma+\kappa a+\epsilon d+ \phi ad}{\Sigma a},
\end{eqnarray*}
and $a=\sqrt{2356201}, b=\sqrt{18295489}$,
\begin{eqnarray*}
&&\alpha=7\cdot 3763811\cdot 20444077\cdot 123989669,\quad \beta=163169\cdot 376345229688521,\\
&&\gamma={\scriptstyle 2^{13}\cdot 3^6\cdot 5^9\cdot 7^2\cdot 13^2\cdot 29^2\cdot 103\cdot 1721\cdot 3617\cdot 43867\cdot 1001259881\cdot 151628697551\cdot 7947218274280511539,}\\
&&\delta={\scriptstyle 2^6\cdot 3^6\cdot 7\cdot 17\cdot 19\cdot 53\cdot 107\cdot 15131\cdot 19684440289},\quad \eta={\scriptstyle 2^6\cdot 3^6\cdot 7\cdot 19\cdot 23629\cdot 3314029\cdot 14628829,}\\
&& \mu=13\cdot 19\cdot 307\cdot 719\cdot 43867,\quad \nu=13\cdot 251\cdot 419\cdot 43867,\quad \epsilon=79\cdot 3617\cdot 43867\cdot 1949111,\\
&& \sigma={\scriptstyle 2^6\cdot 3^6\cdot 7\cdot 19\cdot 59\cdot 8592181585668944485139},\quad \kappa={\scriptstyle 2^6\cdot 3^6\cdot 7\cdot 13\cdot 19\cdot 166031\cdot 1890701\cdot 83097733,}\\
&& \phi={\scriptstyle 17\cdot 3617\cdot 6043\cdot 43867},\quad \Sigma={\scriptstyle 2^{49}\cdot 3^{27}\cdot 5^{14}\cdot 7^8\cdot 11^4\cdot 13\cdot 29\cdot 103\cdot 3617\cdot 15511\cdot 43867\cdot 12519167993.}
\end{eqnarray*}

One can check for consistency by computing both sides of (\ref{E18}) for $D_2,D_3$.

\section{Appendix A: All $T\in \frak J(\Z)_{>0}$ with $T_1=\diag(2,2,2)$}
\label{Tdiag222} 
Let us follow the notation of Section \ref{RMF}.
We compute all $T=(x_{ij})_{1,\le i,j\le 3}\in \frak J(\Z)_{>0}$ with $T_1=\diag(2,2,2)$ 
together with the following invariants. 
Let 
$$d_1={\rm gcd}(T):={\rm gcd}(x_{ij}\ |\ 1\le i,j\le 3),\ d_2={\rm gcd}(T\times T),\ 
d_3=\det(T)
$$
and put 
$$d(T):=(d_1,d_2,d_3).$$
Then, for each rational prime $p$, put 
$$\tau_p(T):=(\tau(1),\tau(2),\tau(3)),\ \tau(i):={\rm ord}_p(d_i).$$
In what follows, we first compute the possible $d(T)$ and then the number  
of $T$ with a given $d(T)$ by using Mathematica (version 12.1). The data $\{\tau_p(T)\}_p$ is immediately read off from 
$d(T)$.

Now we write 
$$T=\begin{pmatrix}
2 & x& y \\
\bar{x} & 2 &z \\
\bar{y}  & \bar{z}  & 2
\end{pmatrix},\ x,y,z\in \frak o'.$$
Then we have 
$$T\times T=\begin{pmatrix}
4-N(z) & y\bar{z}-2x&  xz-2y \\
z\bar{y}-2\bar{x} & 4-N(y) & \bar{x}y-2z \\
\bar{z}\bar{x}-2\bar{y}  & \bar{y}x-2\bar{z}  & 4-N(x)
\end{pmatrix}$$
and 
$$\det(T)=8-2(N(x)+N(y)+N(z))+t,\ t:={\rm tr}((xz)\bar{y}).$$
Since $T>0$ and $x,y,z$ are integral (imaginary) octonions, we have 
$$0\le N(x),N(y),N(z)\le 3.$$
Note that $N(x)=0$ implies $x=0$. 

\begin{lemma}\label{tr} Let $x,y\in \o\otimes_\Z\R$.
Then, $|{\rm tr}(x\bar y)|\le 2\sqrt{N(x)N(y)}$. 
\end{lemma}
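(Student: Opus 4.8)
The plan is to recognize $|{\rm tr}(x\bar y)|\le 2\sqrt{N(x)N(y)}$ as nothing more than the Cauchy--Schwarz inequality for the positive definite quadratic form $N$ on the real octonion algebra $\o\otimes_\Z\R$. Writing $x=\sum_{i=0}^7 x_i e_i$ and $y=\sum_{i=0}^7 y_i e_i$, the norm is $N(x)=\sum_i x_i^2$, so $N$ is the standard Euclidean form on $\R^8\cong\o\otimes_\Z\R$, with associated symmetric bilinear form $\langle x,y\rangle=\sum_i x_i y_i$ satisfying $\langle x,x\rangle=N(x)$. Thus once I show ${\rm tr}(x\bar y)=2\langle x,y\rangle$, the lemma is immediate.

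First I would establish this identity. Conjugation is the linear involution $\bar x=x_0 e_0-\sum_{i\ge 1}x_i e_i$, and it is an anti-automorphism of the (nonassociative) algebra, so $\overline{x\bar y}=y\bar x$. For any octonion $w$ one has $w+\bar w={\rm tr}(w)$ (a scalar), hence $x\bar y+y\bar x=x\bar y+\overline{x\bar y}={\rm tr}(x\bar y)$. On the other hand, polarizing $N(u)=u\bar u$ gives $N(x+y)=N(x)+N(y)+(x\bar y+y\bar x)$, while directly $N(x+y)-N(x)-N(y)=2\langle x,y\rangle$. Comparing the two expressions yields ${\rm tr}(x\bar y)=2\langle x,y\rangle$, as needed. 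Alternatively, one reads off ${\rm tr}(x\bar y)=2\sum_i x_i y_i$ by computing the $e_0$-coefficient of $x\bar y$ directly on the basis.

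With the identity in hand, the result follows by applying Cauchy--Schwarz to the Euclidean form: $|{\rm tr}(x\bar y)|=2|\langle x,y\rangle|\le 2\sqrt{\langle x,x\rangle\langle y,y\rangle}=2\sqrt{N(x)N(y)}$, with equality precisely when $x$ and $y$ are linearly dependent over $\R$. I do not expect a genuine obstacle here: the only point requiring care is that the trace form $\tfrac12{\rm tr}(x\bar y)$ really is the Euclidean inner product, and in particular that nonassociativity of $\o\otimes_\Z\R$ plays no role---only the conjugation anti-involution and the quadratic norm enter, so the multiplication table of the octonions never needs to be invoked.
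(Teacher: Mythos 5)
Your proof is correct and follows essentially the same route as the paper: identify $\tfrac12{\rm tr}(x\bar y)$ with the standard Euclidean inner product on $\R^8$ in the coordinates $x=\sum x_ie_i$, $y=\sum y_ie_i$, and then apply the Cauchy--Schwarz inequality. The paper simply asserts the identity ${\rm tr}(x\bar y)=2\bigl(x_0y_0+\sum_{i=1}^7x_iy_i\bigr)$ as "easy to see," whereas you supply the short polarization argument for it; otherwise the two arguments coincide.
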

\begin{proof}Write $x=\ds\sum_{i=0}^7 x_i e_i,\ y=\sum_{i=0}^7 y_i e_i$ 
for $x_i, y_i\in \R$ ($0\le i\le 7$) by using the basis in \cite[Section 2]{KY}. 
It is easy to see that 
$${\rm tr}(x\bar y)=2\Big(x_0y_0+\sum_{i=1}^7 x_i y_i\Big).
$$
Since $N(x)=\ds\sum_{i=0}^7 x^2_i,\ N(y)=\ds\sum_{i=0}^7 y^2_i$, 
we have the desired inequality by
Cauchy-Schwarz inequality.
\end{proof}

By the equality criterion in Cauchy-Schwarz inequality, we have
\begin{corollary} \label{Trace} For octonions $x,y$, we have ${\rm tr}(x\bar y)= 2\sqrt{N(x)N(y)}$ if and only if $y=x$. In particular, if $N(x)=N(y)=1$,
${\rm tr}(x\bar y)= 2$ if and only if $y=x$.
\end{corollary}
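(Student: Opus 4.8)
The plan is to read the statement off directly from the proof of Lemma~\ref{tr}, recognizing it as the equality case of the Cauchy--Schwarz inequality. Writing $x=\sum_{i=0}^7 x_i e_i$ and $y=\sum_{i=0}^7 y_i e_i$, that proof already establishes ${\rm tr}(x\bar y)=2\big(x_0y_0+\sum_{i=1}^7 x_iy_i\big)=2\langle x,y\rangle$, where $\langle x,y\rangle$ is the standard inner product on $\R^8$ in the basis $e_0,\dots,e_7$, and that $N(x)=\langle x,x\rangle=\|x\|^2$. Thus the asserted identity ${\rm tr}(x\bar y)=2\sqrt{N(x)N(y)}$ is exactly the (positive) equality $\langle x,y\rangle=\|x\|\,\|y\|$ in Cauchy--Schwarz, and the task reduces to characterizing when that equality holds.

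First I would dispose of the converse, which is immediate: if $y=x$ then ${\rm tr}(x\bar x)=2\langle x,x\rangle=2N(x)=2\sqrt{N(x)N(x)}$, so equality holds. For the forward direction I would invoke the usual argument for equality in Cauchy--Schwarz by expanding
\[
\big\|\,\|y\|\,x-\|x\|\,y\,\big\|^2=2\|x\|^2\|y\|^2-2\|x\|\,\|y\|\,\langle x,y\rangle=2\|x\|\,\|y\|\big(\|x\|\,\|y\|-\langle x,y\rangle\big),
\]
which vanishes precisely when $\langle x,y\rangle=\|x\|\,\|y\|$. Hence the equality hypothesis forces $\|y\|\,x=\|x\|\,y$, i.e. $x$ and $y$ are nonnegatively proportional. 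Specializing to the case of interest $N(x)=N(y)=1$ gives $\|x\|=\|y\|=1$, whereupon the relation reads $x=y$; this is the ``in particular'' assertion that is actually used in Section~\ref{Tdiag222}.

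I do not expect a genuine obstacle here, since everything is the equality case of Cauchy--Schwarz. The single point requiring a word of care is that the clean conclusion $y=x$ uses $\|x\|=\|y\|$: in the general statement the equality ${\rm tr}(x\bar y)=2\sqrt{N(x)N(y)}$ by itself only yields the positive proportionality $\|y\|\,x=\|x\|\,y$, and one needs $N(x)=N(y)$ (as holds in every application, where both norms equal $1$) to pin down the scalar and conclude $y=x$.
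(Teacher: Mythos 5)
Your proof is correct and is essentially the paper's own argument: the paper simply cites the equality criterion in the Cauchy--Schwarz inequality applied to Lemma \ref{tr}, which is exactly what you carry out in detail. Your closing caveat is also well taken --- as literally stated the general ``if and only if'' only forces nonnegative proportionality of $x$ and $y$ (e.g.\ $y=2x$ also gives equality), and the clean conclusion $y=x$ requires $N(x)=N(y)$, which holds in the ``in particular'' case actually used in Section \ref{Tdiag222}.
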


Since $N(xy)=xy\overline{xy}=x(y\bar{y})\bar{x}=N(y)x\bar{x}=N(y)N(x)=N(x)N(y)$ and 
$N(x)=N(\bar{x})$, we have the following.
\begin{corollary}\label{ineq}For $x,y,z\in \o$, it holds that 
$$|{\rm tr}((xz)\bar{y})|\le 2\sqrt{N(x)N(y)N(z)}.$$ 
\end{corollary}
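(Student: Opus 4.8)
The final statement to prove is Corollary~\ref{ineq}: for $x,y,z\in\o$, we have $|{\rm tr}((xz)\bar y)|\le 2\sqrt{N(x)N(y)N(z)}$.

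The plan is to reduce this trilinear inequality to the bilinear one already established in Lemma~\ref{tr}, namely $|{\rm tr}(u\bar v)|\le 2\sqrt{N(u)N(v)}$, by grouping the product $(xz)\bar y$ as $u\bar v$ with $u=xz$ and $v=y$. First I would apply Lemma~\ref{tr} with $u=xz$ and $v=y$ to get
\[
|{\rm tr}((xz)\bar y)|\le 2\sqrt{N(xz)N(y)}.
\]
The remaining task is simply to verify that $N(xz)=N(x)N(z)$, which is exactly the multiplicativity of the octonion norm. This is precisely the computation already recorded in the sentence preceding the corollary statement: $N(xz)=xz\overline{xz}=x(z\bar z)\bar x=N(z)x\bar x=N(x)N(z)$, using that the norm is central (so $N(z)=z\bar z$ is a scalar that commutes out) and that $N(x)=N(\bar x)$. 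Substituting $N(xz)=N(x)N(z)$ into the displayed bound yields
\[
|{\rm tr}((xz)\bar y)|\le 2\sqrt{N(x)N(z)N(y)}=2\sqrt{N(x)N(y)N(z)},
\]
which is the desired inequality.

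There is essentially no obstacle here: the corollary is a direct consequence of combining Lemma~\ref{tr} with norm multiplicativity, both of which are in hand. The only point deserving a moment's care is the use of multiplicativity $N(xz)=N(x)N(z)$, since octonions are non-associative; but the manipulation $x(z\bar z)\bar x$ is safe because $z\bar z=N(z)$ is a scalar, and scalars multiply associatively and commute with everything, so the non-associativity never enters. I would note that Lemma~\ref{tr} was stated for $x,y\in\o\otimes_\Z\R$, so it applies verbatim to the integral octonions $xz,y\in\o$, and the corollary follows immediately without any further case analysis.
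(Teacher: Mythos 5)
Your proof is correct and is exactly the paper's argument: the paper proves the corollary by the single sentence preceding it, establishing $N(xz)=N(x)N(z)$ via $xz\overline{xz}=x(z\bar z)\bar x$ and then (implicitly) applying Lemma~\ref{tr} to the pair $xz$, $y$. Your added remark that the scalar $z\bar z=N(z)$ defuses any non-associativity worry is a fair gloss on the same computation.
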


Let us turn to our setting. Then,  
$$\det(T)=8-2(N(x)+N(y)+N(z))+t>0,\ (t:={\rm tr}((xz)\bar{y}))$$
and 
$$0\le N(x),N(y),N(z)\le 3.$$
By Corollary \ref{ineq}, we have $|{\rm tr}((xz)\bar{y})|\le 10$ since $2\sqrt{3^3}=10.3\cdots$. 
Let us consider the set 
$$S:=\Bigg\{(n_1,n_2,n_3,t,8-2(n_1+n_2+n_3)+t)\in \Z^5\ \Bigg|\ 
\begin{array}{l}
0\le n_1,n_2,n_3\le 3,\\ 
|t|\le 2\sqrt{n_1n_2n_3},\\  
8-2(n_1+n_2+n_3)+t>0
\end{array}
\Bigg\}.$$
Then, $|S|=39$ and explicitly, 
$$
S=\{
   (0, 0, 0, 0, 8), (0, 0, 1, 0, 6),(0, 0, 2, 0, 4), (0, 0, 3, 0, 2), (0, 1, 0, 0, 6), (0, 1, 1, 0, 4),$$
$$(0, 1, 2, 0, 2), (0, 2, 0, 0, 4),(0, 2, 1, 0, 2), (0, 3, 0, 0, 2), (1, 0, 0, 0, 6), (1, 0, 1, 0, 4),$$ 
$$(1, 0, 2, 0, 2), (1, 1, 0, 0, 4),(1, 1, 1, -1, 1), (1, 1, 1, 0, 2),(1, 1, 1, 1, 3), (1, 1, 1, 2, 4),$$
$$(1, 1, 2, 1, 1), (1, 1, 2, 2, 2),(1, 1, 3, 3, 1), (1, 2, 0, 0, 2), (1, 2, 1, 1, 1), (1, 2, 1, 2, 2), $$
$$(1, 2, 2, 3, 1), (1, 2, 2, 4, 2),(1, 3, 1, 3, 1), (2, 0, 0, 0, 4), (2, 0, 1, 0, 2), (2, 1, 0, 0, 2), $$
$$(2, 1, 1, 1, 1), (2, 1, 1, 2, 2),(2, 1, 2, 3, 1), (2, 1, 2, 4, 2), (2, 2, 1, 3, 1), (2, 2, 1, 4, 2), $$
$$(2, 2, 2, 5, 1), (3, 0, 0, 0, 2),(3, 1, 1, 3, 1)\}.$$
It follows $d_3\in \{1, 2, 3, 4, 6, 8\}$. 
Since the symmetric group $\frak S_3$ of degree 3 acts on the first three coordinates of 
$S$ by  $(n_1,n_2,n_3)\mapsto (n_{\sigma(1)},n_{\sigma(2)},n_{\sigma(3)})$ for 
$\sigma\in\frak S_3$, we have $|S/\frak S_3|=16$ and explicitly
$$
S/\frak S_3=
\{ (0, 0, 0, 0, 8), (1, 0, 0, 0, 6), (2, 0, 0, 0, 4), (3, 0, 0, 0, 2), (1, 1, 0, 0, 4), (2, 1, 0, 0,2),$$
$$(1, 1, 1, -1, 1), (1, 1, 1, 0, 2), (1, 1, 1, 1, 3), (1, 1, 1, 2, 4), (2, 1, 1, 1, 1), (2, 1, 1, 2,2),$$
$$(3, 1, 1, 3, 1), (2, 2, 1, 3, 1), (2, 2, 1, 4, 2), (2, 2, 2, 5,1)\}.
$$
  
We are now ready to carry out the computation. 
For a given $(n_1,n_2,n_3,t,8-2(n_1+n_2+n_3)+t)\in S/\frak S_3$,  
we shall count $T=\begin{pmatrix}
2 & x& y \\
\bar{x} & 2 &z \\
\bar{y}  & \bar{z}  & 2
\end{pmatrix},\ x,y,z\in \frak o'$ with 
$$N(x)=n_1,\ N(y)=n_2,\ N(z)=n_3,\ t={\rm tr}((xz)\bar{y}).$$ 
Put 
$$S_{(n_1,n_2,n_3)}(t):=\{(x,y,z)\in \frak o'\ |\ 
N(x)=n_1,\ N(y)=n_2,\ N(z)=n_3,\ t={\rm tr}((xz)\bar{y}\}$$
for simplicity. Notice that $|S_{(n_1,n_2,n_3)}(t)|$ is stable under the action of 
$\frak S_3$ and that $|S_{(n_1,n_2,n_3)}(t)|=|S_{(n_1,n_2,n_3)}(-t)|$ since $-1\in \frak o$. 
Let us observe that when one of $x,y,z$ is non-zero, say $x$, if $N(x)<4$, then 
$\gcd(x)=1$. Thus $d_1=1$ and $\tau(1)=\ord_p(d_1)=0$ for all prime $p$. 

\noindent
\textbf{Case 1}. When $(n_1,n_2,n_3)=(0,0,0)$, then, $x=y=z=t=0$. 
In this case, $T=\diag(2,2,2)$ and $d(T)=(2,4,8)$. 
Then, $$|S_{(0,0,0)}(0)|=1.$$

\noindent
\textbf{Case 2}. When $(n_1,n_2,n_3)=(1,0,0)$  then, $y=z=t=0$. As mentioned before, $d_1=1$ since $x\neq 0$. 
Since $4-N(x)=3$ is odd and 
$$T\times T=\begin{pmatrix}
4 & -2x&  0 \\
-2\bar{x} & 4 & 0 \\
0 & 0 & 3
\end{pmatrix},$$
we have $d_2=1$. Further, $d_3=6$. Thus, $d(T)=(1,1,6)$. 
Then, 
$$|S_{(1,0,0)}(0)|=N_{{\rm ioc}}(1)=126.$$

\noindent
\textbf{Case 3}. When $(n_1,n_2,n_3)=(3,0,0)$  then, $y=z=t=0$. 
Similar to the previous case, we have $d(T)=(1,1,2)$. 
Then, 
$$|S_{(3,0,0)}(0)|=N_{{\rm ioc}}(3)=2072.$$

\noindent
\textbf{Case 4}. When $(n_1,n_2,n_3)=(2,0,0)$, then $y=z=t=0$. 
We see $d_1=1$ as before and $d_2=2$ from 
$$T\times T=\begin{pmatrix}
4 & -2x&  0 \\
-2\bar{x} & 4 & 0 \\
0 & 0 & 2
\end{pmatrix}$$
since $\gcd(x)=1$. Further, $d_3=8-4=4$. 
Thus, $d(T)=(1,2,4)$. 
Then, 
$$|S_{(2,0,0)}(0)|=N_{{\rm ioc}}(2)=756.$$

\noindent
\textbf{Case 5}. When $(n_1,n_2,n_3)=(1,1,0)$, then $z=t=0$. 
In this case, $d_1=1$ and $d_2=1$ since $\gcd(4-N(z),4-N(x))=\gcd(4,3)=1$. 
Further, $d_3=8-2(1+1)=4$. Hence $d(T)=(1,1,4)$. 
Then, 
$$|S_{(1,1,0)}(0)|=N_{{\rm ioc}}(1)^2=126^2=15876.$$

\noindent
\textbf{Case 6}. When $(n_1,n_2,n_3)=(2,1,0)$, then $z=t=0$. 
As in the previous case, we have $d(T)=(1,1,2)$.
Then, 
$$|S_{(2,1,0)}(0)|=N_{{\rm ioc}}(1)N_{{\rm ioc}}(2)=126\times 756=95256.$$

\noindent
\textbf{Case 7}. When $(n_1,n_2,n_3)=(1,1,1)$, we have $t=-1,0,1,2$. 
We remark that 
$$T\times T=\begin{pmatrix}
3 & y\bar{z}-2x&  xz-2y \\
z\bar{y}-2\bar{x} & 3 & \bar{x}y-2z \\
\bar{z}\bar{x}-2\bar{y}  & \bar{y}x-2\bar{z}  & 3
\end{pmatrix}$$
and 
$$d_3=\det(T)=2+t,\ N(y\bar{z}-2x)=5-2t.$$ 
Thus, $d_3$ and $N(y\bar{z}-2x)$ are coprime unless $t=1$ in which case 
$d_3=N(y\bar{z}-2x)=3$. However, $\gcd(y\bar{z}-2x)=1$ when $N(y\bar{z}-2x)=3$.  
Thus, in any case, we have $d_1=d_2=1$ and 
$$d(T)=(1,1,2+t),\ t=-1,0,1,2.$$
By Mathematica (version 12.1), we have 
$$|S_{(1,1,1)}(-1)|=459648,\ |S_{(1,1,1)}(0)|=1065960,\ |S_{(1,1,1)}(1)|=459648
,\ |S_{(1,1,1)}(2)|=7560.$$

\noindent
\textbf{Case 8}. When $(n_1,n_2,n_3)=(2,1,1)$, then $t=1,2$. 
In this case, $d_1=d_2=1=1,\ d_3=t$ and thus $d(T)=(1,1,t)$. 
By Mathematica (version 12.1), we have 
$$|S_{(2,1,1)}(1)|=3193344,\ |S_{(2,1,1)}(2)|=671328.$$

\noindent
\textbf{Case 9}. When $(n_1,n_2,n_3)=(3,1,1)$, then $t=3$. 
In this case, $d_1=d_2=1=1,\ d_3=8-10+3=1$ and thus $d(T)=(1,1,1)$. 
By Mathematica (version 12.1), we have 
$$|S_{(3,1,1)}(3)|=249984.$$

\noindent
\textbf{Case 10}. When $(n_1,n_2,n_3)=(2,2,1)$, then $t=3,4$. 
In this case, $d_1=d_2=1=1,\ d_3=8-10+t=t-2$ and thus $d(T)=(1,1,t-2)$. 
By Mathematica (version 12.1), we have 
$$|S_{(2,2,1)}(3)|=2032128,\ |S_{(2,2,1)}(4)|=31752.$$

\noindent
\textbf{Case 11}. When $(n_1,n_2,n_3)=(2,2,2)$, then $t=5$. 
In this case, $d_1=1$ as before. Notice that 
$$T\times T=\begin{pmatrix}
2 & y\bar{z}-2x&  xz-2y \\
z\bar{y}-2\bar{x} & 2 & \bar{x}y-2z \\
\bar{z}\bar{x}-2\bar{y}  & \bar{y}x-2\bar{z}  & 2
\end{pmatrix}.$$
Since $\gcd(2, y\bar{z}-2x)=\gcd(2, y\bar{z})=\gcd(2, y)\gcd(2,\bar{z})=1$ because 
$\gcd(y)=\gcd(z)=1$. Thus, $d_2=1$. Further, $d_3=8-12+5=1$ and therefore, 
$d(T)=(1,1,1)$. 
The number of $T$ with $d(T)=(1,1,1)$ is given by the cardinality of 
$$S_{(2,2,2)}(5):=\{(x,y,z)\in \frak o'^3\ |\ N(x)=N(y)=N(z)=2,\ 
{\rm tr}((xz)\bar{y})=5\}$$
and by Mathematica (version 12.1), we have 
$$|S_{(2,2,2)}(5)|=1306368.$$ 

\noindent
\textbf{Summary}.  For each given $(d_1,d_2,d_3)\in \Z^3_{>0}$, let us count all 
$T\in \frak J(\Z)_{>0}$ such that $T_1=\diag(2,2,2)$ and $d(T)=(d_1,d_2,d_3)$. 

\medskip
{\small
\begin{center}
\begin{tabular}{|l|l|l|l|l|l|l|l|} \hline
$(d_1,d_2,d_3)$ & $(1,1,1)$ & $(1,1,2)$ & $(1,1,3)$ & $(1,1,4)$ & $(1,1,6)$ & $(1,2,4)$ & $(2,4,8)$ \\ \hline
  $\#$ of $T$  & 18192384 & 3752952 &459648 & 55188 & 378& 2268 & 1 \\ \hline
 Cases &Case 9, Case 11 & Case 3, Case 6 & Case 7 ($t=1$)  &  Case 5 & Case 2 & Case 4  & Case 1 \\ 
  &Case 7 ($t=-1$)  & Case 7 ($t=0$) &  & Case 7 ($t=2$)  &  &   &  \\  
   &Case 8 ($t=1$)  & Case 8 ($t=2$) & &   &  &   &  \\  
     &Case 10 ($t=3$)   & Case 10 ($t=4$)  &  &  &  &   &  \\  \hline
 \end{tabular}
  \captionof{table}{}
 \end{center}
}

\medskip
We remark that we need to take the number of $\frak S_3$-orbits into account. 
For example, let us consider the case $(d_1,d_2,d_3)=(1,1,2)$. Then, 
\begin{enumerate}
\item Case  3 ($(n_1,n_2,n_3)=(3,0,0)$), 
\item Case 6 ($(n_1,n_2,n_3)=(2,1,0)$), 
\item Case 7 ($t=0$)($(n_1,n_2,n_3)=(1,1,1)$), 
\item Case 8 ($t=2$) ($(n_1,n_2,n_3)=(2,1,1)$), and  
\item Case 10 ($t=4$) ($(n_1,n_2,n_3)=(2,2,1)$)
\end{enumerate}
contribute to the total amount and it is given by  

$$|\frak S_3(3,0,0)|\cdot |S_{(3,0,0)}(0)|
+|\frak S_3(2,1,0)|\cdot |S_{(2,1,0)}(0)|
+|\frak S_3(1,1,1)|\cdot |S_{(1,1,1)}(0)|
$$
$$+|\frak S_3(2,1,1)|\cdot |S_{(2,1,1)}(2)|
+|\frak S_3(2,2,1)|\cdot |S_{(2,2,1)}(4)|$$
$$=3N_{{\rm ioc}}(3)+6N_{{\rm ioc}}(1)N_{{\rm ioc}}(2)+1\cdot 1065960
+3\cdot 671328
+3\cdot 31752=3752952.
$$

\section{Appendix B: Arthur's classification for holomorphic Siegel modular forms on 
$Sp_6$}
In this appendix, by using Arthur's classification, we show that all Hecke eigen holomorphic Siegel modular forms on $\Bbb H_3$ with level one and the scalar weight $k\ge 4$ are either of Miyawaki type I, II, or genuine forms. We refer to \cite{Atobe} or \cite{CL} 
for Arthur's classification for the symplectic groups. As for Miyawaki lifts, they have been already discussed in \cite{Atobe}, but there was not a  discussion of their relationship to CAP representations.  
We also remark that no endoscopic forms show up. This phenomena is a special feature in 
the case of level one.  

Let us recall some notations (see also \cite[Section 4]{KWY}). A (discrete) global A-parameter for $Sp_{2n}$ is a symbol   
\begin{equation}\label{para}
\psi=\pi_1[e_1]\boxplus \cdots\boxplus \pi_r[e_r]
\end{equation}
satisfying the following conditions:
\begin{enumerate}[{\rm A}-(1)]
\item for each $i$ $(1\le i \le r)$, $\pi_i$ is an irreducible unitary cuspidal self-dual automorphic representation  of ${\rm GL}_{m_i}(\A)$. 
In particular, the central character $\omega_i$ of $\pi_i$ is trivial or quadratic; 
\item for each $i$, we have $e_i\in \Z_{>0}$ and $\ds\sum_{i=1}^r m_ie_i=2n+1$;
\item if $e_i$ is odd, $\pi_i$ is orthogonal, i.e., $L(s,\pi_i,{\rm Sym}^2)$ has a pole at $s=1$;
\item if $e_i$ is even, $\pi_i$ is symplectic, i.e., $L(s,\pi_i,\wedge^2)$ has a pole at $s=1$;
\item $\omega^{e_1}_1\cdots \omega^{e_r}_r=1$; 
\item if $i\neq j$, $\pi_i\simeq \pi_j$, then $e_i\neq e_j$. 
\end{enumerate}

Let $\Psi(Sp_{2n})$ be the set of global $A$-parameters for $Sp_{2n}$.
We say that two global $A$-parameters $\boxplus_{i=1}^r\pi_i[e_i]$ and $\boxplus_{i=1}^{r'}\pi'_i[e'_i]$ are 
equivalent if $r=r'$ and there exists $\sigma\in \mathfrak S_r$ such that $e'_i=e_{\sigma(i)}$ and 
$\pi'_i=\pi_{\sigma(i)}$. 
For each $\psi \in \Psi(Sp_{2n})$, one can associate the global A-packet $\Pi_\psi$. 
\begin{defin}
Let $\psi=\ds\boxplus_{i=1}^r\pi_i[e_i]$ be a global A-parameter. 
Then, 
\begin{itemize}
\item $\psi$ is said to be semi-simple if $e_1=\cdots=e_r=1$; otherwise, $\psi$ is said to be non-semi-simple; 
\item $\psi$ is said to be simple if $r=1$ and $e_1=1$.   
\end{itemize}
\end{defin}

\begin{defin}
Let $\psi=\ds\boxplus_{i=1}^r\pi_i[e_i]$ be a global A-parameter. 
Let $\Pi$ be a cuspidal automorphic representation of $\Sp_6(\A)$ which belongs to $\Pi_\psi$.  
Then, 
\begin{itemize}
\item $\Pi$ is said to be endoscopic if $\psi$ is semi-simple and $r>1$;
\item $\Pi$ is said to be genuine if $\psi$ is simple.   
\end{itemize}

We call a Hecke eigen form $F$ an endoscopic form or a genuine form 
if its associated representation $\Pi_F$ is endoscopic or genuine, respectively. 
\end{defin}

\begin{remark} Let $F$ be a Hecke eigenform for $\Bbb H_n$, and $\Phi: \Sp_{2n}(\Bbb A)\longrightarrow \Bbb C$ be the adelic function attached to $F$. The referee pointed out that the level one assumption on $F$ guarantees that $\Phi$ generates an irreducible automorphic representation of $\Sp_{2n}(\Bbb A)$ due to \cite[Corollary 3.4]{NPS}. Then one can speak of the associated representation $\Pi_F$.
\end{remark}

\begin{defin}\label{cap}(cf. \cite[Section 1]{GG}) A cuspidal automorphic representation $\Pi$ of $\Sp_6(\A)$ 
is said to be a CAP (cuspidal associated to a parabolic subgroup) representation if  
there exists a parabolic subgroup $P=MN$, a unitary cuspidal automorphic 
representation $\pi$ of $M(\A)$ and $s>0$, such that $\Pi$ is nearly equivalent to 
${\rm Ind}^{\Sp_6(\A)}_{P(\A)} \pi\delta_P^s$ (the normalized induction), where $\delta_P$ is the modulus character of $P$.


A cuspidal Hecke eigen automorphic form $F$ on $\Sp_6(\A)$ is said to be a CAP 
form if the corresponding representation $\Pi_F$ is a CAP representation.  
\end{defin}

Let $F$ be a Hecke eigen cusp form on $\Sp_6(\A)$ of level 1 with the scalar weight $k\ge 4$  and 
$\Pi=\Pi_\infty\otimes \otimes'_p \Pi_p$ be the corresponding cuspidal representation 
of $\Sp_6(\A)$. Since $k\ge 4$, $\Pi_\infty$ is a discrete series representation. 
Let $\psi$ be the global Arthur parameter such that $\Pi\in \Pi_\psi$.
Such $\psi$ is uniquely determined by $L^S(s,\Pi,{\rm St})$ for 
any finite set $S$ of primes. 

\subsection{CAP representations of $Sp_6$}
\begin{prop}\label{cap}Keep the notation as above. 
It holds that
\begin{enumerate}
\item if $\psi$ is semisimple, then $\Pi$ is tempered everywhere; 
\item if $\Pi$ is a CAP representation, then $\psi$ is non-semisimple.
\end{enumerate}
\end{prop}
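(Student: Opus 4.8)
The plan is to reduce (2) to (1) together with the elementary observation that a CAP representation is non-tempered almost everywhere, so the heart of the matter is (1). For (1), I would argue through the localization of the global A-parameter. Suppose $\psi=\boxplus_{i=1}^r\pi_i[1]$ is semisimple. By Arthur's classification (see \cite{Atobe,CL}), for $\Pi\in\Pi_\psi$ the local component $\Pi_v$ lies in the local A-packet attached to the localization $\psi_v$, and its L-parameter is obtained from $\psi_v$ by restricting the Arthur $\SL_2$ along $w\mapsto(w,\diag(|w|^{1/2},|w|^{-1/2}))$. Since every $e_i=1$, the Arthur $\SL_2$ acts trivially and this shift disappears, so the L-parameter of $\Pi_v$ is simply $\ds\bigoplus_{i=1}^r\phi_{\pi_{i,v}}$. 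Hence $\Pi_v$ is tempered as soon as each $\phi_{\pi_{i,v}}$ has bounded image, i.e. as soon as each $\pi_i$ is tempered at $v$, and (1) reduces to the temperedness of the building blocks $\pi_i$.

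This temperedness is the key input, and it is exactly where the hypotheses of level one and $k\ge4$ enter. Since $\Pi_\infty$ is a holomorphic discrete series of scalar weight $k$ with $k\ge4$, its infinitesimal character is regular, so the discrete series L-parameter $\phi_{\Pi_\infty}$ is a multiplicity-free sum of distinct two-dimensional (hence bounded) irreducible representations of $W_\R$ together with the trivial character. Matching this against $\bigoplus_i\phi_{\pi_{i,\infty}}$ shows that each $\pi_i$ is a regular algebraic self-dual cuspidal representation of $\GL_{m_i}(\A)$ with bounded archimedean parameter; in particular each $\pi_i$ is tempered at $\infty$. At the finite places the temperedness of such regular algebraic self-dual level-one cuspidal representations is the Ramanujan property, which is available in this setting and underlies the classification of \cite{CL}. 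Feeding this back, $\bigoplus_i\phi_{\pi_{i,p}}$ is bounded for every prime $p$, so $\Pi_p$ is tempered; combined with temperedness at $\infty$ this yields (1).

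Finally I would prove (2) by contraposition. If $\Pi$ is CAP, then $\Pi$ is nearly equivalent to ${\rm Ind}^{\Sp_6(\A)}_{P(\A)}\pi\delta_P^s$ for some proper parabolic $P=MN$ and some $s>0$, so at almost every prime $p$ the component $\Pi_p$ is the unramified constituent of ${\rm Ind}^{\Sp_6(\Q_p)}_{P(\Q_p)}\pi_p\delta_P^s$. Because $s>0$ and $P$ is proper, its Satake parameter is that of $\pi_p$ shifted by the nontrivial positive character $\delta_P^s$, hence has an eigenvalue off the unit circle, and $\Pi_p$ is non-tempered. Were $\psi$ semisimple, part (1) would force $\Pi_p$ tempered at every $p$, a contradiction; therefore $\psi$ is non-semisimple. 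The main obstacle throughout is the temperedness of the $\pi_i$ at the finite places: once Arthur's local--global packet description and the regularity coming from $k\ge4$ are in hand, everything else is formal.
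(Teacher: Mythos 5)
Your proposal is correct and follows essentially the same route as the paper: part (1) rests on the everywhere-temperedness of the cuspidal constituents $\pi_i$ (level-one, regular algebraic, self-dual, as in \cite{KWY} and \cite[Corollary 8.2.19]{CL}) together with the triviality of the Arthur $\SL_2$ when all $e_i=1$, and part (2) is the same contrapositive via the observation that a CAP representation is non-tempered at almost all places. The paper simply cites \cite{KWY} for (1) where you spell out the localization argument, but the substance is identical.
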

\begin{proof}The first claim is well-known (see \cite[the proof of Theorem 4.3]{KWY}). 
As for the second claim, for the 
global A-parameter $\psi$ such that $\Pi\in \Pi_\psi$, 
if we write 
$\psi=\pi_1[e_1]\boxplus \cdots\boxplus \pi_r[e_r]$, 
we shall prove that $e_i>1$ for some $1\le i\le r$.  
Note that by \cite[the proof of Theorem 4.3]{KWY} again (or \cite[p.201-202, the proof of 
Corollary 8.2.19]{CL} for details), all $\pi_i$ are tempered everywhere. 
Further, 
$$L_p(s,\Pi,{\rm St})=\prod_{i=1}^r L_p(s,\pi_i[e_i])
$$
for all but finitely many rational primes $p$. Here $L_p(s,\pi[e])=
\prod_{j=0}^{e-1} L_p(s+\frac {e-1}2-j,\pi)$.

If $\Pi$ is nearly equivalent to ${\rm Ind}^{\Sp_6(\A)}_{P(\A)} \pi\delta_P^s$, $s>0$, then $\Pi_p$ is non-tempered for almost all $p$. 
Hence $e_i>1$ for some $i$.

For example, if $\Pi$ is nearly equivalent to ${\rm Ind}^{\Sp_6(\A)}_{P(\A)} \pi |\det|^\frac 12\otimes\sigma$, where
$P=MN$, $M\simeq GL_2\times SL_2$, and $\pi$ is a cuspidal representation of $GL_2$, and $\sigma$ is a cuspidal representation of $SL_2$. Then 
$$L_p(s,\Pi_p,{\rm St})=L(s+\frac 12,\pi_p)L(s-\frac 12,\pi_p)L(s,{\rm Sym}^2\sigma_p'),
$$
where $\sigma'$ is a cuspidal representation of $GL_2$ such that $\sigma'|_{SL_2}=\sigma$.
\end{proof}

\begin{remark}\label{pAp}Keep the notation as above.  
Assume $\Pi$ belongs to the global A-packet for $\psi$ non-semisimple. 
In particular, if we write $\psi=\pi_1[e_1]\boxplus \cdots\boxplus \pi_r[e_r]$ as in (\ref{para}), then it satisfies the following:
\begin{enumerate}
\item There exists $e_i\ (1\le i\le r)$ such that $e_i\ge 2$ (say $e_1\ge 2$). 
\item For each $ 1\le i\le r$, $\pi_i$ is regular algebraic 
self-dual, and unramified everywhere (cf. \cite[the proof of Corollary 8.2.19]{CL}). Thus, if $m_i=1$, then 
$\pi_i$ has to be the trivial character on $\A^\times$. 
Therefore, the condition A-(6) yields $\#\{i\ |\ (m_i,e_i)=(1,1)\}\le 1$. Further, 
when $m_i=2$, $e_i$ has to be even because $\pi_i$ is unramified everywhere, and hence $L(s,\pi_i,{\rm Sym}^2)$ has no pole at $s=1$.
It follows that $r\le 3$. 
Note that if $m_i$ is odd, then $e_i$ has to be odd since $L(s,\pi_i,\wedge^2)$ has no pole at $s=1$.
\item The localization of $\psi$ at the archimedean place is given by 
$$\psi_\infty=\bigoplus_{i=1}^r \rho_{\pi_i,\infty}\otimes S_{e_i}:W_\R \times \SL_2(\C)\lra 
\GL_{7}(\C)$$
where $\rho_{\pi_{i,\infty}}:W_\R\lra \GL_{m_i}(\C)$ is the local Langlands parameter 
of $\pi_{i,\infty}$, and $S_d$ stands for the unique irreducible algebraic representation of $\SL_2(\C)$ of dimension $d$. 

\end{enumerate}
\end{remark}

Let ${\bf k}=(k,k,k)$ be the lowest weight of $\Pi_\infty$. By 
\cite[Proposition 3.2]{Atobe} (note that the holomorphy assumption is important here), 
the localization of $\psi$ at the archimedean place can be written as 
$$\psi_\infty=\Big(\bigoplus_{i=1}^t \rho_{\alpha_i}\otimes S_{d_i}\Big)\oplus \sgn$$
such that 
\begin{itemize}
\item $\alpha_i,d_i\in \Z_{>0}$ for $1\le i\le t$;
\item $\alpha_1>\cdots > \alpha_t>0,\ \ds\sum_{i=1}^t d_i=3$;
\item $\alpha_i+d_i\equiv 1$ mod 2;
\item $\bigcup_{i=1}^t \Big\{\frac{\alpha_i+d_i-1}{2},\ldots,\frac{\alpha_i-d_i+1}{2}\Big\}
=\{k-1,k-2,k-3\}$.
\end{itemize}
By the definition of non-semi-simpleness, there exists $1\le i\le r$ such that $e_i>1$ and 
it yields that there exists $1\le i\le t$ such that $d_i>1$. 
Thus, it follows that $1\le t \le 2$. 

When $t=1$, $\psi$ has to be $\pi[3]\oplus \sgn$ for some orthogonal cuspidal 
representation $\pi$ of $\GL_2(\A_\Q)$. Since $\Pi$ is of level one, it is impossible. 

When $t=2$, we have $d_1=1,\ d_2=2$ or $d_1=2,\ d_2=1$. 
For the former case, we have $\alpha_1=2(k-1),\ \alpha_2=2k-5$ and thus  
$$\psi_\infty=\rho_{2(k-1)}\otimes S_2\oplus \rho_{2k-5}\oplus {\rm sgn}.
$$

Let $\psi=\pi_1[d_1]\boxplus \cdots\boxplus \pi_r[d_r]$ be the corresponding global A-parameter with $1\le r\le 3$. 
Note that $(\pi_i[d_i])_\infty=S_{d_i}$ if $\pi_i=1$ and $1[d]$ can not occur in $\psi$ 
when $d$ is even. 

When $r=3$, the only possible case is  
$$\psi=1\boxplus 1[3] \boxplus \pi,
$$ 
where 
$\pi$ is a cuspidal representation of $\GL_3(\A)$. 
However, $\psi_\infty$ cannot have the trivial character and so this case cannot occur. 

When $r=1$, $\psi=1[7]$ (since we have assumed $\psi$ is non-semisimple) so that 
$\psi_\infty=1\otimes S_7$ which is impossible to occur. 

When $r=2$, the possible cases for $\psi$ are 
$$\pi_1[2]\boxplus \pi_2,\quad \pi_3\oplus 1[3]
$$
where $\pi_1,\pi_2,\pi_3$ are cuspidal representations of 
$\GL_n(\A)$ for $n=2,3,4$ respectively. The second case is excluded since 
$\psi_\infty$ does not include $1\otimes S_3$. 
As for the first case, $\pi_2$ has to be orthogonal and hence it is self-dual. By Ramakrishnan \cite{Ra}, it is of the form ${\rm Sym}^2\sigma$, where $\sigma$ is a   
cuspidal representation of $GL_2(\A)$.  
Thus, by arranging symbols, we have 
\begin{equation}\label{typeI}
\psi={\rm Sym}^2\sigma\oplus \pi[2],
\end{equation}
such that $\sigma$ and $\pi$ come from cusp forms in $S_k(\SL_2(\Z))$ and 
$S_{2k-4}(\SL_2(\Z))$ respectively. The cuspidal representation with this Arthur 
parameter is called Miyawaki lift of type I.

For the latter case ($d_1=2,d_2=1$), we have $\alpha_1=2k-3,\ \alpha_2=2(k-3)$. 
In this case, we have 
\begin{equation}\label{typeII}
\psi=\pi[2]\oplus {\rm Sym}^2\sigma,
\end{equation}
such that $\pi$ and $\sigma$ come from cusp forms in $S_{k-2}(\SL_2(\Z))$ and 
$S_{2k-2}(\SL_2(\Z))$ respectively. The cuspidal representation with this Arthur 
parameter is called Miyawaki lift of type II.

Summing up, we have proved the following:

\begin{theorem}\label{CAPMiyawaki}
Let $F$ be a CAP Hecke eigen holomorhic Siegel modular form on $\Sp_6(\A)$ which is of level one and of 
scalar weight $k\ge 4$. Then, $F$ is either Miyawaki lift of type I  or 
Miyawaki lift of type II. Conversely, the two kinds of Miyawaki lifts are CAP forms. 
More precisely, if $F$ is of Miyawaki lift of type I $($resp. type II$)$, then the 
associated cuspidal representation $\Pi_F$ is nearly equivalent to 
$${\rm Ind}_P^G\, \pi_f|\det|^{\frac{1}{2}}\otimes \pi_g$$
where $P=MN$ with $M=GL_2\times SL_2$ and 
$\pi_f$ is the cuspidal representation of $\GL_2(\A)$ 
associated to a newform $f\in S_k(\SL_2(\Z))$ $($resp. $f\in S_{k-2}(\SL_2(\Z)))$ while 
$\pi_g$ is the cuspidal representation of $\SL_2(\A)$ 
associated to a newform $g\in S_{2k-4}(\SL_2(\Z))$  $($resp. $g\in S_{2k-2}(\SL_2(\Z)))$.    
\end{theorem}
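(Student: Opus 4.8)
The plan is to organize both directions around the global Arthur parameter $\psi$ attached to $\Pi_F$ and to exploit the rigidity forced by holomorphy and by level one. For the forward direction, suppose $F$ is CAP of level one and scalar weight $k\ge 4$. By Proposition \ref{cap}, a CAP representation has non-semisimple parameter, so writing $\psi=\pi_1[e_1]\boxplus\cdots\boxplus\pi_r[e_r]$ we have $e_i\ge 2$ for some $i$. The decisive archimedean input is that $\Pi_\infty$ is a holomorphic discrete series of lowest weight $\mathbf k=(k,k,k)$; by Atobe's description (\cite[Proposition 3.2]{Atobe}) this pins $\psi_\infty$ to the shape $\big(\bigoplus_{i=1}^t\rho_{\alpha_i}\otimes S_{d_i}\big)\oplus\sgn$ with strictly decreasing $\alpha_i$, parities $\alpha_i+d_i\equiv 1\ (\mathrm{mod}\ 2)$, $\sum_i d_i=3$, and exponents filling $\{k-1,k-2,k-3\}$. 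First I would enumerate $t$: since some $d_i>1$, only $t=1,2$ occur, and $t=1$ forces $\psi=\pi[3]\oplus\sgn$ with $\pi$ an orthogonal cuspidal $\GL_2$-representation, incompatible with level one. For $t=2$ exactly the two exponent patterns $(d_1,d_2)=(1,2)$ and $(2,1)$ survive, determining $\alpha_1,\alpha_2$ in terms of $k$.

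The next step is to lift each admissible $\psi_\infty$ to a global $\psi$ using the combinatorial conditions A-(1)--(6) together with the everywhere-unramified, regular-algebraic, self-dual constraints collected in Remark \ref{pAp}. These bound $r\le 3$ and successively eliminate the configurations $1\boxplus 1[3]\boxplus\pi$ (whose $\psi_\infty$ cannot contain the trivial character), $1[7]$, and $\pi_3\boxplus 1[3]$ (no $1\otimes S_3$ summand), leaving only $\pi_1[2]\boxplus\pi_2$ with $\pi_2$ a self-dual orthogonal cuspidal representation of $\GL_3(\A)$. Here I would invoke Ramakrishnan (\cite{Ra}) to write $\pi_2={\rm Sym}^2\sigma$ for a cuspidal $\sigma$ on $\GL_2(\A)$; matching the archimedean exponents $\alpha_i$ to the infinitesimal characters then identifies $\sigma$ and $\pi_1$ with the automorphic representations of specific elliptic Hecke eigenforms, giving exactly the parameters \eqref{typeI} and \eqref{typeII}, i.e. Miyawaki lifts of type I and II of the asserted weights.

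For the converse I would start from either Miyawaki parameter and verify it is a genuine discrete non-semisimple A-parameter, so that its global packet contains a cuspidal member realized by a holomorphic level-one form. The Arthur $\SL_2$-factor $\pi[2]$ forces non-temperedness at almost every place: at an unramified $p$, $\Pi_p$ is the unramified Langlands subquotient of the representation induced from the maximal parabolic $P=MN$ with $M\simeq\GL_2\times\SL_2$, the $\GL_2$-datum carrying the half-integral twist $|\det|^{1/2}$ that comes precisely from the factor $S_2$. A place-by-place Satake-parameter comparison then exhibits $\Pi_F$ as nearly equivalent to ${\rm Ind}_P^G\,\pi_f|\det|^{1/2}\otimes\pi_g$, where $\pi_f$ and $\pi_g$ are the $\GL_2$- and $\SL_2$-representations cut out by the two elliptic eigenforms, whose weights are read off from the $\alpha_i$. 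By Definition \ref{cap} this is precisely the statement that the Miyawaki lifts are CAP, establishing the ``More precisely'' clause simultaneously.

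The main obstacle I anticipate is this converse near-equivalence. The forward bookkeeping, while intricate, is essentially mechanical once Atobe's archimedean lemma and the level-one unramifiedness are in hand. The delicate point is translating the abstract Arthur-$\SL_2$ datum $\pi[2]$ into the concrete parabolic induction: fixing the correct normalization $|\det|^{1/2}$, identifying the Levi as $\GL_2\times\SL_2$ rather than as a Siegel Levi, matching Satake parameters at every unramified place, and checking that the relevant member of the A-packet is genuinely cuspidal and captured by a holomorphic form of level one, for which I would lean on the level-one irreducibility result of \cite{NPS} quoted above.
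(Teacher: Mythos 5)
Your proposal follows essentially the same route as the paper: non-semisimplicity of the parameter via Proposition \ref{cap}, Atobe's archimedean constraint to pin down $\psi_\infty$, the level-one/unramifiedness bookkeeping of Remark \ref{pAp} to eliminate all global parameters except $\pi[2]\boxplus{\rm Sym}^2\sigma$ (via Ramakrishnan for the $\GL_3$ factor), and a partial standard $L$-function (Satake parameter) comparison for the near-equivalence with the parabolic induction. This matches the paper's argument in structure and in every key input.
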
 

\subsection{Endoscopic representations of $Sp_6$} 
Let $F$ be a Hecke eigen endoscopic holomorhic Siegel modular form on $\Sp_6(\A)$ which is of level one and of scalar weight $k\ge 4$. Let $\psi=\ds\boxplus_{i=1}^r\pi_i$ be 
the global A-parameter such that $\Pi_F$ belongs to $\Pi_\psi$. Since $F$ is of level one, 
by a similar argument in the previous subsection, 
the only possible parameter is of form $\psi=\sigma_4\boxplus \sigma_3$ where 
$\sigma_n$ is an orthogonal cuspidal representation of $\GL_n(\A_\Q)$. Since $\sigma_4$ is unramified at all finite places, the central character is trivial, and hence it is a transfer from a cuspidal representation of $SO(2,2)\simeq (SL_2\times SL_2)/\mu_2$. Hence by \cite{Ra0}, $\sigma_4=\pi_{k_1}\boxtimes \pi_{k_2}$, where $k_1,k_2\in \Z_{>0},\ k_1>k_2$, and 
$\pi_{k_i}$ is a cuspidal representation of $GL_2$ associated to elliptic newform of level 1 and of weight $k_i+1$. 
As in the previous subsection, $\sigma_3={\rm Sym}^2 \pi_{k_3}$, where $\pi_{k_3}$ is a cuspidal representation of $GL_2$ associated to elliptic newform of level 1 and of weight $k_3+1$. 
Therefore, we can rewrite $\psi$ as $\psi=(\pi_{k_1}\boxtimes \pi_{k_2})\boxplus {\rm Sym^2}\pi_{k_3}.$
Then, $\psi_\infty=\rho_{k_1+k_2}\oplus \rho_{k_1-k_2}\oplus 
\rho_{2k_3}\oplus \sgn$. 
Applying \cite[Proposition 3.2]{Atobe} for $r=3$ and $d_1=d_2=d_3=1$, we have 
$$\Big\{\frac{k_1+k_2}{2},\ \frac{k_1-k_2}{2},\ k_3\Big\}=\{k-1,k-2,k-3\}.$$
It follows $k_2\le 2$. Since there is no elliptic cusp form of weight $k_2+1$ and of level 1, 
we have a contradiction. Thus, there are no holomorphic endoscopic forms of level 1 on $\Sp_6(\A)$. 

In conclusion, we have proved the following:
\begin{theorem}\label{AllHecke}
Let $F$ be a Hecke eigen holomorhic Siegel modular form on $\Sp_6(\A)$ which is of level one and of 
scalar weight $k\ge 4$. Then, $F$ is either a Miyawaki lift of type I,II $($a CAP form$)$, or a genuine form.    
\end{theorem}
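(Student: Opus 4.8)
The plan is to organize the argument entirely around the global Arthur parameter $\psi$ attached to $\Pi_F$, exploiting that parameters come in exactly three mutually exclusive types. First I would record the setup: by the referee's remark (via \cite[Corollary 3.4]{NPS}) the level-one hypothesis guarantees that $F$ generates an irreducible automorphic representation $\Pi_F$, and since $k\ge 4$ the archimedean component $\Pi_{F,\infty}$ is a discrete series, so there is a unique global A-parameter $\psi=\pi_1[e_1]\boxplus\cdots\boxplus\pi_r[e_r]$ with $\Pi_F\in\Pi_\psi$, pinned down by $L^S(s,\Pi_F,{\rm St})$. Every such $\psi$ is exactly one of: simple ($r=1$, $e_1=1$); semisimple with $r>1$ (all $e_i=1$); or non-semisimple (some $e_i>1$). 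These three cases are exhaustive and disjoint, so it suffices to treat them in turn.

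In the simple case there is nothing to prove: by definition $\Pi_F$, hence $F$, is a genuine form. In the semisimple case with $r>1$, $F$ is by definition endoscopic, and here I would appeal to the analysis of the preceding subsection: localizing $\psi$ at $\infty$ and applying \cite[Proposition 3.2]{Atobe} forces $\psi=(\pi_{k_1}\boxtimes\pi_{k_2})\boxplus{\rm Sym}^2\pi_{k_3}$ with $\{\tfrac{k_1+k_2}{2},\tfrac{k_1-k_2}{2},k_3\}=\{k-1,k-2,k-3\}$, whence $k_2\le 2$; since there is no level-one elliptic cusp form of weight $k_2+1$, this case cannot occur. Thus the endoscopic branch is empty.

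It remains to treat the non-semisimple case, and here I would invoke the classification already carried out in the CAP subsection: using \cite[Proposition 3.2]{Atobe} to determine the admissible shapes of $\psi_\infty$ and then conditions A-(1) through A-(6) to enumerate the compatible $\psi$, one finds that at level one the only surviving parameters are \eqref{typeI} and \eqref{typeII}, i.e. $F$ is a Miyawaki lift of type I or of type II. By Theorem \ref{CAPMiyawaki} these are precisely the CAP forms. Combining the three branches yields the claim: $F$ is a genuine form, or a Miyawaki lift of type I or II (a CAP form).

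The place that needs care — rather than a genuine obstacle, since the substantive work already lives in Theorem \ref{CAPMiyawaki} and the endoscopic subsection — is ensuring exhaustiveness and closing off the endoscopic branch. Concretely, everything hinges on two level-one inputs being used to full effect: the archimedean constraint from \cite[Proposition 3.2]{Atobe}, which rigidly prescribes the admissible shapes of $\psi_\infty$ through the set $\{k-1,k-2,k-3\}$, and the vanishing of $S_{\ell}(\SL_2(\Z))$ for small $\ell$, which both kills the endoscopic parameter and trims the non-semisimple list down to exactly the two Miyawaki shapes.
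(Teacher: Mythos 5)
Your proposal is correct and follows essentially the same route as the paper: the trichotomy simple / semisimple with $r>1$ / non-semisimple on the Arthur parameter $\psi$, with the genuine case holding by definition, the endoscopic branch emptied via \cite[Proposition 3.2]{Atobe} and the vanishing of $S_{k_2+1}(\SL_2(\Z))$ for $k_2\le 2$, and the non-semisimple branch reduced to the two Miyawaki shapes by the CAP subsection (Theorem \ref{CAPMiyawaki}). No gaps; this matches the paper's concluding synthesis of its two subsections.
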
 

\noindent{\bf Data Availability Statement} Data will be made available upon reasonable request.

\noindent{\bf Conflict of interest} We have no conflicts of interest to disclose.

\end{document}